\newtheorem{theorem}{Theorem}[section]
\newtheorem{lemma}[theorem]{Lemma}
\newtheorem{proposition}[theorem]{Proposition}
\newtheorem{definition}[theorem]{Definition}
\newtheorem{remark}[theorem]{Remark}
\begin{document}
\title{On the lower bound for packing densities of superballs in high dimensions}

\author{Chengfei Xie\thanks{C. Xie is with the School of Mathematical Sciences, Capital Normal University, Beijing 100048, China (email: cfxie@cnu.edu.cn).}
~and Gennian Ge\thanks{Corresponding author. G. Ge is with the School of Mathematical Sciences, Capital Normal University, Beijing 100048, China (e-mail: gnge@zju.edu.cn). The research of G. Ge is supported by the National Key Research and Development Program of China under Grant Nos. 2020YFA0712100  and  2018YFA0704703, National Natural Science Foundation of China under Grant No. 11971325, and Beijing Scholars Program.}}

\maketitle

\begin{abstract}
Define the superball with radius $r$ and center $\bm{0}$ in $\mathbb{R}^n$ to be the set
$$
\left\{\bm{x}\in\mathbb{R}^n:\sum_{j=1}^{m}\left(x_{k_j+1}^2+x_{k_j+2}^2+\cdots+x_{k_{j+1}}^2\right)^{p/2}\leq r^p\right\},0=k_1<k_2<\cdots<k_{m+1}=n,
$$
which is a generalization of $\ell_p$-balls. We give two new proofs for the celebrated result that for $1<p\leq2$, the translative packing density of superballs in $\mathbb{R}^n$ is $\Omega(n/2^n)$. This bound was first obtained by Schmidt, with subsequent constant factor improvement by Rogers and Schmidt, respectively. Our first proof is based on the hard superball model, and the second proof is based on the independence number of a graph. We also investigate the entropy of packings, which measures how plentiful such packings are.
\smallskip
\end{abstract}
\medskip

\noindent {{\it Keywords\/}: superball packing, hard superball model, uniform convexity, independence number}

\smallskip

\noindent {{\it AMS subject classifications\/}: 52C17, 05C15, 05C69}

\section{Introduction}
The sphere packing problem asks the densest packing of nonoverlapping equal-sized balls in $\mathbb{R}^n$. This is an old and difficult problem in discrete geometry. The exact answer is only known in dimensions $1, 2, 3, 8$, and $24$. In dimension $1$, the problem is trivial; in dimension $2$, the problem is nontrivial and can be solved by elementary geometry; in dimension $3$, the problem is known as the Kepler's conjecture and was solved by Hales \cite{MR2179728}; in dimension $8$, the problem was solved by Viazovska \cite{MR3664816} by finding a function that matches the Cohn-Elkies linear programming bound \cite{MR1973059}; in dimension $24$, the problem was solved by Cohn et al$.$ \cite{MR3664817}.

Let $\Delta_{2}(n)$ be the maximum translative packing density of equal-sized Euclidean balls in $\mathbb{R}^n$. The best upper bound of $\Delta_{2}(n)$ in high dimension is due to Kabatjanski\u{\i} and Leven\v{s}te\u{\i}n \cite{MR0514023}: $\Delta_{2}(n)\leq2^{(-0.599\ldots +o(1))n}$. See also Cohn and Zhao \cite{MR3229046} and Sardari and Zargar \cite{2020arXiv200100185S} for constant factor improvement. A lower bound $\Delta_{2}(n)\geq2^{-n}$ is trivial since doubling the radii of balls will cover the whole space. Rogers \cite{MR22863} improved the bound by a factor $n$. Ball \cite{MR1191572} constructed lattice packings of density $2(n-1)2^{-n}\zeta(n)$. The best known lower bound is $(65963+o(1))n2^{-n}$, due to Venkatesh \cite{MR3044452}.

In this paper, we consider the packing density of superballs. Let $k\in\mathbb{N}$ and $p\geq1$ be a real number. Let $\|\cdot\|_{2}$ be the $\ell_2$ norm; that is, for $(x_1, x_2, \ldots, x_k)\in\mathbb{R}^k$, $\|(x_1, x_2, \ldots, x_k)\|_{2}=\sqrt{x_1^2+x_2^2+\cdots+x_k^2}$. Let $\bm{k}=(k_1, k_2, \ldots, k_{m+1})$ be such that $0=k_1<k_2<\cdots<k_{m+1}=n$. Denote the \textit{superball} with radius $r$ and center $\bm{x}$ in $\mathbb{R}^n$ by
\begin{equation*}
B^n_{p, \bm{k}}(\bm{x}, r)=\left\{\bm{y}\in\mathbb{R}^n:\left(\sum_{j=1}^{m}\left\|\left(x_{k_j+1}-y_{k_j+1}, x_{k_j+2}-y_{k_j+2}, \ldots, x_{k_{j+1}}-y_{k_{j+1}}\right)\right\|_{2}^p\right)^{1/p}\leq r\right\},
\end{equation*}
where $\bm{x}=(x_1, x_2, \ldots, x_n)$ and $\bm{y}=(y_1, y_2, \ldots, y_n)$. We simply write $B^n_{p, \bm{k}}(r)=B^n_{p, \bm{k}}(\bm{0}, r)$ if the superball is centered at $\bm{0}$. Here, the role of $\bm{k}$ is to cut the vectors in $\mathbb{R}^n$; that is, $\bm{k}$ cuts $\bm{x}$ and $\bm{y}$ in $\mathbb{R}^n$ into shorter vectors $\bm{x}_j:=\left(x_{k_j+1}, x_{k_j+2}, \ldots, x_{k_{j+1}}\right)$ and $\bm{y}_j:=\left(y_{k_j+1}, y_{k_j+2}, \ldots, y_{k_{j+1}}\right)$, respectively. We first calculate the $\ell_2$-distance between $\bm{x}_j$ and $\bm{y}_j$. And then we calculate the $\ell_p$ norm of the new vector formed by these $\|\bm{x}_j-\bm{y}_j\|_2$.

Throughout the paper, we assume the superballs of our packings have volume $1$ and denote by $r_{p, \bm{k}, n}$ the radius of a superball of volume $1$. Let $\Delta_{p, \bm{k}}(n)$ denote the maximum translative packing density of copies of superballs of volume $1$; that is,
$$
\Delta_{p, \bm{k}}(n)=\limsup_{R\rightarrow\infty}\sup_{\mathcal{P}}\frac{\textrm{vol}(\mathcal{P}\cap B_{p, \bm{k}}^n(R))}{\textrm{vol}(B_{p, \bm{k}}^n(R))},
$$
where $\textrm{vol}(\mathcal{P}\cap B_{p, \bm{k}}^n(R))$ is the volume of $B_{p, \bm{k}}^n(R)$ covered by superballs of volume $1$ with centers in $\mathcal{P}$, and the supremum is over all $\mathcal{P}\subseteq\mathbb{R}^n$ such that the superballs with centers in $\mathcal{P}$ are nonoverlapping.

Consider a special case that $\bm{k}=\bm{k}_n:=(0, 1, 2, \ldots, n)$. In this case,
\begin{equation*}
B^n_{p, \bm{k}_n}(\bm{x}, r)=\left\{\bm{y}\in\mathbb{R}^n:\left(\sum_{j=1}^{n}\left|x_{j}-y_{j}\right|^p\right)^{1/p}\leq r\right\}
\end{equation*}
is the $\ell_p$-ball in $\mathbb{R}^n$. Let $\Delta_{p}(n):=\Delta_{p, \bm{k}_n}(n)$ be the packing density of $\ell_p$-balls. The upper bound for $\Delta_{p}(n)$ was first proved by van der Corput and Schaake \cite{van1936anwendung}, giving $\Delta_{p}(n)\leq\frac{1+n/p}{2^{n/p}}$ if $p\geq2$ and $\Delta_{p}(n)\leq\frac{1+(1-1/p)n}{2^{n/p}}$ if $1\leq p\leq2$. For $p\geq1.494\ldots$, the upper bound was improved exponentially by Sah et al$.$ \cite{MR4064778}. The Minkowski-Hlawka theorem \cite{MR9782} gives a lower bound that $\Delta_{p, \bm{k}}(n)\geq\zeta(n)2^{-n+1}$, where $\zeta(n)$ is the Riemann zeta function. Rush and Sloane \cite{MR908835} improved the Minkowski-Hlawka bound for $\ell_p$-balls and integers $p\geq3$, for instance, $\Delta_{3}(n)\geq2^{-0.8226\ldots n+o(n)}$. Rush \cite{MR1022304} constructed lattice packings with density $2^{-n+o(n)}$ for every convex body which is symmetric through each of the coordinate hyperplanes. Moreover, Elkies et al$.$ \cite{MR1117154} improved the Minkowski-Hlawka bound exponentially for superballs and reals $p>2$, and they also obtained lower bound for the packing density of more general bodies. For the lower bound constructed by error correcting codes, see Rush \cite{MR1147836, MR1307294, MR1375676} and Liu and Xing \cite{MR2384226}.

We focus on the lower bound in the case that $1<p\leq2$. In this case, only subexponential improvements of the Minkowski-Hlawka bound are known. Rogers \cite{MR79045} obtained a lower bound of $\Omega(\sqrt n/2^n)$. Schmidt \cite{MR96638} obtained a lower bound of $\Omega(n/2^n)$. See also Rogers \cite{MR96639} and Schmidt \cite{MR114809} for constant factor improvements. The best result is due to Schmidt \cite{MR146149}, where the constant factor is less than $\log \sqrt2\approx0.346$. We give new proofs for the lower bound $\Omega(n/2^n)$ of the packing density of $\ell_p$-balls.
\begin{theorem}\label{lpqiu}
For every $1<p\leq2$, there exists a constant $c_p\in(0, 2)$ such that
$$
\Delta_{p}(n)\geq(1+o_n(1))\frac{\log(2/c_{p})\cdot n}{2^n}.
$$
\end{theorem}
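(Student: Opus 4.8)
The plan is to realise the packing as the typical configuration of a Gibbs point process and to control its density through an occupancy-fraction computation fed by the uniform convexity of $\ell^n_p$. Since $\Delta_p(n)=\Delta_{p,\bm{k}_n}(n)$ and a packing of unit-volume $\ell_p$-balls is exactly a set of centres pairwise at $\ell_p$-distance $\ge 2r$ (with $r=r_{p,\bm{k}_n,n}$), it suffices to produce, for each large $L$, a translation-invariant simple point process $\mathbf X$ on the torus $\mathbb T^n_L$ whose points are pairwise at $\ell_p$-distance $\ge 2r$ and whose intensity $\rho$ satisfies $\rho\ge(1+o_n(1))\log(2/c_p)\,n\,2^{-n}$; averaging over translates and letting $L\to\infty$ produces periodic packings of that density. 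I would take $\mathbf X=\mathbf X_\lambda$ to be the hard superball model at fugacity $\lambda$, i.e.\ the process on $\mathbb T^n_L$ with partition function $Z_L(\lambda)=\sum_{k\ge0}\tfrac{\lambda^k}{k!}\int\mathbf 1[\,B^n_{p,\bm{k}_n}(x_i,r)\text{ pairwise non-overlapping}\,]\,d\bm x$ and intensity $\rho_L(\lambda)=\mathrm{vol}(\mathbb T^n_L)^{-1}\lambda\,\partial_\lambda\log Z_L(\lambda)$; the task reduces to choosing $\lambda$ so that $\rho(\lambda):=\lim_{L\to\infty}\rho_L(\lambda)$ is large.

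The engine is the Georgii--Nguyen--Zessin identity $\rho(\lambda)=\lambda\,q(\lambda)$, where $q(\lambda)$ is the expected fraction of space at which an extra superball can be inserted into $\mathbf X_\lambda$. Writing $K=B^n_{p,\bm{k}_n}(r)$ (so $\mathrm{vol}(K)=1$), insertability at a point is exactly the event that it lies in none of the doubled balls $x+2K=x+B^n_{p,\bm{k}_n}(2r)$, $x\in\mathbf X_\lambda$, so $q(\lambda)$ equals the expected vacant fraction of $\bigcup_{x\in\mathbf X_\lambda}(x+2K)$. The crude bound $q(\lambda)\ge(1-2^n\rho(\lambda))_+$ only reproduces the Minkowski--Hlawka density $2^{-n}$; the extra factor $n$ must come from a sharp lower bound on $q(\lambda)$ exploiting how heavily the doubled balls overlap. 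This is where $1<p\le2$ enters, via Clarkson's inequality: if $x\in K\cap(2v+K)$ then, with $p'=p/(p-1)$, $\|x-v\|^{p'}_p+\|v\|^{p'}_p\le r^{p'}$, whence
\[
\mathrm{vol}\bigl(K\cap(2v+K)\bigr)\ \le\ \Bigl(1-\bigl(\|v\|_p/r\bigr)_+^{\,p'}\Bigr)^{n/p'}.
\]
Thus two superballs whose centres are at $\ell_p$-distance bounded away from $0$ overlap only in an exponentially small fraction of their volume, and the total expected overlap of the family $\{x+2K\}$ is governed by the Laplace integral $n\int_0^1(1-t^{p'})^{n/p'}t^{n-1}\,dt$, whose integrand is maximised at $t=2^{-1/p'}$, i.e.\ at centre-distance $c_pr$ with $c_p=2^{1/p}\in(0,2)$. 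So $c_pr$ is the critical separation; since $\log(2/c_p)=(1-\tfrac1p)\log2=\tfrac1n\log\bigl(\mathrm{vol}(2K)/\mathrm{vol}(c_pK)\bigr)$, this $c_p$ matches the claimed constant, and for $p=2$ it is the familiar $\log\sqrt2$.

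Plugging the displayed estimate into the occupancy-fraction machinery for hard-core models yields a lower bound $\rho(\lambda)\ge G_n(\lambda)$ for an explicit function whose supremum over $\lambda$ is $(1+o_n(1))\log(2/c_p)\,n\,2^{-n}$, attained for $\lambda$ of order $n\,2^n$: deep in the dense regime, where the naive vacancy bound is worthless but the true vacant fraction is exponentially larger than naive precisely because the doubled balls overlap so much. Optimising over $\lambda$, passing to the thermodynamic limit by standard arguments to kill boundary effects, and absorbing the polynomial-in-$n$ factors from the Laplace integral into the $(1+o_n(1))$, gives the theorem. The same bound can be reached along the independence-number route: discretise, take the graph on a fine lattice joining centres of overlapping superballs, and prove a Shearer-type lower bound on its independence number in which the loss relative to the triangle-free case is controlled by the local edge density of the graph; that density, bounded by the same Clarkson estimate, makes the graph behave — for the purposes of the independence number — like a triangle-free graph of effective degree $\mathrm{vol}(2K)/\mathrm{vol}(c_pK)=(2/c_p)^n$, which is exactly what produces the factor $\log(2/c_p)$.

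I expect the main obstacle to be precisely the passage just sketched: turning the pointwise overlap bound into a genuine lower bound on $q(\lambda)$ valid at the relevant large fugacity $\lambda\sim n2^n$, where the Mayer/cluster expansion need not converge and cannot simply be truncated. The two viable routes are (i) an entropy/convexity comparison — lower-bound $\log Z_L(\lambda)$ by testing it against an explicit trial point process built from the critical-separation geometry and using convexity of $\lambda\mapsto\log Z_L(\lambda)$ — and (ii) the graph route, where the delicate point is establishing the Shearer-type independence bound for graphs that are only locally sparse in the averaged, quantitative sense the overlap estimate supplies (rather than genuinely triangle-free). In either case the real work is to make the constant come out to $\log(2/c_p)$ exactly, not a lossy multiple of it, and to verify that uniform convexity ($p>1$) is the only ingredient used — equivalently that the degeneration $c_p\to2$ as $p\to1^+$ is the sole obstruction.
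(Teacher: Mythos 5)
Your plan mirrors the paper's own two proofs at the level of strategy (the hard superball model with insertion-probability/occupancy bounds is Section \ref{diyizhong}; the discretized graph with a local-sparsity independence bound is Section \ref{dierzhong}, where the ``Shearer-type'' statement you invoke is supplied by the Hurley--Pirot chromatic number theorem), and Clarkson-type uniform convexity is indeed the right geometric engine; your displayed inequality is a correct instance of it. The genuine gap is exactly the step you flag yourself: passing from a pairwise-overlap bound to a lower bound on the vacancy fraction $q(\lambda)$ at the relevant fugacity, or to local sparsity of the graph. Your estimate controls $\mathrm{vol}\bigl(K\cap(2\bm{v}+K)\bigr)$, the overlap of two radius-$r$ balls as a function of their centre distance, and this degenerates to the useless bound $\approx 2^n$ precisely for the problematic pairs at small separation; no Mayer/inclusion--exclusion truncation is available in the regime needed. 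What the machinery actually requires --- and what the paper proves as Lemma \ref{zhongyao} --- is that for every measurable $S\subseteq B^n_{p,\bm{k}}(2r_{p,\bm{k},n})$ and $\bm{u}$ uniform in $S$, one has $\mathbb{E}\left[\mathrm{vol}\bigl(B^n_{p,\bm{k}}(\bm{u},2r_{p,\bm{k},n})\cap S\bigr)\right]\le 2c_p^n$. Its proof orders each pair by norm and shows, via uniform convexity applied to the normalized vectors $(\bm{x}-\bm{u})/a$ and $\bm{x}/b$, the containment $B^n_{p,\bm{k}}(\bm{u},2r)\cap B^n_{p,\bm{k}}(\bm{0},\|\bm{u}\|)\subseteq B^n_{p,\bm{k}}(\bm{u}/2,c'_p r)$; the asymmetric second ball is what neutralizes the small-separation pairs that your equal-ball bound cannot handle. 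This one lemma then drives both routes: through the spatial Markov property and free-volume identities (Lemmas \ref{fvs}--\ref{xiajie3}) with $\lambda=n^{-1}c_p^{-n}$ and a Lambert-$W$ optimization in Theorem \ref{main}, and through the neighborhood average-degree bound of Lemma \ref{pingjundu} feeding the chromatic bound. Without an argument of this type, your proposal stops at its decisive step.

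Secondarily, the quantitative claims attached to your heuristic are not what this machinery delivers: the occupancy/local-sparsity method does not produce $c_p=2^{1/p}$ (which would give $\log\sqrt2$ at $p=2$ --- that is Schmidt's constant, obtained by an entirely different argument); at $p=2$ it yields the analogue of $\log(2/\sqrt3)$, and the paper's $c_p$ is defined implicitly through $x_p$ and $\delta_p$. Likewise the optimal fugacity is exponentially small, $\lambda\asymp n^{-1}c_p^{-n}$, not of order $n2^n$. Since the theorem only asserts the existence of some $c_p\in(0,2)$, these slips do not threaten the statement itself, but they indicate that the pipeline from your overlap estimate to the claimed density bound was asserted rather than carried out.
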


We also obtain a lower bound for $\Delta_{p, \bm{k}}(n)$.

\begin{theorem}\label{zhuyaojieguo}
For every $p\in(1, 2]$, there exists a constant $c_{p}\in(0, 2)$ such that the following holds. For every $\bm{k}=(k_1, k_2, \ldots, k_{m+1})$ with $k_j\in\mathbb{N}\cup\{0\}$ and $0=k_1<k_2<\cdots<k_{m+1}=n$, we have
$$
\Delta_{p, \bm{k}}(n)\geq(1+o_n(1))\frac{\log(2/c_{p})\cdot n}{2^n}.
$$
\end{theorem}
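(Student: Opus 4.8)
The plan is to derive Theorem~\ref{zhuyaojieguo} from the proof of Theorem~\ref{lpqiu} by isolating the only feature of the $\ell_p$-ball that the latter argument exploits --- uniform convexity with a modulus controlled by $p$ --- and checking that the superball enjoys the same property with a modulus that does not depend on $\bm k$ or $n$. Let $N_{\bm k}(\bm x)=\big(\sum_{j=1}^{m}\|\bm x_j\|_2^{p}\big)^{1/p}$ denote the norm whose unit ball is $B^n_{p,\bm k}(1)$, where $\bm x_j$ is the $j$-th block of $\bm x$; since translative packing density depends only on the shape of the body, it suffices to reason about $N_{\bm k}$. The key structural observation is that $(\mathbb R^n,N_{\bm k})$ is exactly the $\ell_p$-direct sum $\ell_2^{d_1}\oplus_p\cdots\oplus_p\ell_2^{d_m}$ with $d_j=k_{j+1}-k_j$, and that the $\ell_p$-ball is the special member $d_1=\cdots=d_m=1$ of this family, so that Theorem~\ref{zhuyaojieguo} should be \emph{the same theorem} applied to a slightly larger class of uniformly convex bodies.

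The first step is a convexity lemma: for every $1<p\le 2$ there is a constant $\gamma_p>0$, depending on $p$ only, such that every $\ell_p$-sum of Hilbert spaces is $2$-uniformly convex with constant $\gamma_p$, i.e.\ $\|\tfrac{\bm x+\bm y}{2}\|^{2}\le\tfrac12\|\bm x\|^{2}+\tfrac12\|\bm y\|^{2}-\gamma_p\|\bm x-\bm y\|^{2}$ for all $\bm x,\bm y$, with the \emph{same} $\gamma_p$ for all $m$ and all $d_1,\dots,d_m$. This is classical and short: the parallelogram identity gives this inequality with constant $\tfrac14$ inside each Euclidean block, and the per-block estimates are then amalgamated through the $\ell_p$-norm of the vector of block-norms by the scalar Clarkson inequality for $1<p\le 2$, which contributes a factor depending on $p$ but nothing that grows with the number or the sizes of the blocks. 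Rephrased, the modulus of convexity satisfies $\delta_{N_{\bm k}}(\varepsilon)\ge c\,\varepsilon^{2}$ with $c$ fixed by $\gamma_p$, hence is bounded below uniformly in $\bm k$ and $n$; moreover one may take the constant $c_p\in(0,2)$ in the statement to be the one already produced for Theorem~\ref{lpqiu}, since the configurations witnessing the modulus of $N_{\bm k}$ can be taken two-dimensional and supported on one direction from each of two blocks, on which $N_{\bm k}$ restricts to the planar $\ell_p$-norm, so $N_{\bm k}$ is at least as convex as the $\ell_p$-ball.

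The second step is simply to run the proof of Theorem~\ref{lpqiu} verbatim. Both arguments offered there --- the hard-superball model and the independence-number argument --- use the geometry of the body only through one quantitative consequence of uniform convexity: if $\bm u$ satisfies $N_{\bm k}(\bm u)\ge\rho r$ for a suitably chosen fixed $\rho<2$, then $B^n_{p,\bm k}(\bm 0,2r)\cap B^n_{p,\bm k}(\bm u,2r)$ lies in a translate of $B^n_{p,\bm k}(c_p r)$ for some $c_p<2$ --- apply the modulus of convexity to $\bm z$ and $\bm u-\bm z$ for an arbitrary $\bm z$ in the intersection --- so that, apart from an exponentially small fraction of $\bm u\in B^n_{p,\bm k}(\bm 0,2r)$, this intersection has volume at most $c_p^{\,n}$. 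Hence in the near-overlap graph (centres joined when the superballs overlap, built on a large torus with a suitably chosen point density) every vertex neighbourhood spans only an exponentially small fraction of its potential edges, with local-sparsity exponent $\log(2/c_p)$; feeding this into the independence-number bound for locally sparse graphs (respectively, into the cluster-expansion/correlation estimate for the hard-superball model) converts the Minkowski--Hlawka bound $\Theta(2^{-n})$ into $(1+o_n(1))\log(2/c_p)\,n\,2^{-n}$. Since $\gamma_p$, and hence every constant entering the computation, depends on $p$ only, all error terms are uniform in $\bm k$; verifying this uniformity --- so that a single function $o_n(1)$ serves all $\bm k$ with $k_{m+1}=n$ simultaneously --- is the one bit of bookkeeping that must be done with care.

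The step I expect to be the genuine obstacle is the convexity lemma, precisely the assertion that the modulus of convexity of an $\ell_p$-sum of Euclidean blocks does not degrade as the number of blocks or their dimensions grow. The robust route is the additive $2$-uniform convexity inequality above, which is immune to the number of blocks; a route through the exact modulus of $N_{\bm k}$ would need Hanner-type identities for these mixed norms and is substantially more delicate. A smaller but real point is that $p=1$ must be genuinely excluded, since an $\ell_1$-sum of Hilbert spaces is not uniformly convex; this is why the hypothesis is $1<p\le 2$, and it is also why $c_p\uparrow 2$, so that $\log(2/c_p)\downarrow 0$, as $p\downarrow 1$, gracefully matching the failure of the method at the endpoint.
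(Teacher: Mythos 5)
Your proposal follows essentially the same route as the paper: the paper proves uniform convexity of the mixed norm $\|\cdot\|_{p,\bm{k}}$ with modulus $\delta_p(\epsilon)=1-\left(1-(\epsilon/2)^q\right)^{1/q}$ independent of $\bm{k}$ and $n$ (via a block-wise Clarkson-type inequality amalgamated through the outer $\ell_p$ structure by Minkowski's inequality, which is precisely your ``robust route''), then uses it to show that the relevant intersection of two superballs of radius $2r_{p,\bm{k},n}$ has volume at most $c_p^n$, and feeds this into both the hard superball model and the locally-sparse independence-number argument. The only caveat is your side remark that the extremal configurations for the modulus can be taken two-dimensional with one direction from each of two blocks, which is unjustified as stated (two-dimensional subspaces of the sum need not have that form); it is also unnecessary, since the additive per-block amalgamation you propose is exactly what the paper carries out, and your handling of centers $\bm{u}$ with small norm needs only the trivial volume bound $\mathrm{vol}\left(B^n_{p,\bm{k}}(\bm{0},\rho r)\right)=\rho^n$ rather than the ``exponentially small fraction'' phrasing.
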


Observe that Theorem \ref{lpqiu} follows directly from Theorem \ref{zhuyaojieguo} by setting $\bm{k}=(0, 1, 2, \ldots, n)$.

The lower bound in Theorem \ref{zhuyaojieguo} is independent of $\bm{k}$. Note that different $\bm{k}$ will lead to different bodies in the packing. So this result means that, given $1<p\leq2$ and letting $n$ be large, for every $\bm{k}=(k_1, k_2, \ldots, k_{m+1})$ ($k_j\in\mathbb{N}\cup\{0\}$ and $0=k_1<k_2<\cdots<k_{m+1}=n$), the translative packing density of $B^n_{p, \bm{k}}(\bm{0}, r_{p, \bm{k}, n})$ is approximately larger than $\frac{\log(2/c_{p})n}{2^n}$. Throughout the paper, $p$ and $\bm{k}$ appear in the subscripts of most of our symbols and variables since they depend on $p$ and $\bm{k}$. However, we always assume that $p$ is a given number and our bounds are independent of $\bm{k}$ (Theorem \ref{zhuyaojieguo}, Theorem \ref{main}, Theorem \ref{gxiajie} and Theorem \ref{fxiajie}), so $p$ and $\bm{k}$ in the subscripts could be ignored for simplicity.

For $p>2$, we can obtain a lower bound of $\Omega_p(n/2^n)$ as well. But this bound is worse than the bound in \cite{MR1117154}.

We will give two proofs of Theorem \ref{zhuyaojieguo}. In Section \ref{diyizhong}, we give the first proof. The method we use is the hard superball model, which is developed from statistical physics. It was used by Jenssen et al$.$ to prove the lower bound of kissing number \cite{MR3836667} and packing density of Euclidean balls \cite{MR3898718}, where the methods are called hard cap model and hard sphere model, respectively. Recently, Fern{\'a}ndez et al$.$ \cite{2021arXiv211101255G} gave a constant factor improvement of results in \cite{MR3836667} and \cite{MR3898718}. In Section \ref{dierzhong}, we give the second proof, which uses the independence number of a graph. We also use the so-called uniform convexity to overcome the difficulty for the non-Euclideanness.

We also investigate the entropy density and pressure of packings, which we will define in Section \ref{entropy}. This measures how plentiful such packings are.
\section{Uniformly convex spaces}
Throughout the paper, we always assume that $\bm{k}$ is a (finite or infinite) sequence of strictly increasing nonnegative integers. Let $p\geq1$. For $\bm{k}=(k_1, k_2, \ldots, k_{m+1})$ such that $0=k_1<k_2<\cdots<k_{m+1}=n$  and $\bm{x}=(x_1, x_2, \ldots, x_n)\in\mathbb{R}^n$, define
$$
\|\bm{x}\|_{p, \bm{k}, n}=\left(\sum_{j=1}^{m}\left\|\left(x_{k_j+1}, x_{k_j+2}, \ldots, x_{k_{j+1}}\right)\right\|_{2}^p\right)^{1/p}=\left(\sum_{j=1}^{m}\|\bm{x}_j\|_{2}^p\right)^{1/p},
$$
where $\bm{x}_j=(x_{k_j+1}, x_{k_{j}+2}, \ldots, x_{k_{j+1}})$.
For $\bm{k}=(k_1, k_2, \ldots, k_{m+1}, \ldots)$ with $0=k_1<k_2<\cdots<k_{m+1}<\cdots$ (obviously $\lim_{j\rightarrow\infty}k_j=\infty$, since $k_j$'s are nonnegative integers) and $\bm{x}=(x_1, x_2, x_3, \ldots)$, define
$$
\|\bm{x}\|_{p, \bm{k}}=\left(\sum_{j=1}^{\infty}\left\|\left(x_{k_j+1}, x_{k_j+2}, \ldots, x_{k_{j+1}}\right)\right\|_{2}^p\right)^{1/p}=\left(\sum_{j=1}^{\infty}\|\bm{x}_j\|_{2}^p\right)^{1/p},
$$
where $\bm{x}_j=(x_{k_j+1}, x_{k_j+2}, \ldots, x_{k_{j+1}})$. Let $\ell_{p, \bm{k}}$ be the set consisting of all real sequences $\bm{x}$ satisfying $\|\bm{x}\|_{p, \bm{k}}<\infty$.

\begin{proposition}
For every $\bm{k}=(k_1, k_2, \ldots, k_{m+1}, \ldots)$ with $0=k_1<k_2<\cdots<k_{m+1}<\cdots$, and for every $p\geq1$, $\ell_{p, \bm{k}}$ is a normed linear space equipped with the norm $\|\cdot\|_{p, \bm{k}}$.
\end{proposition}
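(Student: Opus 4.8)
The plan is to reduce everything to two classical facts: the triangle inequality for the Euclidean norm $\|\cdot\|_2$ on each finite block $\mathbb{R}^{k_{j+1}-k_j}$, and Minkowski's inequality for the sequence space $\ell_p$. The bridge between them is the \emph{collapsing map} $\Phi$ that sends a real sequence $\bm{x}=(x_1,x_2,x_3,\ldots)$ to the nonnegative sequence $\Phi(\bm{x})=(\|\bm{x}_1\|_2,\|\bm{x}_2\|_2,\ldots)$, where $\bm{x}_j=(x_{k_j+1},x_{k_j+2},\ldots,x_{k_{j+1}})$. By definition $\|\bm{x}\|_{p,\bm{k}}=\|\Phi(\bm{x})\|_{\ell_p}$, so $\bm{x}\in\ell_{p,\bm{k}}$ exactly when $\Phi(\bm{x})\in\ell_p$, and most assertions transfer from $\ell_p$ to $\ell_{p,\bm{k}}$ through $\Phi$.

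First I would check that $\ell_{p,\bm{k}}$ is a linear subspace of the space of all real sequences. It contains $\bm{0}$, and it is closed under scalar multiplication because $\Phi(\lambda\bm{x})=|\lambda|\Phi(\bm{x})$. For closure under addition, the triangle inequality for $\|\cdot\|_2$ on the $j$-th block gives $\|\bm{x}_j+\bm{y}_j\|_2\le\|\bm{x}_j\|_2+\|\bm{y}_j\|_2$, i.e. $\Phi(\bm{x}+\bm{y})\le\Phi(\bm{x})+\Phi(\bm{y})$ coordinatewise; combined with the elementary bound $(a+b)^p\le 2^{p-1}(a^p+b^p)$ for $a,b\ge0$ (convexity of $t\mapsto t^p$ for $p\ge1$), this shows $\Phi(\bm{x}+\bm{y})\in\ell_p$ whenever $\Phi(\bm{x}),\Phi(\bm{y})\in\ell_p$, so $\bm{x}+\bm{y}\in\ell_{p,\bm{k}}$.

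Next come the norm axioms. Homogeneity $\|\lambda\bm{x}\|_{p,\bm{k}}=|\lambda|\,\|\bm{x}\|_{p,\bm{k}}$ is immediate from $\Phi(\lambda\bm{x})=|\lambda|\Phi(\bm{x})$. For positive definiteness, $\|\bm{x}\|_{p,\bm{k}}=0$ forces every $\|\bm{x}_j\|_2=0$, hence every $\bm{x}_j=\bm{0}$, hence $\bm{x}=\bm{0}$, and the converse is clear. For the triangle inequality, combine the coordinatewise bound $\Phi(\bm{x}+\bm{y})\le\Phi(\bm{x})+\Phi(\bm{y})$ with the monotonicity of $\|\cdot\|_{\ell_p}$ on nonnegative sequences and then Minkowski's inequality in $\ell_p$:
$$
\|\bm{x}+\bm{y}\|_{p,\bm{k}}=\|\Phi(\bm{x}+\bm{y})\|_{\ell_p}\le\|\Phi(\bm{x})+\Phi(\bm{y})\|_{\ell_p}\le\|\Phi(\bm{x})\|_{\ell_p}+\|\Phi(\bm{y})\|_{\ell_p}=\|\bm{x}\|_{p,\bm{k}}+\|\bm{y}\|_{p,\bm{k}}.
$$

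I do not expect a serious obstacle: this is a routine verification recorded for completeness. The one point that deserves a little care is making the block-wise reduction precise — checking that $\|\cdot\|_{p,\bm{k}}$ is literally $\|\Phi(\cdot)\|_{\ell_p}$ and that $\Phi$ is subadditive and positively homogeneous — after which the classical Minkowski inequality does all the remaining work. The identical argument with finite sums also shows that $\|\cdot\|_{p,\bm{k},n}$ is a norm on $\mathbb{R}^n$, which is what is implicitly used when speaking of the superballs $B^n_{p,\bm{k}}(\bm{x},r)$.
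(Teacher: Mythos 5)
Your proposal is correct and follows essentially the same route as the paper's proof: blockwise triangle inequality for $\|\cdot\|_2$ followed by Minkowski's inequality in $\ell_p$, together with the straightforward verification of homogeneity and positive definiteness. Your collapsing map $\Phi$ is just a convenient repackaging of the same computation (the paper's chain of inequalities (\ref{triangle}) is exactly $\|\Phi(\bm{x}+\bm{y})\|_{\ell_p}\le\|\Phi(\bm{x})+\Phi(\bm{y})\|_{\ell_p}\le\|\Phi(\bm{x})\|_{\ell_p}+\|\Phi(\bm{y})\|_{\ell_p}$), so no substantive difference.
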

\begin{proof}
For every $\bm{x}=(x_1, x_2, \ldots), \bm{y}=(y_1, y_2, \ldots)\in\ell_{p, \bm{k}}$ and $a\in\mathbb{R}$, let
$$
\bm{x}_j=(x_{k_j+1}, x_{k_j+2}, \ldots, x_{k_{j+1}})
$$
and
$$
\bm{y}_j=(y_{k_j+1}, y_{k_j+2}, \ldots, y_{k_{j+1}}).
$$
We have
\begin{equation}\label{triangle}
\begin{split}
\|\bm{x}+\bm{y}\|_{p, \bm{k}}=&\left(\sum_{j=1}^{\infty}\|\bm{x}_j+\bm{y}_j\|_{2}^p\right)^{1/p}\\
\leq&\left(\sum_{j=1}^{\infty}\left(\|\bm{x}_j\|_{2}+\|\bm{y}_j\|_{2}\right)^p\right)^{1/p}\\
\leq&\left(\sum_{j=1}^{\infty}\|\bm{x}_j\|_{2}^p\right)^{1/p}+\left(\sum_{j=1}^{\infty}\|\bm{y}_j\|_{2}^p\right)^{1/p}\\
=&\|\bm{x}\|_{p, \bm{k}}+\|\bm{y}\|_{p, \bm{k}}<\infty,
\end{split}
\end{equation}
where the first `$\leq$' follows from the triangle inequality of $\|\cdot\|_{2}$ and the second `$\leq$' follows from the Minkowski inequality
\begin{equation}\label{minkowski}
\left(\sum_{j=1}^{\infty}\left(A_j+B_j\right)^s\right)^{1/s}
\leq\left(\sum_{j=1}^{\infty}A_j^s\right)^{1/s}+\left(\sum_{j=1}^{\infty}B_j^s\right)^{1/s}
\end{equation}
for every nonnegative real sequences $(A_j)_{j\in\mathbb{N}}$ and $(B_j)_{j\in\mathbb{N}}$ and $s\geq1$ (if $0<s\leq1$, then inequality (\ref{minkowski}) is in the reverse sense).
And
\begin{equation}\label{linear}
\begin{split}
\|a\bm{x}\|_{p, \bm{k}}=&\left(\sum_{j=1}^{\infty}\|a\bm{x}_j\|_{2}^p\right)^{1/p}\\
=&\left(\sum_{j=1}^{\infty}a^p\|\bm{x}_j\|_{2}^p\right)^{1/p}\\
=&|a|\left(\sum_{j=1}^{\infty}\|\bm{x}_j\|_{2}^p\right)^{1/p}<\infty.
\end{split}
\end{equation}
So $\ell_{p, \bm{k}}$ is a linear space with the usual addition and scalar multiplication.

Clearly, $\|\bm{x}\|_{p, \bm{k}}\geq0$ for every $\bm{x}\in\ell_{p, \bm{k}}$, and if $\|\bm{x}\|_{p, \bm{k}}=0$, then $\bm{x}_j=\bm{0}$ for every $j$, i.e. $\bm{x}=\bm{0}$. We have already proved positive homogeneity (equation (\ref{linear})) and triangle inequality (inequality (\ref{triangle})). We are done.
\end{proof}

For similar reasons, $\|\bm{x}\|_{p, \bm{k}, n}$ is a norm on $\mathbb{R}^n$ as well. For $\bm{x}, \bm{y}\in\mathbb{R}^n$, let $d_{p, \bm{k}, n}(\bm{x}, \bm{y}):=\|\bm{y}-\bm{x}\|_{p, \bm{k}, n}$ be the \textit{$\ell_{p, \bm{k}}$-distance} between $\bm{x}$ and $\bm{y}$ in $\mathbb{R}^n$. For sequences $\bm{x}, \bm{y}\in\ell_{p, \bm{k}}$, let  $d_{p, \bm{k}}(\bm{x}, \bm{y}):=\|\bm{y}-\bm{x}\|_{p, \bm{k}}$ be the \textit{$\ell_{p, \bm{k}}$-distance} between $\bm{x}$ and $\bm{y}$ in $\ell_{p, \bm{k}}$.

\begin{definition}
A normed linear space equipped with norm $\|\cdot\|$ is said to be uniformly convex if to each $\epsilon\in(0, 2]$, there corresponds a $\delta(\epsilon)>0$ such that the conditions $\|x\|=\|y\|=1$ and $\|x-y\|\geq\epsilon$ imply $\left\|\frac{x+y}{2}\right\|\leq1-\delta(\epsilon)$.
\end{definition}

If we write $\bm{k}_\infty=(0, 1, 2, 3, \ldots)$, then the space $\ell_{p, \bm{k}_\infty}$ is the usual $\ell^p$ space. And we have the following theorem.

\begin{theorem}[\cite{MR1501880}]\label{uniformconvex}
For $p>1$, the $\ell_{p, \bm{k}_\infty}$ space is uniformly convex. If $1<p\leq2$ and $\delta_{p, \bm{k}_\infty}(\epsilon)$ is the constant with respect to the uniform convexity of $\ell_{p, \bm{k}_\infty}$ space, then we can choose $\delta_{p, \bm{k}_\infty}(\epsilon)=1-\left(1-\left(\frac{\epsilon}{2}\right)^q\right)^{1/q}$, where $q=p/(p-1)$ is the conjugate index.
\end{theorem}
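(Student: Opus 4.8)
The plan is to reduce the statement to Clarkson's inequalities. Since $\ell_{p,\bm{k}_\infty}$ is the classical sequence space $\ell^p$, write $q=p/(p-1)$. For $1<p\le 2$ the key tool is \emph{Clarkson's second inequality}, which we denote $(C_2)$:
\[
\left\|\frac{f+g}{2}\right\|_p^{q}+\left\|\frac{f-g}{2}\right\|_p^{q}\le\left(\frac{\|f\|_p^p+\|g\|_p^p}{2}\right)^{q/p}\qquad(f,g\in\ell^p).
\]
Granting $(C_2)$, the modulus of convexity drops out immediately: if $\|f\|_p=\|g\|_p=1$ the right side of $(C_2)$ equals $1$, so $\left\|\tfrac{f+g}{2}\right\|_p^{q}\le 1-\left\|\tfrac{f-g}{2}\right\|_p^{q}$; and if moreover $\|f-g\|_p\ge\epsilon$ then $\left\|\tfrac{f-g}{2}\right\|_p^{q}\ge(\epsilon/2)^q$, whence $\left\|\tfrac{f+g}{2}\right\|_p\le\bigl(1-(\epsilon/2)^q\bigr)^{1/q}=1-\delta_{p,\bm{k}_\infty}(\epsilon)$, which is strictly positive for every $\epsilon\in(0,2]$. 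For $p>2$ the same scheme applied to \emph{Clarkson's first inequality} $\left\|\tfrac{f+g}{2}\right\|_p^{p}+\left\|\tfrac{f-g}{2}\right\|_p^{p}\le\tfrac12\bigl(\|f\|_p^p+\|g\|_p^p\bigr)$ gives the admissible modulus $1-\bigl(1-(\epsilon/2)^p\bigr)^{1/p}>0$, establishing uniform convexity in that range as well.

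It remains to prove $(C_2)$, and here is where the real work lies. The analytic input is the scalar inequality $(\star)$: for all real $\alpha,\beta$,
\[
\left|\frac{\alpha+\beta}{2}\right|^{q}+\left|\frac{\alpha-\beta}{2}\right|^{q}\le\left(\frac{|\alpha|^p+|\beta|^p}{2}\right)^{q/p},
\]
which by positive homogeneity and the symmetries $\alpha\leftrightarrow\beta$, $\beta\leftrightarrow-\beta$ reduces to the one-variable statement $\bigl(\tfrac{1+t}{2}\bigr)^q+\bigl(\tfrac{1-t}{2}\bigr)^q\le\bigl(\tfrac{1+t^p}{2}\bigr)^{q/p}$ on $t\in[0,1]$; the two sides agree at $t=0$ and $t=1$, and I would fill in the interior by an elementary convexity (equivalently, binomial power-series) estimate. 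The subtlety is that $(\star)$ cannot simply be summed over the coordinates of $f=(f_i)$ and $g=(g_i)$, because in $(C_2)$ the exponent $q/p>1$ sits \emph{outside} the sum on the right. The maneuver that fixes this is: first raise $(\star)$ to the power $p/q\le 1$, so that with $c_i=\bigl|\tfrac{f_i+g_i}{2}\bigr|$, $d_i=\bigl|\tfrac{f_i-g_i}{2}\bigr|$ and $e_i=\tfrac{|f_i|^p+|g_i|^p}{2}$ it reads $(c_i^{q}+d_i^{q})^{p/q}\le e_i$, which does sum, yielding $\sum_i(c_i^{q}+d_i^{q})^{p/q}\le\tfrac12(\|f\|_p^p+\|g\|_p^p)$; then re-aggregate by Minkowski's inequality in the $\ell^{q/p}$ norm (legal since $q/p\ge 1$), i.e.\ view each $(c_i^{p},d_i^{p})$ as a point of $(\mathbb{R}^2,\|\cdot\|_{q/p})$ and apply the triangle inequality term by term to get
\[
\left(\Bigl(\sum_i c_i^{p}\Bigr)^{q/p}+\Bigl(\sum_i d_i^{p}\Bigr)^{q/p}\right)^{p/q}\le\sum_i\bigl(c_i^{q}+d_i^{q}\bigr)^{p/q}.
\]
Chaining the last two displays and raising to the power $q/p$ is exactly $(C_2)$, since $\bigl(\sum_i c_i^{p}\bigr)^{q/p}=\left\|\tfrac{f+g}{2}\right\|_p^{q}$ and likewise for the $d_i$.

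I expect the main obstacle to be, first, the scalar inequality $(\star)$ — elementary but not a one-liner — and, more conceptually, the recognition that the exponent mismatch in $(C_2)$ has to be bridged by the raise-to-$p/q$ then Minkowski-in-$\ell^{q/p}$ step rather than by naive termwise summation; this is precisely what makes the second Clarkson inequality harder than the first, where (for $p\ge 2$) the exponents match and summation is direct. A cleaner-looking but less self-contained alternative would be to obtain $(C_2)$ by complex interpolation (Riesz--Thorin) applied to the map $(f,g)\mapsto\bigl(\tfrac{f+g}{2},\tfrac{f-g}{2}\bigr)$ between the endpoints $p=1$ (handled by the triangle inequality) and $p=2$ (the parallelogram law); the only delicate point there is that one needs a mixed-norm / vector-valued form of the interpolation theorem, since the target is an $\ell^{q}(\ell^{p})$-type space.
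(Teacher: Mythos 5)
Your proposal is correct and takes essentially the same approach as the source the paper cites (Clarkson) and as the paper's own proof of the generalization in Proposition \ref{yizhitu}: your ``raise to the power $p/q$, then Minkowski in $\ell^{q/p}$'' aggregation is precisely the paper's application of the Minkowski inequality (\ref{minkowski}) in the reverse sense with $s=p/q\le 1$, and your scalar inequality $(\star)$ is, after homogenizing, the paper's inequality (\ref{budengshi7pie}), which the paper likewise defers to Clarkson rather than proving in full. The extraction of the modulus $\delta_{p,\bm{k}_\infty}(\epsilon)=1-\left(1-(\epsilon/2)^q\right)^{1/q}$ from Clarkson's second inequality coincides with the derivation in the paper's proof of Proposition \ref{yizhitu}.
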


We have the following generalization of Theorem \ref{uniformconvex}.

\begin{proposition}\label{yizhitu}
For every $\bm{k}=(k_1, k_2, \ldots, k_{m+1}, \ldots)$ with $0=k_1<k_2<\cdots<k_{m+1}<\cdots$, and for every $p>1$, $\ell_{p, \bm{k}}$ is uniformly convex. If $1<p\leq2$ and $\delta_{p}(\epsilon)$ is the constant with respect to the uniform convexity of $\ell_{p, \bm{k}}$ space, then we can choose $\delta_{p}(\epsilon)=1-\left(1-\left(\frac{\epsilon}{2}\right)^q\right)^{1/q}$, where $q=p/(p-1)$ is the conjugate index.
\end{proposition}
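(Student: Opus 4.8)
The plan is to deduce uniform convexity of $\ell_{p,\bm{k}}$ with the stated modulus from the second Clarkson inequality for the norm $\|\cdot\|_{p,\bm{k}}$: for $1<p\le 2$ and $q=p/(p-1)$,
$$
\|\bm{x}+\bm{y}\|_{p,\bm{k}}^{q}+\|\bm{x}-\bm{y}\|_{p,\bm{k}}^{q}\le 2\bigl(\|\bm{x}\|_{p,\bm{k}}^{p}+\|\bm{y}\|_{p,\bm{k}}^{p}\bigr)^{q/p}\qquad(\bm{x},\bm{y}\in\ell_{p,\bm{k}}).
$$
Once this is known, the statement follows immediately: if $\|\bm{x}\|_{p,\bm{k}}=\|\bm{y}\|_{p,\bm{k}}=1$ and $\|\bm{x}-\bm{y}\|_{p,\bm{k}}\ge\epsilon$, then $\|\bm{x}+\bm{y}\|_{p,\bm{k}}^{q}\le 2\cdot 2^{q/p}-\epsilon^{q}=2^{q}-\epsilon^{q}$, so $\bigl\|\tfrac{\bm{x}+\bm{y}}{2}\bigr\|_{p,\bm{k}}\le\bigl(1-(\epsilon/2)^{q}\bigr)^{1/q}=1-\delta_{p}(\epsilon)$. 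For $p>2$, where only uniform convexity (without an explicit modulus) is asserted, I would instead use the first Clarkson inequality $\|\bm{x}+\bm{y}\|_{p,\bm{k}}^{p}+\|\bm{x}-\bm{y}\|_{p,\bm{k}}^{p}\le 2^{p-1}\bigl(\|\bm{x}\|_{p,\bm{k}}^{p}+\|\bm{y}\|_{p,\bm{k}}^{p}\bigr)$, whose per-block form reduces to $u^{p/2}+v^{p/2}\le(u+v)^{p/2}$ together with the comparison $\|\cdot\|_{2}\le 2^{1/2-1/p}\|\cdot\|_{p}$ on $\mathbb{R}^{2}$.

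So the work lies in proving the displayed Clarkson inequality for $\|\cdot\|_{p,\bm{k}}$. Write $a_{j}=\|\bm{x}_{j}\|_{2}$, $b_{j}=\|\bm{y}_{j}\|_{2}$, $s_{j}=\|\bm{x}_{j}+\bm{y}_{j}\|_{2}$, $d_{j}=\|\bm{x}_{j}-\bm{y}_{j}\|_{2}$, so that $\|\bm{x}+\bm{y}\|_{p,\bm{k}}=\bigl(\sum_{j}s_{j}^{p}\bigr)^{1/p}$ and similarly for the other three norms. The first step is a \emph{per-block} estimate: $s_{j}^{q}+d_{j}^{q}\le 2\bigl(a_{j}^{p}+b_{j}^{p}\bigr)^{q/p}$ for every $j$. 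Here I would exploit that each block carries the Euclidean norm: the parallelogram law gives $s_{j}^{2}+d_{j}^{2}=2a_{j}^{2}+2b_{j}^{2}$, and applying the triangle inequality to $2\bm{x}_{j}=(\bm{x}_{j}+\bm{y}_{j})+(\bm{x}_{j}-\bm{y}_{j})$ and to $2\bm{y}_{j}=(\bm{x}_{j}+\bm{y}_{j})-(\bm{x}_{j}-\bm{y}_{j})$ gives $s_{j}+d_{j}\ge 2a_{j}$ and $s_{j}+d_{j}\ge 2b_{j}$. Setting $W_{j}=\tfrac{1}{2}(s_{j}+d_{j})$ and $Z_{j}=\tfrac{1}{2}(s_{j}-d_{j})$, one has $|W_{j}+Z_{j}|=s_{j}$, $|W_{j}-Z_{j}|=d_{j}$, $W_{j}^{2}+Z_{j}^{2}=a_{j}^{2}+b_{j}^{2}$, and, assuming $a_{j}\ge b_{j}$ (as we may), $W_{j}^{2}\ge a_{j}^{2}\ge b_{j}^{2}\ge Z_{j}^{2}$. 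Since $u\mapsto u^{p/2}$ is concave on $[0,\infty)$ for $1<p\le 2$, the map $u\mapsto u^{p/2}+(a_{j}^{2}+b_{j}^{2}-u)^{p/2}$ is non-increasing for $u\ge\tfrac{1}{2}(a_{j}^{2}+b_{j}^{2})$, hence $|W_{j}|^{p}+|Z_{j}|^{p}\le a_{j}^{p}+b_{j}^{p}$. Feeding $w=W_{j}$, $z=Z_{j}$ into the one-variable form of Clarkson's second inequality $|w+z|^{q}+|w-z|^{q}\le 2(|w|^{p}+|z|^{p})^{q/p}$ (classical; it is contained in \cite{MR1501880}, or can be given a short one-variable proof) and using monotonicity of $t\mapsto t^{q/p}$ yields the per-block estimate.

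The second step assembles the per-block estimates. Applying Minkowski's inequality to the planar vectors $(s_{j}^{p},d_{j}^{p})_{j}$ in $\mathbb{R}^{2}$ with the $\ell^{q/p}$-norm (valid since $q/p\ge 1$) gives
$$
\left[\Bigl(\sum_{j}s_{j}^{p}\Bigr)^{q/p}+\Bigl(\sum_{j}d_{j}^{p}\Bigr)^{q/p}\right]^{p/q}\le\sum_{j}\bigl(s_{j}^{q}+d_{j}^{q}\bigr)^{p/q}\le\sum_{j}2^{p/q}\bigl(a_{j}^{p}+b_{j}^{p}\bigr)=2^{p/q}\bigl(\|\bm{x}\|_{p,\bm{k}}^{p}+\|\bm{y}\|_{p,\bm{k}}^{p}\bigr),
$$
where the second inequality is the per-block estimate; raising to the power $q/p$ produces exactly the desired Clarkson inequality. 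All series converge because $\bm{x},\bm{y}\in\ell_{p,\bm{k}}$ and, by the triangle inequality already proved, $\bm{x}\pm\bm{y}\in\ell_{p,\bm{k}}$.

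I expect the per-block estimate to be the only delicate point, and the trap to avoid there is bounding $s_{j}^{q}+d_{j}^{q}\le(s_{j}^{2}+d_{j}^{2})^{q/2}=2^{q/2}(a_{j}^{2}+b_{j}^{2})^{q/2}$ and then comparing $\|\cdot\|_{2}$ with $\|\cdot\|_{p}$ on $\mathbb{R}^{2}$: this is clean but loses a factor, producing only the constant $2^{q/p}=2^{q-1}>2$ and hence a non-sharp modulus. Recovering the sharp constant $2$ — and therefore the exact modulus $\delta_{p}(\epsilon)$ of Theorem~\ref{uniformconvex} — forces the concavity/rearrangement step, which records that $s_{j}$ and $d_{j}$ cannot both be large relative to $a_{j},b_{j}$, combined with the sharp scalar Clarkson inequality; securing that scalar inequality in precisely the form stated (or reproving it) is the one ingredient that is not entirely routine.
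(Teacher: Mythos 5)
Your proposal is correct and follows essentially the same route as the paper: both establish Clarkson's second inequality $\|\bm{x}+\bm{y}\|_{p,\bm{k}}^q+\|\bm{x}-\bm{y}\|_{p,\bm{k}}^q\le 2\left(\|\bm{x}\|_{p,\bm{k}}^p+\|\bm{y}\|_{p,\bm{k}}^p\right)^{q/p}$ by first proving a per-Euclidean-block version and then assembling the blocks with Minkowski's inequality with exponent $q/p$ (your triangle inequality in $\ell^{q/p}(\mathbb{R}^2)$ is the paper's reverse Minkowski with exponent $p/q\le1$), after which the modulus $\delta_p(\epsilon)=1-(1-(\epsilon/2)^q)^{1/q}$ is read off identically. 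The only divergence is inside the per-block step, where the paper fixes $\|\bm{\alpha}\|_2,\|\bm{\beta}\|_2$ and maximizes over the inner product $\bm{\alpha}\cdot\bm{\beta}$ (worst case: parallel vectors) while you pass to the pair $(W_j,Z_j)$ via the parallelogram law and a concavity argument — but both reductions terminate in the same scalar Clarkson inequality from \cite{MR1501880}, so this is a cosmetic rather than structural difference, and your handling of $p>2$ via the first Clarkson inequality is likewise sound.
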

\begin{remark}
We can choose $\delta_{p}(\epsilon)$ independent of $\bm{k}$.
\end{remark}
The proof of Proposition \ref{yizhitu} is similar to the proof of uniform convexity of $\ell_{p, \bm{k}_\infty}$ in \cite{MR1501880}, so we first introduce the following lemma, which is a generalization of \cite[Theorem 2]{MR1501880}.
\begin{lemma}
For the space $\ell_{p, \bm{k}}$ defined above, with $p\geq2$, the following inequalities between the norms of two arbitrary elements $\bm{x}$ and $\bm{y}$ of the space are valid (Here $q$ is the conjugate index, $q=p/(p-1)$).
\begin{equation}\label{budengshi3}
2\left(\|\bm{x}\|_{p, \bm{k}}^p+\|\bm{y}\|_{p, \bm{k}}^p\right)\leq\|\bm{x}+\bm{y}\|_{p, \bm{k}}^p+\|\bm{x}-\bm{y}\|_{p, \bm{k}}^p\leq2^{p-1}\left(\|\bm{x}\|_{p, \bm{k}}^p+\|\bm{y}\|_{p, \bm{k}}^p\right);
\end{equation}
\begin{equation}\label{budengshi4}
2\left(\|\bm{x}\|_{p, \bm{k}}^p+\|\bm{y}\|_{p, \bm{k}}^p\right)^{q-1}\leq\|\bm{x}+\bm{y}\|_{p, \bm{k}}^q+\|\bm{x}-\bm{y}\|_{p, \bm{k}}^q;
\end{equation}
\begin{equation}\label{budengshi5}
\|\bm{x}+\bm{y}\|_{p, \bm{k}}^p+\|\bm{x}-\bm{y}\|_{p, \bm{k}}^p\leq2\left(\|\bm{x}\|_{p, \bm{k}}^q+\|\bm{y}\|_{p, \bm{k}}^q\right)^{p-1}.
\end{equation}
For $1<p\leq2$ these inequalities hold in the reverse sense.
\end{lemma}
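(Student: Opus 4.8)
\quad
The idea is to reduce all six inequalities to their counterparts for the scalar sequence space $\ell^p=\ell_{p,\bm{k}_\infty}$, which are Clarkson's classical inequalities \cite[Theorem 2]{MR1501880}, by exploiting the Euclidean structure of the blocks (alternatively one may adapt Clarkson's original argument, with the block parallelogram law in the role of the scalar identity $|a+b|^2+|a-b|^2=2|a|^2+2|b|^2$). Given $\bm{x},\bm{y}\in\ell_{p,\bm{k}}$, write $\bm{x}_j,\bm{y}_j$ for their $j$-th blocks and set $\alpha_j=\|\bm{x}_j\|_2$, $\beta_j=\|\bm{y}_j\|_2$, $s_j=\|\bm{x}_j+\bm{y}_j\|_2$, $d_j=\|\bm{x}_j-\bm{y}_j\|_2$, so that $\|\bm{x}\|_{p,\bm{k}}^p=\sum_j\alpha_j^p$, $\|\bm{x}+\bm{y}\|_{p,\bm{k}}^p=\sum_j s_j^p$, and likewise for $\bm{y}$ and $\bm{x}-\bm{y}$. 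The two facts about the blocks that we use are the parallelogram law $s_j^2+d_j^2=2\alpha_j^2+2\beta_j^2=(\alpha_j+\beta_j)^2+(\alpha_j-\beta_j)^2$ and the triangle inequality $\max(s_j,d_j)\le\alpha_j+\beta_j$.

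Inequality \eqref{budengshi3} I would prove coordinatewise. For $p\ge2$, convexity and superadditivity of $t\mapsto t^{p/2}$ on $[0,\infty)$ together with the parallelogram law give, for each $j$, $s_j^p+d_j^p=(s_j^2)^{p/2}+(d_j^2)^{p/2}\le(s_j^2+d_j^2)^{p/2}=2^{p/2}(\alpha_j^2+\beta_j^2)^{p/2}\le2^{p-1}(\alpha_j^p+\beta_j^p)$, and dually $s_j^p+d_j^p\ge2(\alpha_j^2+\beta_j^2)^{p/2}\ge2(\alpha_j^p+\beta_j^p)$; summing over $j$ yields \eqref{budengshi3}. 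For $1<p\le2$ the map $t\mapsto t^{p/2}$ is concave and subadditive, so every one of these estimates reverses, giving the reversed form of \eqref{budengshi3}.

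The essential step is \eqref{budengshi5}, and it is the one place the argument is not coordinatewise. I would first record an elementary one-variable fact: on the quarter circle $\{(a,b):a,b\ge0,\ a^2+b^2=R^2\}$, the quantity $a^p+b^p$, written as $M^p+(R^2-M^2)^{p/2}$ with $M=\max(a,b)$, is nondecreasing in $M$ when $p\ge2$ and nonincreasing when $1<p\le2$ (its $M$-derivative is $pM(M^{p-2}-(R^2-M^2)^{(p-2)/2})$ and $R^2-M^2=\min(a,b)^2\le M^2$). Since the parallelogram law places $(s_j,d_j)$ and $(\alpha_j+\beta_j,|\alpha_j-\beta_j|)$ on the same quarter circle and $\max(s_j,d_j)\le\alpha_j+\beta_j$, this gives, for $p\ge2$, the block estimate $s_j^p+d_j^p\le(\alpha_j+\beta_j)^p+|\alpha_j-\beta_j|^p$, and its reverse for $1<p\le2$. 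Summing over $j$ and applying the scalar version of \eqref{budengshi5} to the nonnegative sequences $(\alpha_j)_j$ and $(\beta_j)_j$ — which reads $\sum_j(\alpha_j+\beta_j)^p+\sum_j|\alpha_j-\beta_j|^p\le2(\|\bm{x}\|_{p,\bm{k}}^q+\|\bm{y}\|_{p,\bm{k}}^q)^{p-1}$ — closes the chain; for $1<p\le2$ both the block estimate and the scalar inequality reverse, so the same argument works. Finally \eqref{budengshi4} follows from \eqref{budengshi5} by the substitution $\bm{x}\mapsto\bm{x}+\bm{y}$, $\bm{y}\mapsto\bm{x}-\bm{y}$ (valid since $\ell_{p,\bm{k}}$ is a linear space): its left side then equals $2^p(\|\bm{x}\|_{p,\bm{k}}^p+\|\bm{y}\|_{p,\bm{k}}^p)$ and its right side $2(\|\bm{x}+\bm{y}\|_{p,\bm{k}}^q+\|\bm{x}-\bm{y}\|_{p,\bm{k}}^q)^{p-1}$, so dividing by $2$ and raising to the power $q-1=1/(p-1)$ gives \eqref{budengshi4} (the reversed case is identical).

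I expect \eqref{budengshi5} to be the only real obstacle: because its right-hand side raises a sum to a power it cannot be proved term by term, so one genuinely needs the scalar second Clarkson inequality, and the quarter-circle monotonicity lemma is what allows the passage from the Euclidean blocks back to scalars. The only subtlety is bookkeeping — for $1<p\le2$ the block estimates, the scalar Clarkson inequalities, and hence the entire chain all reverse at once.
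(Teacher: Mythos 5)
Your proposal is correct, and it reaches the lemma by a genuinely different route than the paper. The paper's central step is a block-level Euclidean version of Clarkson's second inequality, namely $2\left(\|\bm{\alpha}\|_{2}^p+\|\bm{\beta}\|_{2}^p\right)^{q-1}\geq\|\bm{\alpha}+\bm{\beta}\|_{2}^q+\|\bm{\alpha}-\bm{\beta}\|_{2}^q$ for $1<p\leq2$, proved by normalizing and reducing to the one-variable inequality $2(1+b^p)^{q-1}\geq(1+b)^q+(1-b)^q$ quoted from Clarkson; the blocks are then glued together with the Minkowski inequality used in the reverse sense with exponent $s=p/q\leq1$ (and in the usual sense for $p\geq2$), and (\ref{budengshi3}) is deduced afterwards from (\ref{budengshi5}) together with the scalar inequality $2(x^q+y^q)^{p-1}\leq2^{p-1}(x^p+y^p)$. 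You instead prove (\ref{budengshi3}) directly block by block from the parallelogram law and convexity/concavity of $t\mapsto t^{p/2}$, and for (\ref{budengshi5}) you compare each block to the scalar pair $(\alpha_j+\beta_j,|\alpha_j-\beta_j|)$ via your quarter-circle monotonicity observation (which is correct: both pairs lie on the circle of radius $\sqrt{2\alpha_j^2+2\beta_j^2}$ and $\max(s_j,d_j)\leq\alpha_j+\beta_j$), thereby reducing the whole statement to Clarkson's inequality for the scalar sequence space $\ell^p$, invoked as a known theorem; (\ref{budengshi4}) then follows by the substitution $\bm{x}\mapsto\bm{x}+\bm{y}$, $\bm{y}\mapsto\bm{x}-\bm{y}$, which is exactly the equivalence the paper records at the outset. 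The trade-off: your argument isolates cleanly where the Euclidean structure of the blocks enters (only through the parallelogram law and triangle inequality), treats both ranges of $p$ uniformly, and avoids the reverse-Minkowski gluing, but it uses the full sequence-space Clarkson theorem as a black box; the paper quotes only two elementary one-variable inequalities from Clarkson's proof and is otherwise self-contained, at the cost of the separate Minkowski computations for $p\geq2$ and $1<p\leq2$.
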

\begin{proof}
For simplicity, we write $\ell=\ell_{p, \bm{k}}$ and $\|\cdot\|=\|\cdot\|_{p, \bm{k}}$ in the proof. First note that for all values of $p$ the right hand side of inequality (\ref{budengshi3}) is equivalent to the left hand side, while inequality (\ref{budengshi4}) is equivalent to inequality (\ref{budengshi5}); to see this, set $\bm{x}+\bm{y}=\bm{\xi}, \bm{x}-\bm{y}=\bm{\eta}$ and we have
\begin{equation*}
\begin{split}
&\|\bm{x}+\bm{y}\|^p+\|\bm{x}-\bm{y}\|^p\leq2^{p-1}(\|\bm{x}\|^p+\|\bm{y}\|^p)\\
\Leftrightarrow&\|\bm{\xi}\|^p+\|\bm{\eta}\|^p\leq2^{p-1}(\|(\bm{\xi}+\bm{\eta})/2\|^p+\|(\bm{\xi}-\bm{\eta})/2\|^p)\\
\Leftrightarrow&\|\bm{\xi}\|^p+\|\bm{\eta}\|^p\leq2^{-1}(\|\bm{\xi}+\bm{\eta}\|^p+\|\bm{\xi}-\bm{\eta}\|^p)\\
\Leftrightarrow&2(\|\bm{\xi}\|^p+\|\bm{\eta}\|^p)\leq\|\bm{\xi}+\bm{\eta}\|^p+\|\bm{\xi}-\bm{\eta}\|^p,
\end{split}
\end{equation*}
and
\begin{equation*}
\begin{split}
&2(\|\bm{x}\|^p+\|\bm{y}\|^p)^{q-1}\leq\|\bm{x}+\bm{y}\|^q+\|\bm{x}-\bm{y}\|^q\\
\Leftrightarrow&2(\|(\bm{\xi}+\bm{\eta})/2\|^p+\|(\bm{\xi}-\bm{\eta})/2\|^p)^{q-1}\leq\|\bm{\xi}\|^q+\|\bm{\eta}\|^q\\
\Leftrightarrow&2^{1-p(q-1)}(\|\bm{\xi}+\bm{\eta}\|^p+\|\bm{\xi}-\bm{\eta}\|^p)^{q-1}\leq\|\bm{\xi}\|^q+\|\bm{\eta}\|^q\\
\Leftrightarrow&\|\bm{\xi}+\bm{\eta}\|^p+\|\bm{\xi}-\bm{\eta}\|^p\leq2(\|\bm{\xi}\|^q+\|\bm{\eta}\|^q)^{p-1}.
\end{split}
\end{equation*}

We first prove inequality (\ref{budengshi4}) for $1<p\leq2$, i.e.
\begin{equation}\label{budengshi4pie}
2(\|\bm{x}\|^p+\|\bm{y}\|^p)^{q-1}\geq\|\bm{x}+\bm{y}\|^q+\|\bm{x}-\bm{y}\|^q.
\end{equation}
We claim that for $\bm{\alpha}, \bm{\beta}\in\mathbb{R}^k$, we have
\begin{equation}\label{budengshi6}
2\left(\|\bm{\alpha}\|_{2}^p+\|\bm{\beta}\|_{2}^p\right)^{q-1}\geq\|\bm{\alpha}+\bm{\beta}\|_{2}^q+\|\bm{\alpha}-\bm{\beta}\|_{2}^q.
\end{equation}
If one of $\bm{\alpha}$ and $\bm{\beta}$ is $\bm{0}$, the inequality holds trivially. Assume that $\|\bm{\alpha}\|_{2}\geq\|\bm{\beta}\|_{2}>0$. Divide by $\|\bm{\alpha}\|_{2}^q$ in the both sides of inequality (\ref{budengshi6}) and use the homogeneity, reducing inequality (\ref{budengshi6}) to
\begin{equation}\label{budengshi7}
2\left(\left\|\frac{\bm{\alpha}}{\|\bm{\alpha}\|_{2}}\right\|_{2}^p+\left\|\frac{\bm{\beta}}{\|\bm{\alpha}\|_{2}}\right\|_{2}^p\right)^{q-1}\geq\left\|\frac{\bm{\alpha}}{\|\bm{\alpha}\|_{2}}+\frac{\bm{\beta}}{\|\bm{\alpha}\|_{2}}\right\|_{2}^q+\left\|\frac{\bm{\alpha}}{\|\bm{\alpha}\|_{2}}-\frac{\bm{\beta}}{\|\bm{\alpha}\|_{2}}\right\|_{2}^q.
\end{equation}
Let $\bm{a}=\frac{\bm{\alpha}}{\|\bm{\alpha}\|_{2}}$ and $\bm{b}=\frac{\bm{\beta}}{\|\bm{\alpha}\|_{2}}$. Then $\|\bm{a}\|_{2}=1$ and $\|\bm{b}\|_{2}\leq1$, and inequality (\ref{budengshi7}) is equivalent to
\begin{equation*}
2\left(1+\left\|\bm{b}\right\|_{2}^p\right)^{q-1}\geq\left\|\bm{a}+\bm{b}\right\|_{2}^q+\left\|\bm{a}-\bm{b}\right\|_{2}^q,
\end{equation*}
i.e.
\begin{equation*}
2\left(1+\left\|\bm{b}\right\|_{2}^p\right)^{q-1}\geq\left(\left\|\bm{a}\right\|_{2}^2+\left\|\bm{b}\right\|_{2}^2+2\bm{a}\cdot\bm{b}\right)^{q/2}+\left(\left\|\bm{a}\right\|_{2}^2+\left\|\bm{b}\right\|_{2}^2-2\bm{a}\cdot\bm{b}\right)^{q/2},
\end{equation*}
or equivalently,
\begin{equation*}
2\left(1+\left\|\bm{b}\right\|_{2}^p\right)^{q-1}\geq\left(1+\left\|\bm{b}\right\|_{2}^2+2\bm{a}\cdot\bm{b}\right)^{q/2}+\left(1+\left\|\bm{b}\right\|_{2}^2-2\bm{a}\cdot\bm{b}\right)^{q/2},
\end{equation*}
where $\bm{a}\cdot\bm{b}$ is the usual inner product.
Suppose $\left\|\bm{b}\right\|_{2}=b\in[0, 1]$, so
$$
|\bm{a}\cdot\bm{b}|\leq\left\|\bm{a}\right\|_{2}\left\|\bm{b}\right\|_{2}=b.
$$
Without loss of generality assume that $\bm{a}\cdot\bm{b}\in[0, b]$. Consider the function
$$
g(x)=(1+b^2+2x)^{q/2}+(1+b^2-2x)^{q/2}, x\in[0, b].
$$
We have $g'(x)=q(1+b^2+2x)^{q/2-1}-q(1+b^2-2x)^{q/2-1}\geq0$ for $x\in[0, b]$, since $q\geq2$. Thus $\max g(x)=g(b)=(1+b^2+2b)^{q/2}+(1+b^2-2b)^{q/2}=(b+1)^q+(1-b)^q$ and it suffices to prove
\begin{equation}\label{budengshi7pie}
2\left(1+b^p\right)^{q-1}\geq\left(1+b\right)^{q}+\left(1-b\right)^{q}.
\end{equation}
Inequality (\ref{budengshi7pie}) has been proved in \cite{MR1501880} (see the proof of \cite[Theorem 2]{MR1501880}). So inequality (\ref{budengshi6}) is proved.

Now we are able to prove inequality (\ref{budengshi4pie}). Let $\bm{x}=(\bm{x}_1, \bm{x}_2, \ldots)=(x_1, x_2, \ldots), \bm{y}=(\bm{y}_1, \bm{y}_2, \ldots)=(y_1, y_2, \ldots)\in\ell$, where
$$
\bm{x}_j=(x_{k_j+1}, x_{k_j+2}, \ldots, x_{k_{j+1}})
$$
and
$$
\bm{y}_j=(y_{k_j+1}, y_{k_j+2}, \ldots, y_{k_{j+1}}).
$$
Inequality (\ref{budengshi4pie}) states that
\begin{equation}\label{budengshi8}
2\left(\sum_{j=1}^{\infty}\left(\|\bm{x}_j\|_{2}^p+\|\bm{y}_j\|_{2}^p\right)\right)^{q-1}\geq\left(\sum_{j=1}^{\infty}\|\bm{x}_j+\bm{y}_j\|_{2}^p\right)^{q/p}+\left(\sum_{j=1}^{\infty}\|\bm{x}_j-\bm{y}_j\|_{2}^p\right)^{q/p}.
\end{equation}
Using Minkowski inequality (\ref{minkowski}) in the reverse sense with $A_j=\|\bm{x}_j+\bm{y}_j\|_{2}^q, B_j=\|\bm{x}_j-\bm{y}_j\|_{2}^q$ and $s=p/q\leq1$, we infer that the right hand side of inequality (\ref{budengshi8}) is
$$
\left(\sum_{j=1}^{\infty}A_j^s\right)^{1/s}+\left(\sum_{j=1}^{\infty}B_j^s\right)^{1/s}\leq\left(\sum_{j=1}^{\infty}\left(A_j+B_j\right)^s\right)^{1/s}=\left(\sum_{j=1}^{\infty}\left(\|\bm{x}_j+\bm{y}_j\|_{2}^q+\|\bm{x}_j-\bm{y}_j\|_{2}^q\right)^{p/q}\right)^{q/p},
$$
which by inequality (\ref{budengshi6}) is
$$
\leq\left(\sum_{j=1}^{\infty}\left(2\left(\|\bm{x}_j\|_{2}^p+\|\bm{y}_j\|_{2}^p\right)^{q-1}\right)^{p/q}\right)^{q/p}=2\left(\sum_{j=1}^{\infty}\left(\|\bm{x}_j\|_{2}^p+\|\bm{y}_j\|_{2}^p\right)\right)^{q/p}.
$$
Since $q/p=q-1$, this is our result. It completes the proof of inequality (\ref{budengshi4}) for $1<p\leq2$.

Now we prove inequality (\ref{budengshi4}) for $p\geq2$, and we need to prove inequality (\ref{budengshi8}) in the reverse sense, i.e.
\begin{equation}\label{budengshi8pie}
2\left(\sum_{j=1}^{\infty}\left(\|\bm{x}_j\|_{2}^p+\|\bm{y}_j\|_{2}^p\right)\right)^{q-1}\leq\left(\sum_{j=1}^{\infty}\|\bm{x}_j+\bm{y}_j\|_{2}^p\right)^{q/p}+\left(\sum_{j=1}^{\infty}\|\bm{x}_j-\bm{y}_j\|_{2}^p\right)^{q/p}.
\end{equation}
Letting $A_j, B_j$ and $s$ have the same values as above, and again applying Minkowski inequality (\ref{minkowski}), we conclude that the right hand side of inequality (\ref{budengshi8pie}) is
\begin{equation}\label{budengshi9}
\left(\sum_{j=1}^{\infty}A_j^s\right)^{1/s}+\left(\sum_{j=1}^{\infty}B_j^s\right)^{1/s}\geq\left(\sum_{j=1}^{\infty}\left(A_j+B_j\right)^s\right)^{1/s}=\left(\sum_{j=1}^{\infty}\left(\|\bm{x}_j+\bm{y}_j\|_{2}^q+\|\bm{x}_j-\bm{y}_j\|_{2}^q\right)^{p/q}\right)^{q/p}.
\end{equation}
Recall that inequality (\ref{budengshi6}) holds for $1<p\leq2$, and it is equivalent to
$$
\|\bm{\alpha}+\bm{\beta}\|_{2}^p+\|\bm{\alpha}-\bm{\beta}\|_{2}^p\geq2\left(\|\bm{\alpha}\|_{2}^q+\|\bm{\beta}\|_{2}^q\right)^{p-1}.
$$
After exchanging the role of $p$ and $q$, we have
$$
\|\bm{\alpha}+\bm{\beta}\|_{2}^q+\|\bm{\alpha}-\bm{\beta}\|_{2}^q\geq2\left(\|\bm{\alpha}\|_{2}^p+\|\bm{\beta}\|_{2}^p\right)^{q-1},
$$
for $p\geq2$.
Thus inequality ($\ref{budengshi9}$) is
$$
\geq\left(\sum_{j=1}^{\infty}\left(2\left(\|\bm{x}_j\|_{2}^p+\|\bm{y}_j\|_{2}^p\right)^{q-1}\right)^{p/q}\right)^{q/p}=2\left(\sum_{j=1}^{\infty}\left(\|\bm{x}_j\|_{2}^p+\|\bm{y}_j\|_{2}^p\right)\right)^{q-1}.
$$
Therefore, we complete the proof of inequality (\ref{budengshi4}) for $p\geq2$.

Finally we prove inequality (\ref{budengshi3}). Let $p\geq2$ and consider the right hand inequality. This is simply implied by inequality (\ref{budengshi5}): For $x, y\geq0$, we have
\begin{equation}\label{budengshi10}
2(x^q+y^q)^{p-1}\leq2^{p-1}(x^p+y^p).
\end{equation}
Inequality (\ref{budengshi10}) has been proved in \cite{MR1501880} (see the proof of \cite[Theorem 2]{MR1501880}). For $1<p\leq2$, it follows from inequality (\ref{budengshi10}) that
\begin{equation}\label{budengshi10pie}
2(x^q+y^q)^{p-1}\geq2^{p-1}(x^p+y^p).
\end{equation}
The right hand side of inequality (\ref{budengshi3}) (in the reverse sense) follows from inequality (\ref{budengshi5}) (in the reverse sense) and inequality (\ref{budengshi10pie}).
\end{proof}

Now we are able to prove Proposition \ref{yizhitu}.

\begin{proof}[Proof of Proposition \ref{yizhitu}]
For $p\geq2$, letting $\|\bm{x}\|_{p, \bm{k}}=\|\bm{y}\|_{p, \bm{k}}=1$ in inequality (\ref{budengshi3}), we have
$$
\|\bm{x}+\bm{y}\|_{p, \bm{k}}^p+\|\bm{x}-\bm{y}\|_{p, \bm{k}}^p\leq2^{p}.
$$
If $\|\bm{x}-\bm{y}\|_{p, \bm{k}}\geq\epsilon$, then
$$
\left\|\frac{\bm{x}+\bm{y}}{2}\right\|_{p, \bm{k}}\leq(1-(\epsilon/2)^p)^{1/p}.
$$
So we can choose $\delta(\epsilon)=1-(1-(\epsilon/2)^p)^{1/p}$. Similarly, for $1<p\leq2$, we can choose $\delta(\epsilon)=1-(1-(\epsilon/2)^q)^{1/q}$ by inequality (\ref{budengshi4}).
\end{proof}

\section{The hard superball model}\label{diyizhong}
Given $p>1, n\in\mathbb{N}$ and $\bm{k}=(k_1, k_2, \ldots, k_{m+1})$ with $0=k_1<k_2<\cdots<k_{m+1}=n$, consider the translative packing of $B_{p, \bm{k}}^n(\bm{0}, r_{p, \bm{k}, n})$. For a bounded, measurable subset $S\subseteq\mathbb{R}^n$, let
$$
P_{t, p, \bm{k}}(S)=\{\{\bm{x}_1, \bm{x}_2, \ldots, \bm{x}_t\}\subseteq S:d_{p, \bm{k}, n}(\bm{x}_i, \bm{x}_j)\geq2r_{p, \bm{k}, n}\text{ for every }i\neq j\}.
$$
be the family consisting of unordered $t$-tuples of points from $S$ that can form a packing.

The \textit{canonical hard superball model} on $S$ with $t$ centers is simply a uniformly random $t$-tuple $X_{t, p, \bm{k}}\in P_{t, p, \bm{k}}(S)$. The partition function of the canonical hard superball model on $S$ is the function
\begin{equation}\label{defzhat}
\hat{Z}_{S, p, \bm{k}}(t)=\frac{1}{t!}\int_{S^t}\textbf{1}_{\mathcal{D}_{p, \bm{k}}(\bm{x}_1, \bm{x}_2, \ldots, \bm{x}_t)}d\bm{x}_1d\bm{x}_2\cdots d\bm{x}_t,
\end{equation}
where for $\bm{x}_1, \bm{x}_2, \ldots, \bm{x}_t\in\mathbb{R}^n$, the expression $\mathcal{D}_{p, \bm{k}}(\bm{x}_1, \bm{x}_2, \ldots, \bm{x}_t)$ denotes the event that $d_{p, \bm{k}, n}(\bm{x}_i, \bm{x}_j)\geq2r_{p, \bm{k}, n}$ for every $i\neq j$.

The \textit{grand canonical hard superball model} on $S$ at fugacity $\lambda$ is a random set $X$ of unordered points, with $X$ distributed according to a Poisson point process of intensity $\lambda$ conditioned on the event that $d_{p, \bm{k}, n}(\bm{x}, \bm{y})\geq2r_{p, \bm{k}, n}$ for all distinct $\bm{x}, \bm{y}\in X$. In other words, we first choose $t$ from $\{0, 1, 2, \ldots\}$ with probability $\propto$ $\lambda^t\hat{Z}_{S, p, \bm{k}}(t)$, then we independently, uniformly choose $X$ from $P_{t, p, \bm{k}}(S)$. The partition function of the grand canonical hard superball model on $S$ is
\begin{equation}
Z_{S, p, \bm{k}}(\lambda)=\sum_{t=0}^{\infty}\lambda^t\hat{Z}_{S, p, \bm{k}}(t),
\end{equation}
where we take $\hat{Z}_{S, p, \bm{k}}(0)=1$. Note that if $S$ is bounded then $Z_{S, p, \bm{k}}(\lambda)$ is a polynomial in $\lambda$.

The \textit{expected packing density}, $\alpha_{S, p, \bm{k}}(\lambda)$, of the hard superball model is simply the expected number of centers in $S$ normalized by the volume of $S$; that is,
$$
\alpha_{S, p, \bm{k}}(\lambda)=\frac{\mathbb{E}_{S, p, \bm{k}, \lambda}|X|}{\text{vol}(S)}.
$$
Here and in what follows the notation $\mathbb{P}_{S, p, \bm{k}, \lambda}$ and $\mathbb{E}_{S, p, \bm{k}, \lambda}$ indicates probabilities and expectations with respect to the grand canonical hard superball model on a region $S$ at fugacity $\lambda$.

The expected packing density can be expressed as the derivative of the normalized log partition function.
\begin{equation}\label{dengshi4}
\begin{split}
\alpha_{S, p, \bm{k}}(\lambda)&=\frac{1}{\text{vol}(S)}\sum_{t=1}^\infty t\cdot\mathbb{P}_{S, p, \bm{k}, \lambda}(|X|=t)\\
&=\frac{1}{\text{vol}(S)}\sum_{t=1}^\infty\frac{t\cdot\lambda^t\hat{Z}_{S, p, \bm{k}}(t)}{Z_{S, p, \bm{k}}(\lambda)}\\
&=\frac{1}{\text{vol}(S)}\frac{\lambda\cdot (Z_{S, p, \bm{k}}(\lambda))'}{Z_{S, p, \bm{k}}(\lambda)}\\
&=\frac{\lambda}{\text{vol}(S)}(\log Z_{S, p, \bm{k}}(\lambda))'.
\end{split}
\end{equation}
Here and in what follows $\log x$ always denotes the natural logarithm of $x$.

\begin{lemma}\label{xiajie}
The asymptotic expected packing density of $B^n_{p, \bm{k}}(R)\subseteq\mathbb{R}^n$ is a lower bound on the maximum superball packing density. That is, for any $\lambda>0$,
$$
\Delta_{p, \bm{k}}(n)\geq\limsup_{R\rightarrow\infty}\alpha_{B^n_{p, \bm{k}}(R), p, \bm{k}}(\lambda).
$$
\end{lemma}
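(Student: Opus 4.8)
The plan is to exploit the fact that, by its very definition, a sample from the grand canonical hard superball model on a bounded region is almost surely a legal translative packing, so that its expected size, after a harmless boundary correction, bounds $\Delta_{p,\bm{k}}(n)$ from below. Fix $\lambda>0$, abbreviate $r=r_{p,\bm{k},n}$, and let $S=B^n_{p,\bm{k}}(R)$ and let $X$ denote the random point set of the grand canonical hard superball model on $S$ at fugacity $\lambda$. Since $S$ is bounded, $Z_{S,p,\bm{k}}(\lambda)$ is a polynomial in $\lambda$ and $\alpha_{S,p,\bm{k}}(\lambda)$ is a well-defined finite quantity. Almost surely $d_{p,\bm{k},n}(\bm{x},\bm{y})\ge 2r$ for all distinct $\bm{x},\bm{y}\in X$, which is precisely the statement that the unit-volume superballs $\{B^n_{p,\bm{k}}(\bm{x},r):\bm{x}\in X\}$ have pairwise disjoint interiors; hence each realization of $X$ is the center set of a legal packing.

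Next I would account for the superballs that protrude from $S$. If $\bm{x}\in S=B^n_{p,\bm{k}}(R)$, then by the triangle inequality for the norm $\|\cdot\|_{p,\bm{k},n}$ the superball $B^n_{p,\bm{k}}(\bm{x},r)$ is contained in $B^n_{p,\bm{k}}(R+r)$. Because the $|X|$ superballs centered at the points of $X$ are pairwise disjoint and each has volume $1$, the volume of $B^n_{p,\bm{k}}(R+r)$ covered by the packing with center set $X$ equals $|X|$. Therefore, for every realization of $X$,
$$\sup_{\mathcal{P}}\frac{\textrm{vol}(\mathcal{P}\cap B^n_{p,\bm{k}}(R+r))}{\textrm{vol}(B^n_{p,\bm{k}}(R+r))}\ \ge\ \frac{|X|}{\textrm{vol}(B^n_{p,\bm{k}}(R+r))},$$
the supremum being over all legal packings $\mathcal{P}$. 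The left-hand side is a fixed number, so taking expectations and using $\mathbb{E}_{S,p,\bm{k},\lambda}|X|=\alpha_{S,p,\bm{k}}(\lambda)\,\textrm{vol}(S)$ yields
$$\sup_{\mathcal{P}}\frac{\textrm{vol}(\mathcal{P}\cap B^n_{p,\bm{k}}(R+r))}{\textrm{vol}(B^n_{p,\bm{k}}(R+r))}\ \ge\ \alpha_{S,p,\bm{k}}(\lambda)\cdot\frac{\textrm{vol}(B^n_{p,\bm{k}}(R))}{\textrm{vol}(B^n_{p,\bm{k}}(R+r))}.$$

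Finally I would let $R\to\infty$. Since $B^n_{p,\bm{k}}(R)$ is the image of $B^n_{p,\bm{k}}(1)$ under dilation by $R$, we have $\textrm{vol}(B^n_{p,\bm{k}}(R))/\textrm{vol}(B^n_{p,\bm{k}}(R+r))=(R/(R+r))^n\to1$ as $R\to\infty$ for fixed $n$. As $R$ ranges over $(0,\infty)$ so does $R+r$, so the $\limsup$ of the left-hand side above is exactly $\Delta_{p,\bm{k}}(n)$; evaluating the right-hand side along a sequence $R_i\to\infty$ on which $\alpha_{B^n_{p,\bm{k}}(R_i),p,\bm{k}}(\lambda)$ tends to $\limsup_{R\to\infty}\alpha_{B^n_{p,\bm{k}}(R),p,\bm{k}}(\lambda)$ (and on which the volume ratio tends to $1$) shows its $\limsup$ is at least this number. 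Combining the two gives $\Delta_{p,\bm{k}}(n)\ge\limsup_{R\to\infty}\alpha_{B^n_{p,\bm{k}}(R),p,\bm{k}}(\lambda)$, as claimed. The only delicate points are the identification of a random sample with an honest packing (immediate from the definition of the model) and the enlargement of the region from radius $R$ to $R+r$ together with the attendant volume ratio tending to $1$; neither is a genuine obstacle, and all of the substance of the method lies in the subsequent lower bounds for $\alpha_{S,p,\bm{k}}(\lambda)$.
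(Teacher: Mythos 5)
Your proposal is correct and follows essentially the same route as the paper: a sample from the grand canonical model is almost surely a legal packing, boundary effects are absorbed by enlarging the region slightly (you use $R+r$, the paper uses $R+100$), and the volume ratio of the enlarged to the original region tends to $1$ as $R\to\infty$, so the expected density lower-bounds $\Delta_{p,\bm{k}}(n)$. No gaps.
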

\begin{proof}
By the definition of $\Delta_{p, \bm{k}}(n)$,
$$
\Delta_{p, \bm{k}}(n)=\limsup_{R\rightarrow\infty}\sup_{X\in\mathcal{P}}\frac{|X|}{(R/r_{p, \bm{k}, n})^n},
$$
where $\mathcal{P}$ is the family consisting of all packings of $B^n_{p, \bm{k}}(R)$ by superballs of radius $r=r_{p, \bm{k}, n}$, and $(R/r_{p, \bm{k}, n})^n$ is the volume of $B^n_{p, \bm{k}}(R)$. Note that in our model, some centers may be near the boundary of $B^n_{p, \bm{k}}(R)$, which should be deleted from the packing. However, we can enlarge the radius of the superball to make it be a packing. In other words, if $X$ is chosen from the model on $B^n_{p, \bm{k}}(R)$, then $X$ is a packing of $B^n_{p, \bm{k}}(R+100)$. So
\begin{equation*}
\begin{split}
\Delta_{p, \bm{k}}(n)&\geq\limsup_{R\rightarrow\infty}\frac{\mathbb{E}_{B^n_{p, \bm{k}}(R), p, \bm{k}, \lambda}(|X|)}{((R+100)/r_{p, \bm{k}, n})^n}\\
&=\limsup_{R\rightarrow\infty}\frac{\mathbb{E}_{B^n_{p, \bm{k}}(R), p, \bm{k}, \lambda}(|X|)}{((R+100)/r_{p, \bm{k}, n})^n}\cdot\frac{((R+100)/r_{p, \bm{k}, n})^n}{(R/r_{p, \bm{k}, n})^n}\\
&=\limsup_{R\rightarrow\infty}\frac{\mathbb{E}_{B^n_{p, \bm{k}}(R), p, \bm{k}, \lambda}(|X|)}{(R/r_{p, \bm{k}, n})^n}\\
&=\limsup_{R\rightarrow\infty}\alpha_{B^n_{p, \bm{k}}(R), p, \bm{k}}(\lambda).
\end{split}
\end{equation*}
\end{proof}

\begin{theorem}\label{main}
For every $p\in(1, 2]$, there exists a constant $c_p\in(0, 2)$ such that the following holds. Let $S\subseteq\mathbb{R}^n$ be bounded, measurable, and of positive volume. Then for every $\bm{k}=(k_1, k_2, \ldots, k_{m+1})$ such that $0=k_1<k_2<\cdots<k_{m+1}=n$ and every $\lambda\geq n^{-1}c_p^{-n}$, we have
$$
\alpha_{S, p, \bm{k}}(\lambda)\geq(1+o_n(1))\frac{\log(2/c_p)\cdot n}{2^n}.
$$
\end{theorem}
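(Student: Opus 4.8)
The strategy is to adapt the occupancy-method proof of the Euclidean sphere-packing lower bound of Jenssen--Joos--Perkins \cite{MR3898718}, the one genuinely new ingredient being a substitute, supplied by the uniform convexity of $\ell_{p,\bm k}$, for the Euclidean computation of how much two exclusion balls can overlap. Throughout write $r=r_{p,\bm k,n}$, so that $\mathrm{vol}\!\bigl(B^n_{p,\bm k}(2r)\bigr)=2^n$. First I would note that $\lambda\mapsto\alpha_{S,p,\bm k}(\lambda)$ is nondecreasing: differentiating \eqref{dengshi4} gives $\frac{\partial}{\partial\log\lambda}\alpha_{S,p,\bm k}(\lambda)=\mathrm{Var}_{S,p,\bm k,\lambda}(|X|)/\mathrm{vol}(S)\ge0$. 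Hence it suffices to prove $\alpha_{S,p,\bm k}(\lambda^\ast)\ge(1+o_n(1))\tfrac{\log(2/c_p)\,n}{2^n}$ for one fugacity $\lambda^\ast=\lambda^\ast_{p,\bm k,n}\le n^{-1}c_p^{-n}$, namely the one at which the occupancy estimate below is sharp (it will be of order $n2^{-n}$ up to polynomial factors, in particular well below $n^{-1}c_p^{-n}$).

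Second, the geometric input. I claim that if $d_{p,\bm k,n}(\bm a,\bm b)\ge 2r$ then
$$
B^n_{p,\bm k}(\bm a,2r)\cap B^n_{p,\bm k}(\bm b,2r)\ \subseteq\ B^n_{p,\bm k}\!\Bigl(\tfrac{\bm a+\bm b}{2},\;2r\bigl(1-\delta_p(1)\bigr)\Bigr),
$$
with $\delta_p$ the modulus of convexity from Proposition \ref{yizhitu}. Indeed, for $\bm z$ in the left-hand side the vectors $(\bm z-\bm a)/(2r)$ and $(\bm z-\bm b)/(2r)$ have $\|\cdot\|_{p,\bm k,n}$-norm at most $1$ and differ by a vector of norm at least $1$, so by uniform convexity $\bigl\|\bm z-\tfrac{\bm a+\bm b}{2}\bigr\|_{p,\bm k,n}\le 2r(1-\delta_p(1))$. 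Since $\mathrm{vol}\!\bigl(B^n_{p,\bm k}(\rho)\bigr)=(\rho/r)^n$, the overlap of two exclusion balls whose centres are at $\ell_{p,\bm k}$-distance $\ge2r$ has volume at most $c_p^n$, where $c_p:=2\bigl(1-\delta_p(1)\bigr)\in(0,2)$ (one may instead use $\delta_p(\epsilon)$ for another $\epsilon\in(0,2]$ if it gives a better constant). Thus an exclusion ball has volume $2^n$, but any two mutually far centres compete only for a $(c_p/2)^n$-fraction of it: this is the ``local sparsity'' that drives the factor-$n$ gain over the trivial bound $\alpha\ge2^{-n}$.

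Third, and this is the heart of the matter, the local occupancy estimate. Fix $\lambda=\lambda^\ast$. For a uniformly random $\bm v\in S$, reveal the points of $X$ lying outside $B^n_{p,\bm k}(\bm v,2r)$, say $Y$; conditionally on $Y$, the restriction of $X$ to $B^n_{p,\bm k}(\bm v,2r)$ is a grand canonical hard superball model at fugacity $\lambda$ on the unblocked region $\Lambda(\bm v,Y)=B^n_{p,\bm k}(\bm v,2r)\setminus\bigcup_{\bm y\in Y}B^n_{p,\bm k}(\bm y,2r)$. The Georgii--Nguyen--Zessin (Mecke) identity for this model, $\mathbb E_{S,p,\bm k,\lambda}|X|=\lambda\int_S\mathbb P_{S,p,\bm k,\lambda}\bigl(d_{p,\bm k,n}(\bm v,\bm x)\ge2r\ \text{for all}\ \bm x\in X\bigr)d\bm v$, expresses $\alpha_{S,p,\bm k}(\lambda)$ as a $(\bm v,Y)$-average of a monotone function of the local partition function $Z_{\Lambda(\bm v,Y)}(\lambda)$. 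The overlap lemma makes this local model dilute --- a point placed in $\Lambda$ forbids only $\le c_p^n$ further volume of $\Lambda$ --- so $Z_{\Lambda}(\lambda)$ is far below the naive bound $e^{\lambda\,\mathrm{vol}(\Lambda)}$, and a convexity (Jensen) argument applied to the $(\bm v,Y)$-average, closed up with a self-consistent bound on $\mathbb E[\mathrm{vol}(\Lambda)]$, yields $\alpha_{S,p,\bm k}(\lambda^\ast)\ge(1+o_n(1))\tfrac{\log(2/c_p)\,n}{2^n}$. This parallels the graph fact that a graph of maximum degree $\Delta$ whose neighbourhoods induce subgraphs of maximum degree $\delta$ has hard-core occupancy fraction $\gtrsim\tfrac{\log(\Delta/\delta)}{\Delta}$, with $\Delta\leftrightarrow2^n$ and $\delta\leftrightarrow c_p^n$. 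No hypothesis on $S$ beyond boundedness and positive volume is used: the estimate is pointwise in $\bm v$ and worst-case over the boundary configuration $Y$, and if $S$ is too small to hold many superballs the inequality is only easier; this is why Theorem \ref{main} may be stated for arbitrary such $S$.

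The main obstacle is exactly this third step: running the Jenssen--Joos--Perkins occupancy computation on the merely qualitative input ``overlap $\le c_p^n$'' coming from uniform convexity, rather than on exact overlap volumes, while still extracting the sharp constant $\log(2/c_p)$. Concretely one must verify that the function of $\mathrm{vol}(\Lambda)$ occurring in the $(\bm v,Y)$-average is convex over the range of values that actually arise at fugacity $\lambda^\ast$, bound $\mathbb E[\mathrm{vol}(\Lambda)]$ consistently with that fugacity, use the crude estimate that at most $3^n$ pairwise-$2r$-far points fit in $B^n_{p,\bm k}(\bm v,2r)$ so that the local partition function has bounded degree, and keep all error terms uniform in $\bm k$ and in $S$. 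Everything else --- the passage to $S=B^n_{p,\bm k}(R)$ via Lemma \ref{xiajie}, the monotonicity reduction, and the Mecke identity itself --- is routine.
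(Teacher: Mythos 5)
Your overall architecture (monotonicity in $\lambda$, the spatial Markov property for the local region inside $B^n_{p,\bm k}(\bm v,2r_{p,\bm k,n})$, and a Jensen/self-consistency computation yielding the Lambert--W-type bound) is exactly the route the paper takes via Lemmas \ref{dizeng}, \ref{fvs}, \ref{xiajie2} and \ref{xiajie3}. But the geometric input you supply is the wrong quantity, and the consequence you draw from it is false as stated. Your overlap lemma controls ${\rm vol}\bigl(B^n_{p,\bm k}(\bm a,2r)\cap B^n_{p,\bm k}(\bm b,2r)\bigr)$ only when $d_{p,\bm k,n}(\bm a,\bm b)\ge 2r$; it is correct (it follows from inequality (\ref{budengshi4}) in the reverse sense, which also covers points in the unit ball rather than on the sphere), but it does not give ``a point placed in $\Lambda$ forbids only $\le c_p^n$ further volume of $\Lambda$.'' A point $\bm x\in\Lambda\subseteq B^n_{p,\bm k}(\bm v,2r)$ blocks $B^n_{p,\bm k}(\bm x,2r)\cap\Lambda$, and here $d_{p,\bm k,n}(\bm x,\bm v)\le 2r$ with no lower bound: if $\bm x$ is near $\bm v$ the blocked volume can be as large as ${\rm vol}(\Lambda)$ itself, up to $2^n$. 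So the ``dilute'' property that is supposed to drive the occupancy computation is not established by your step 2.

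What the computation actually needs is the paper's Lemma \ref{zhongyao}: for an \emph{arbitrary} measurable $S'\subseteq B^n_{p,\bm k}(2r)$ and $\bm u$ uniform in $S'$, $\mathbb{E}\bigl[{\rm vol}(B^n_{p,\bm k}(\bm u,2r)\cap S')\bigr]\le 2c_p^n$, which via inequality (\ref{dengshi3}) bounds $\mathbb{E}[{\rm vol}({\rm T})]$ and $\log Z_{{\rm T},p,\bm k}(\lambda)$ on the random local region. Proving this without any separation hypothesis is the genuinely delicate point: one symmetrizes over which of the two random points has larger norm (costing the factor $2$), reducing to ${\rm vol}\bigl(B^n_{p,\bm k}(\bm u,2r)\cap B^n_{p,\bm k}(\bm 0,\|\bm u\|_{p,\bm k,n})\bigr)$, and then a case analysis with a threshold $x_p$ is required because uniform convexity only yields a gain when all three of $\|\bm u\|$, $\|\bm x\|$, $\|\bm x-\bm u\|$ are bounded below; the remaining cases are handled by the trivial volume bound and the triangle inequality. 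Your sketch neither states this lemma nor indicates how a fixed-$\epsilon$ application of Proposition \ref{yizhitu} could replace it, and you yourself flag step 3 as the unresolved obstacle. As it stands, the proposal has the right scaffolding but is missing the one estimate that makes the factor-$n$ gain possible, so it does not constitute a proof.
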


\begin{remark}
The constant $c_p$ in Theorem \ref{main} is independent of $n$ and $\bm{k}$.
\end{remark}

Theorem \ref{zhuyaojieguo} follows immediately from Lemma \ref{xiajie} and Theorem \ref{main}.

\begin{lemma}\label{dizeng}
Let $S\subseteq\mathbb{R}^n$ be bounded, measurable, and of positive volume. Then the expected packing density $\alpha_{S, p, \bm{k}}(\lambda)$ is strictly increasing in $\lambda$.
\end{lemma}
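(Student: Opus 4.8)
The plan is to reduce the claim to showing that $\mathbb{E}_{S, p, \bm{k}, \lambda}|X|$ is strictly increasing in $\lambda$ on $(0, \infty)$, since $\alpha_{S, p, \bm{k}}(\lambda)$ is exactly this quantity divided by the fixed positive constant $\mathrm{vol}(S)$. Because $S$ is bounded, $Z_{S, p, \bm{k}}(\lambda) = \sum_{t \geq 0} \lambda^t \hat{Z}_{S, p, \bm{k}}(t)$ is a polynomial in $\lambda$, so there are no analytic subtleties and we may differentiate term by term.

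First I would record the standard ``susceptibility'' identity. Writing $N(\lambda) = \sum_t t\lambda^t \hat{Z}_{S, p, \bm{k}}(t)$ and $D(\lambda) = Z_{S, p, \bm{k}}(\lambda)$, equation (\ref{dengshi4}) gives $\mathbb{E}_{S, p, \bm{k}, \lambda}|X| = N(\lambda)/D(\lambda)$. Since $D'(\lambda) = N(\lambda)/\lambda$ and $N'(\lambda) = \frac{1}{\lambda}\sum_t t^2\lambda^t \hat{Z}_{S, p, \bm{k}}(t)$, the quotient rule yields
$$
\frac{d}{d\lambda}\,\mathbb{E}_{S, p, \bm{k}, \lambda}|X| = \frac{1}{\lambda}\left(\frac{\sum_t t^2\lambda^t \hat{Z}_{S, p, \bm{k}}(t)}{D(\lambda)} - \left(\frac{N(\lambda)}{D(\lambda)}\right)^2\right) = \frac{\mathrm{Var}_{S, p, \bm{k}, \lambda}(|X|)}{\lambda},
$$
where we used that $\mathbb{P}_{S, p, \bm{k}, \lambda}(|X| = t) = \lambda^t \hat{Z}_{S, p, \bm{k}}(t)/D(\lambda)$. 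In particular $\alpha_{S, p, \bm{k}}'(\lambda) = \mathrm{Var}_{S, p, \bm{k}, \lambda}(|X|)/(\lambda\,\mathrm{vol}(S)) \geq 0$.

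Then I would check that this variance is strictly positive for every $\lambda > 0$, which upgrades monotonicity to strict monotonicity. This holds because $|X|$ is a nondegenerate random variable: $\hat{Z}_{S, p, \bm{k}}(0) = 1 > 0$ and $\hat{Z}_{S, p, \bm{k}}(1) = \int_S 1\, d\bm{x} = \mathrm{vol}(S) > 0$, so for $\lambda > 0$ both $\mathbb{P}_{S, p, \bm{k}, \lambda}(|X| = 0)$ and $\mathbb{P}_{S, p, \bm{k}, \lambda}(|X| = 1)$ are strictly positive, hence $\mathrm{Var}_{S, p, \bm{k}, \lambda}(|X|) > 0$. Combined with the identity above, this proves $\alpha_{S, p, \bm{k}}$ is strictly increasing.

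There is no genuine obstacle here: the only points requiring care are the bookkeeping in the quotient-rule computation so that the second moment of $|X|$ appears in the right place, and the observation that the polynomial $Z_{S, p, \bm{k}}$ really does have both a nonzero constant term and a nonzero linear term, which is precisely what forces non-degeneracy. Everything else is immediate from the definitions introduced in this section.
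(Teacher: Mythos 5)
Your proposal is correct and follows essentially the same route as the paper: differentiate the expected density (equivalently $\lambda Z'/Z$ via equation (\ref{dengshi4})) and identify $\lambda\,\mathrm{vol}(S)\,\alpha_{S,p,\bm{k}}'(\lambda)$ with $\mathrm{Var}(|X|)$, which is positive; the paper gets there via the factorial moment $\mathbb{E}[|X|(|X|-1)]=\lambda^2 Z''/Z$ while you use the quotient rule on $N/D$, an equivalent computation. Your explicit check that $\hat{Z}_{S,p,\bm{k}}(0)=1$ and $\hat{Z}_{S,p,\bm{k}}(1)=\mathrm{vol}(S)>0$ force nondegeneracy of $|X|$ is a welcome detail the paper leaves implicit when asserting $\mathrm{Var}(|X|)>0$.
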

\begin{proof}
We use equation (\ref{dengshi4}) and calculate
\begin{equation*}
\begin{split}
\lambda\cdot{\rm vol}(S)\cdot\alpha_{S, p, \bm{k}}'(\lambda)&=\lambda\cdot\left(\frac{\lambda\cdot (Z_{S, p, \bm{k}}(\lambda))'}{Z_{S, p, \bm{k}}(\lambda)}\right)'\\
&=\lambda\cdot\left(\frac{(Z_{S, p, \bm{k}}(\lambda))'+\lambda\cdot (Z_{S, p, \bm{k}}(\lambda))''}{Z_{S, p, \bm{k}}(\lambda)}-\frac{\lambda\cdot \left[(Z_{S, p, \bm{k}}(\lambda))'\right]^2}{\left(Z_{S, p, \bm{k}}(\lambda)\right)^2}\right).
\end{split}
\end{equation*}
Since
$$
\lambda\cdot\frac{(Z_{S, p, \bm{k}}(\lambda))'}{Z_{S, p, \bm{k}}(\lambda)}={\rm vol}(S)\cdot\alpha_{S, p, \bm{k}}=\mathbb{E}_{S, p, \bm{k}, \lambda}|X|
$$
and
$$
\lambda\cdot\frac{\lambda\cdot (Z_{S, p, \bm{k}}(\lambda))''}{Z_{S, p, \bm{k}}(\lambda)}=\lambda^2\cdot\frac{\sum_{t=2}^\infty t(t-1)\lambda^{t-2}\hat{Z}_{S, p, \bm{k}}(t)}{Z_{S, p, \bm{k}}(\lambda)}=\sum_{t=2}^\infty t(t-1)\mathbb{P}_{S, p, \bm{k}, \lambda}(|X|=t)=\mathbb{E}_{S, p, \bm{k}, \lambda}[|X|(|X|-1)],
$$
it follows that
\begin{equation}\label{fangcha}
\lambda\cdot{\rm vol}(S)\cdot\alpha_{S, p, \bm{k}}'(\lambda)=\mathbb{E}_{S, p, \bm{k}, \lambda}|X|+\mathbb{E}_{S, p, \bm{k}, \lambda}[|X|(|X|-1)]-\left(\mathbb{E}_{S, p, \bm{k}, \lambda}|X|\right)^2={\rm Var}(|X|)>0.
\end{equation}
Thus $\alpha_{S, p, \bm{k}}(\lambda)$ is strictly increasing.
\end{proof}

Let $\text{FV}_{S, p, \bm{k}}(\lambda)$ denote the expected \textit{free volume} of the hard superball model; that is, the expected fraction of the volume of $S$ containing points that are at distance at least $2r_{p, \bm{k}, n}$ from the nearest center. That is,
$$
\text{FV}_{S, p, \bm{k}}(\lambda)=\frac{\mathbb{E}_{S, p, \bm{k}, \lambda}\left[\text{vol}(\{\bm{y}\in S:d_{p, \bm{k}, n}(\bm{y}, \bm{x})\geq2r_{p, \bm{k}, n}\text{ for every }\bm{x}\in X\})\right]}{\text{vol}(S)}.
$$

\begin{lemma}\label{fvs}
Let $S\subseteq\mathbb{R}^n$ be bounded, measurable, and of positive volume. Then
$$
\alpha_{S, p, \bm{k}}(\lambda)=\lambda\cdot{\rm FV}_{S, p, \bm{k}}(\lambda).
$$
\end{lemma}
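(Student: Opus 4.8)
The plan is to expand both sides of the claimed identity as power series in $\lambda$ and match them term by term. By equation (\ref{dengshi4}) we have $\alpha_{S, p, \bm{k}}(\lambda)=\frac{\lambda}{\mathrm{vol}(S)}\cdot\frac{(Z_{S, p, \bm{k}}(\lambda))'}{Z_{S, p, \bm{k}}(\lambda)}$, so it suffices to prove
\[
\mathrm{FV}_{S, p, \bm{k}}(\lambda)=\frac{1}{\mathrm{vol}(S)}\cdot\frac{(Z_{S, p, \bm{k}}(\lambda))'}{Z_{S, p, \bm{k}}(\lambda)}.
\]

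First I would rewrite $(Z_{S, p, \bm{k}}(\lambda))'=\sum_{t\ge 0}(t+1)\lambda^{t}\hat Z_{S, p, \bm{k}}(t+1)$ and unpack $\hat Z_{S, p, \bm{k}}(t+1)$ via definition (\ref{defzhat}). Since the event $\mathcal D_{p, \bm{k}}(\bm{x}_1,\ldots,\bm{x}_{t+1})$ is the intersection of $\mathcal D_{p, \bm{k}}(\bm{x}_1,\ldots,\bm{x}_t)$ with the condition $d_{p, \bm{k}, n}(\bm{x}_{t+1},\bm{x}_i)\ge 2r_{p, \bm{k}, n}$ for every $i\le t$, integrating out the variable $\bm{x}_{t+1}$ first (Fubini) gives
\[
(t+1)\hat Z_{S, p, \bm{k}}(t+1)=\frac{1}{t!}\int_{S^{t}}\mathbf{1}_{\mathcal D_{p, \bm{k}}(\bm{x}_1,\ldots,\bm{x}_t)}\,\mathrm{vol}\bigl(F(\bm{x}_1,\ldots,\bm{x}_t)\bigr)\,d\bm{x}_1\cdots d\bm{x}_t,
\]
where $F(\bm{x}_1,\ldots,\bm{x}_t)=\{\bm{y}\in S: d_{p, \bm{k}, n}(\bm{y},\bm{x}_i)\ge 2r_{p, \bm{k}, n}\text{ for all }i\le t\}$ is the free region left by the centers $\bm{x}_1,\ldots,\bm{x}_t$. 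Multiplying by $\lambda^t$ and summing yields $(Z_{S, p, \bm{k}}(\lambda))'=\sum_{t\ge 0}\frac{\lambda^t}{t!}\int_{S^t}\mathbf{1}_{\mathcal D_{p, \bm{k}}}\,\mathrm{vol}(F)\,d\bm{x}_1\cdots d\bm{x}_t$.

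Next I would expand $\mathrm{FV}_{S, p, \bm{k}}(\lambda)$ directly from the law of the grand canonical model: $\mathbb P_{S, p, \bm{k}, \lambda}(|X|=t)=\lambda^t\hat Z_{S, p, \bm{k}}(t)/Z_{S, p, \bm{k}}(\lambda)$, and, conditioned on $|X|=t$, the ordered $t$-tuple of centers has density $\mathbf{1}_{\mathcal D_{p, \bm{k}}}/\bigl(t!\,\hat Z_{S, p, \bm{k}}(t)\bigr)$ on $S^t$ (which integrates to $1$). Applying this to the symmetric functional $\bm{x}\mapsto\mathrm{vol}(F(\bm{x}_1,\ldots,\bm{x}_t))$ yields
\[
\mathbb E_{S, p, \bm{k}, \lambda}\Bigl[\mathrm{vol}\bigl(\{\bm{y}\in S: d_{p, \bm{k}, n}(\bm{y},\bm{x})\ge 2r_{p, \bm{k}, n}\ \forall\,\bm{x}\in X\}\bigr)\Bigr]
=\sum_{t\ge 0}\frac{\lambda^t}{Z_{S, p, \bm{k}}(\lambda)\,t!}\int_{S^t}\mathbf{1}_{\mathcal D_{p, \bm{k}}}\,\mathrm{vol}(F)\,d\bm{x}_1\cdots d\bm{x}_t,
\]
which by the previous paragraph equals $(Z_{S, p, \bm{k}}(\lambda))'/Z_{S, p, \bm{k}}(\lambda)$. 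Dividing by $\mathrm{vol}(S)$ proves the displayed identity, and multiplying by $\lambda$ and comparing with (\ref{dengshi4}) gives $\alpha_{S, p, \bm{k}}(\lambda)=\lambda\cdot\mathrm{FV}_{S, p, \bm{k}}(\lambda)$. The $t=0$ term is consistent, since $\hat Z_{S, p, \bm{k}}(0)=1$ and the free region is all of $S$.

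The one place requiring care is the combinatorial bookkeeping: the passage from the unordered configuration $X$ to a symmetric integral over ordered tuples in $S^t$ must introduce exactly the factor $1/t!$ that cancels the $1/t!$ in the definition of $\hat Z_{S, p, \bm{k}}(t)$, and the $\lambda$-weights must telescope correctly after differentiating $Z_{S, p, \bm{k}}$. Since $S$ is bounded, $Z_{S, p, \bm{k}}(\lambda)$ is a polynomial, so all the sums involved are finite and no convergence issue arises.
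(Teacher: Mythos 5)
Your proposal is correct and follows essentially the same route as the paper: both arguments match the power series in $\lambda$ term by term, using Fubini to integrate out one center and identifying the inner integral $\int_S\mathbf{1}_{\mathcal{D}_{p,\bm{k}}(\bm{x}_0,\bm{x}_1,\ldots,\bm{x}_t)}d\bm{x}_0$ with the free volume left by $\bm{x}_1,\ldots,\bm{x}_t$; your detour through $(Z_{S,p,\bm{k}}(\lambda))'$ and equation (\ref{dengshi4}) is just a reorganization of the paper's direct expansion of $\mathbb{E}_{S,p,\bm{k},\lambda}|X|$ and of the conditional expectation of the free volume.
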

\begin{proof}
We use the definitions of $\alpha_{S, p, \bm{k}}(\lambda)$ and ${\rm FV}_{S, p, \bm{k}}(\lambda)$.
\begin{equation*}
\begin{split}
\alpha_{S, p, \bm{k}}(\lambda)&=\frac{\mathbb{E}_{S, p, \bm{k}, \lambda}|X|}{\text{vol}(S)}\\
&=\frac{1}{\text{vol}(S)}\sum_{t=0}^\infty (t+1)\cdot\mathbb{P}_{S, p, \bm{k}, \lambda}(|X|=t+1)\\
&=\frac{1}{\text{vol}(S)Z_{S, p, \bm{k}}(\lambda)}\sum_{t=0}^\infty (t+1)\cdot\int_{S^{t+1}}\frac{\lambda^{t+1}}{(t+1)!}\textbf{1}_{\mathcal{D}_{p, \bm{k}}(\bm{x}_0, \bm{x}_1, \bm{x}_2, \ldots, \bm{x}_t)}d\bm{x}_0d\bm{x}_1\cdots d\bm{x}_t\\
&=\frac{\lambda}{\text{vol}(S)Z_{S, p, \bm{k}}(\lambda)}\sum_{t=0}^\infty\int_{S^{t+1}}\frac{\lambda^{t}}{t!}\textbf{1}_{\mathcal{D}_{p, \bm{k}}(\bm{x}_0, \bm{x}_1, \bm{x}_2, \ldots, \bm{x}_t)}d\bm{x}_0d\bm{x}_1\cdots d\bm{x}_t\\
&=\frac{\lambda}{\text{vol}(S)Z_{S, p, \bm{k}}(\lambda)}\sum_{t=0}^\infty\int_{S^{t}}\frac{\lambda^{t}}{t!}\left(\int_{S}\textbf{1}_{\mathcal{D}_{p, \bm{k}}(\bm{x}_0, \bm{x}_1, \bm{x}_2, \ldots, \bm{x}_t)} d\bm{x}_0\right)d\bm{x}_1\cdots d\bm{x}_t.
\end{split}
\end{equation*}
Let $Y=\text{vol}(\{\bm{y}\in S:d_{p, \bm{k}, n}(\bm{y}, \bm{x})\geq2r_{p, \bm{k}, n}\text{ for every }\bm{x}\in X\})$. Then
$$
\mathbb{E}_{S, p, \bm{k}, \lambda}\left(Y|X=\{\bm{x}_1, \bm{x}_2, \ldots, \bm{x}_t\}\right)=\int_{S}\textbf{1}_{\mathcal{D}_{p, \bm{k}}(\bm{x}_0, \bm{x}_1, \bm{x}_2, \ldots, \bm{x}_t)} d\bm{x}_0.
$$
So
\begin{equation*}
\begin{split}
&\lambda\cdot\text{FV}_{S, p, \bm{k}}(\lambda)\\
=&\frac{\lambda}{\text{vol}(S)}\cdot\mathbb{E}_{S, p, \bm{k}, \lambda}\left(Y\right)\\
=&\frac{\lambda}{\text{vol}(S)}\cdot\mathbb{E}_{S, p, \bm{k}, \lambda}\left[\mathbb{E}_{S, p, \bm{k}, \lambda}\left(Y|X=\{\bm{x}_1, \bm{x}_2, \ldots, \bm{x}_t\}\right)\right]\\
=&\frac{\lambda}{\text{vol}(S)}\sum_{t=0}^\infty\frac{\lambda^{t}}{Z_{S, p, \bm{k}}(\lambda)}\frac{1}{t!}\int_{S^{t}}\left(\int_{S}\textbf{1}_{\mathcal{D}_{p, \bm{k}}(\bm{x}_0, \bm{x}_1, \bm{x}_2, \ldots, \bm{x}_t)} d\bm{x}_0\right)\textbf{1}_{\mathcal{D}_{p, \bm{k}}(\bm{x}_1, \bm{x}_2, \ldots, \bm{x}_t)}d\bm{x}_1\cdots d\bm{x}_t\\
=&\frac{\lambda}{\text{vol}(S)Z_{S, p, \bm{k}}(\lambda)}\sum_{t=0}^\infty\int_{S^{t}}\frac{\lambda^{t}}{t!}\left(\int_{S}\textbf{1}_{\mathcal{D}_{p, \bm{k}}(\bm{x}_0, \bm{x}_1, \bm{x}_2, \ldots, \bm{x}_t)} d\bm{x}_0\right)d\bm{x}_1\cdots d\bm{x}_t\\
=&\alpha_{S, p, \bm{k}}(\lambda),
\end{split}
\end{equation*}
where for $t=0$, $\textbf{1}_{\mathcal{D}_{p, \bm{k}}(\bm{x}_1, \bm{x}_2, \ldots, \bm{x}_t)}\equiv1$ (since $\hat{Z}_{S, p, \bm{k}}(0)=1$).
\end{proof}

Now consider the following two-part experiment: sample a configuration of centers $X$ from the hard superball model on $S$ at fugacity $\lambda$ and independently choose a point $\bm{v}$ from $S$. We define the random set
$$
\text{T}=\{\bm{x}\in B^n_{p, \bm{k}}(\bm{v}, 2r_{p, \bm{k}, n})\cap S:d_{p, \bm{k}, n}(\bm{x}, \bm{y})\geq2r_{p, \bm{k}, n}\text{ for every }\bm{y}\in X\cap B^n_{p, \bm{k}}(\bm{v}, 2r_{p, \bm{k}, n})^c\}.
$$
That is, $\text{T}$ is the set of all points of $S$ in $B^n_{p, \bm{k}}(\bm{v}, 2r_{p, \bm{k}, n})$ that are not blocked from being a center by a center outside of $B^n_{p, \bm{k}}(\bm{v}, 2r_{p, \bm{k}, n})$.

\begin{lemma}\label{xiajie2}
Let $S\subseteq\mathbb{R}^n$ be bounded, measurable, and of positive volume. Then
\begin{equation}\label{qwer1}
\alpha_{S, p, \bm{k}}(\lambda)=\lambda\cdot\mathbb{E}\left[\frac{1}{Z_{{\rm T}, p, \bm{k}}(\lambda)}\right]
\end{equation}
and
\begin{equation}
\alpha_{S, p, \bm{k}}(\lambda)\geq2^{-n}\cdot\mathbb{E}\left[\frac{\lambda\cdot(Z_{{\rm T}, p, \bm{k}}(\lambda))'}{Z_{{\rm T}, p, \bm{k}}(\lambda)}\right],
\end{equation}
where both expectations are with respect to the random set {\rm T} generated by the two-part experiment defined above.
\end{lemma}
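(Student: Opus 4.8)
The plan is to derive both formulas from the identity $\alpha_{S,p,\bm{k}}(\lambda)=\lambda\cdot{\rm FV}_{S,p,\bm{k}}(\lambda)$ of Lemma~\ref{fvs} together with the spatial Markov property of the grand canonical hard superball model. First I would reinterpret the free volume probabilistically: since in the two-part experiment $\bm{v}$ is drawn uniformly from $S$ independently of $X$, Fubini's theorem applied to the definition of ${\rm FV}_{S,p,\bm{k}}$ gives
\[
{\rm FV}_{S,p,\bm{k}}(\lambda)=\mathbb{P}\big(d_{p,\bm{k},n}(\bm{v},\bm{x})\ge 2r_{p,\bm{k},n}\text{ for every }\bm{x}\in X\big)=\mathbb{P}\big(X\cap B^n_{p,\bm{k}}(\bm{v},2r_{p,\bm{k},n})=\emptyset\big);
\]
that is, the expected free volume equals the probability that $\bm{v}$ itself could be added to $X$ as a new center.

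Next I would condition on the pair $(\bm{v},X_{\mathrm{out}})$, where $X_{\mathrm{out}}:=X\cap B^n_{p,\bm{k}}(\bm{v},2r_{p,\bm{k},n})^{c}$. By construction ${\rm T}$ is a deterministic function of $(\bm{v},X_{\mathrm{out}})$, and the spatial Markov property of the hard-core Gibbs measure says that, given $(\bm{v},X_{\mathrm{out}})$, the conditional law of $X_{\mathrm{in}}:=X\cap B^n_{p,\bm{k}}(\bm{v},2r_{p,\bm{k},n})$ is exactly the grand canonical hard superball model on ${\rm T}$ at fugacity $\lambda$: a point of $B^n_{p,\bm{k}}(\bm{v},2r_{p,\bm{k},n})\cap S$ is admissible as an interior center precisely when it has $\ell_{p,\bm{k}}$-distance at least $2r_{p,\bm{k},n}$ from every point of $X_{\mathrm{out}}$ (equivalently, when it lies in ${\rm T}$) and from every other interior center, and each admissible configuration gets weight proportional to $\lambda$ raised to its cardinality. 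Since $\hat{Z}_{{\rm T},p,\bm{k}}(0)=1$, this yields $\mathbb{P}(X_{\mathrm{in}}=\emptyset\mid\bm{v},X_{\mathrm{out}})=1/Z_{{\rm T},p,\bm{k}}(\lambda)$; taking expectations and invoking Lemma~\ref{fvs} gives (\ref{qwer1}). For the conditional first moment, the computation (\ref{dengshi4}) applied with $S$ replaced by ${\rm T}$ gives $\mathbb{E}[\,|X_{\mathrm{in}}|\mid\bm{v},X_{\mathrm{out}}\,]=\lambda\cdot (Z_{{\rm T},p,\bm{k}}(\lambda))'/Z_{{\rm T},p,\bm{k}}(\lambda)$, and hence $\mathbb{E}\big[\lambda\cdot (Z_{{\rm T},p,\bm{k}}(\lambda))'/Z_{{\rm T},p,\bm{k}}(\lambda)\big]=\mathbb{E}|X_{\mathrm{in}}|=\mathbb{E}\big|X\cap B^n_{p,\bm{k}}(\bm{v},2r_{p,\bm{k},n})\big|$.

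It then remains only to bound this last quantity by $2^{n}\alpha_{S,p,\bm{k}}(\lambda)$. Integrating first over $\bm{v}\in S$ and using the symmetry $\bm{x}\in B^n_{p,\bm{k}}(\bm{v},2r_{p,\bm{k},n})\Leftrightarrow\bm{v}\in B^n_{p,\bm{k}}(\bm{x},2r_{p,\bm{k},n})$, I get
\[
\mathbb{E}\big|X\cap B^n_{p,\bm{k}}(\bm{v},2r_{p,\bm{k},n})\big|=\frac{1}{{\rm vol}(S)}\,\mathbb{E}_{S,p,\bm{k},\lambda}\!\left[\sum_{\bm{x}\in X}{\rm vol}\big(B^n_{p,\bm{k}}(\bm{x},2r_{p,\bm{k},n})\cap S\big)\right]\le\frac{2^{n}}{{\rm vol}(S)}\,\mathbb{E}_{S,p,\bm{k},\lambda}|X|=2^{n}\alpha_{S,p,\bm{k}}(\lambda),
\]
using that a superball of radius $2r_{p,\bm{k},n}$ has volume $2^{n}$ because a superball of radius $r_{p,\bm{k},n}$ has volume $1$. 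Rearranging gives the second inequality.

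The only delicate step is the spatial Markov property, i.e.\ that conditioning the grand canonical model on the exterior configuration reproduces exactly the hard superball model on ${\rm T}$. Rather than appealing to general Gibbs-measure machinery, I expect to prove it by directly expanding the integral defining $\hat{Z}_{S,p,\bm{k}}(t)$: partition each admissible $t$-tuple according to which of its points lie inside $B^n_{p,\bm{k}}(\bm{v},2r_{p,\bm{k},n})$ and which lie outside, perform the integration over the inside points for a fixed outside part (whose admissible locations are exactly ${\rm T}$), and recognize the resulting sum as $Z_{{\rm T},p,\bm{k}}(\lambda)$, entirely in the spirit of the proof of Lemma~\ref{fvs}. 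This is bookkeeping; everything else is Fubini and the volume normalization.
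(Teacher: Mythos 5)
Your proposal is correct and follows essentially the same route as the paper: equation (\ref{qwer1}) via Lemma \ref{fvs} plus the spatial Markov property (the event $X\cap B^n_{p,\bm{k}}(\bm{v},2r_{p,\bm{k},n})=\emptyset$ having conditional probability $1/Z_{{\rm T},p,\bm{k}}(\lambda)$), and the second inequality via the bound $\mathbb{P}(\bm{x}\in B^n_{p,\bm{k}}(\bm{v},2r_{p,\bm{k},n}))\leq 2^n/{\rm vol}(S)$ together with the conditional first-moment identity (\ref{dengshi4}) applied on ${\rm T}$. The only difference is that you sketch a direct verification of the spatial Markov property, which the paper simply asserts; that sketch is sound and routine.
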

\begin{proof}
Using Lemma \ref{fvs}, we have
\begin{equation*}
\begin{split}
\alpha_{S, p, \bm{k}}(\lambda)&=\lambda\cdot{\rm FV}_{S, p, \bm{k}}(\lambda)\\
&=\frac{\lambda}{{\rm vol}(S)}\cdot\int_S\mathbb{P}[d_{p, \bm{k}, n}(\bm{x}, \bm{y})\geq2r_{p, \bm{k}, n}, \forall\bm{x}\in X]d\bm{y}\\
&=\lambda\cdot\mathbb{E}(\mathbf{1}_{{\rm T}\cap X=\emptyset})\\
&=\lambda\cdot\mathbb{E}\left[\frac{1}{Z_{{\rm T}, p, \bm{k}}(\lambda)}\right],
\end{split}
\end{equation*}
where the last equation uses the spatial Markov property of the hard superball model: conditioned on $X\cap B^n_{p, \bm{k}}(\bm{v}, 2r_{p, \bm{k}, n})^c$, the distribution of $X\cap B^n_{p, \bm{k}}(\bm{v}, 2r_{p, \bm{k}, n})$ is exactly that of the hard superball model on the set ${\rm T}$.

For every $\bm{x}\in S$, $\mathbb{P}(\bm{x}\in B^n_{p, \bm{k}}(\bm{v}, 2r_{p, \bm{k}, n}))={\rm vol}(B^n_{p, \bm{k}}(\bm{v}, 2r_{p, \bm{k}, n})\cap S)/{\rm vol}(S)\leq2^n/{\rm vol}(S)$.
So
\begin{equation*}
\begin{split}
\alpha_{S, p, \bm{k}}(\lambda)&=\frac{\mathbb{E}_{S, p, \bm{k}, \lambda}|X|}{\text{vol}(S)}\\
&\geq2^{-n}\mathbb{E}(|X\cap B^n_{p, \bm{k}}(\bm{v}, 2r_{p, \bm{k}, n})|)\\
&=2^{-n}\mathbb{E}(\alpha_{{\rm T}, p, \bm{k}}(\lambda)\cdot{\rm vol(T)})\\
&=2^{-n}\cdot\mathbb{E}\left[\frac{\lambda\cdot(Z_{{\rm T}, p, \bm{k}}(\lambda))'}{Z_{{\rm T}, p, \bm{k}}(\lambda)}\right].
\end{split}
\end{equation*}

\end{proof}

\begin{lemma}\label{xiajie3}
Let $S\subseteq\mathbb{R}^n$ be bounded and measurable. Then
\begin{equation}\label{dengshi8}
\log Z_{S, p, \bm{k}}(\lambda)\leq\lambda\cdot{\rm vol}(S)
\end{equation}
and if in addition $S$ is of positive volume, then
\begin{equation}\label{dengshi3}
\alpha_{S, p, \bm{k}}(\lambda)\geq\lambda\cdot e^{-\lambda\cdot\mathbb{E}\left[\rm vol(T)\right]}.
\end{equation}
\end{lemma}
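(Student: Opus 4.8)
The plan is to get inequality (\ref{dengshi8}) from the trivial volume bound on the partition function, and then to obtain (\ref{dengshi3}) by combining (\ref{dengshi8}) (applied to the random region $\mathrm{T}$) with the identity (\ref{qwer1}) from Lemma \ref{xiajie2} and Jensen's inequality.

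First I would prove (\ref{dengshi8}). Since $\mathbf{1}_{\mathcal{D}_{p, \bm{k}}(\bm{x}_1, \ldots, \bm{x}_t)}\leq1$ pointwise, the definition (\ref{defzhat}) of $\hat{Z}_{S, p, \bm{k}}(t)$ gives $\hat{Z}_{S, p, \bm{k}}(t)\leq\frac{1}{t!}\mathrm{vol}(S)^t$ for every $t\geq0$. Summing the series defining $Z_{S, p, \bm{k}}(\lambda)$ termwise yields
$$
Z_{S, p, \bm{k}}(\lambda)=\sum_{t=0}^{\infty}\lambda^t\hat{Z}_{S, p, \bm{k}}(t)\leq\sum_{t=0}^{\infty}\frac{(\lambda\cdot\mathrm{vol}(S))^t}{t!}=e^{\lambda\cdot\mathrm{vol}(S)},
$$
and taking logarithms proves (\ref{dengshi8}). (When $\mathrm{vol}(S)=0$ only the $t=0$ term survives and $Z_{S, p, \bm{k}}(\lambda)=1$, so the bound still holds; this remark matters because $\mathrm{T}$ can a priori be a null set.)

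Next I would deduce (\ref{dengshi3}). The random set $\mathrm{T}$ is contained in the bounded measurable set $B^n_{p, \bm{k}}(\bm{v}, 2r_{p, \bm{k}, n})\cap S$, hence is itself bounded and measurable, so (\ref{dengshi8}) applies to it for each realization, giving $Z_{\mathrm{T}, p, \bm{k}}(\lambda)\leq e^{\lambda\cdot\mathrm{vol}(\mathrm{T})}$ and therefore $1/Z_{\mathrm{T}, p, \bm{k}}(\lambda)\geq e^{-\lambda\cdot\mathrm{vol}(\mathrm{T})}$. Feeding this into the identity (\ref{qwer1}) from Lemma \ref{xiajie2},
$$
\alpha_{S, p, \bm{k}}(\lambda)=\lambda\cdot\mathbb{E}\left[\frac{1}{Z_{\mathrm{T}, p, \bm{k}}(\lambda)}\right]\geq\lambda\cdot\mathbb{E}\left[e^{-\lambda\cdot\mathrm{vol}(\mathrm{T})}\right].
$$
Finally, since $x\mapsto e^{-\lambda x}$ is convex, Jensen's inequality gives $\mathbb{E}\left[e^{-\lambda\cdot\mathrm{vol}(\mathrm{T})}\right]\geq e^{-\lambda\cdot\mathbb{E}[\mathrm{vol}(\mathrm{T})]}$, which completes the proof of (\ref{dengshi3}).

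There is no real obstacle here: the argument is a short chain of the volume bound, the free-volume identity already established, and convexity. The only point requiring a moment's care is the degenerate case $\mathrm{vol}(\mathrm{T})=0$, which is handled by the observation above that (\ref{dengshi8}) holds trivially for null sets; and one should note that $\mathbb{E}[\mathrm{vol}(\mathrm{T})]$ is finite because $\mathrm{vol}(\mathrm{T})\leq\mathrm{vol}(B^n_{p, \bm{k}}(\bm{v}, 2r_{p, \bm{k}, n}))=2^n$, so the right-hand side of (\ref{dengshi3}) is a genuine positive lower bound.
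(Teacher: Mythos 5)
Your proposal is correct and follows essentially the same route as the paper: the volume bound on the indicator gives $Z_{S,p,\bm{k}}(\lambda)\leq e^{\lambda\cdot\mathrm{vol}(S)}$, and then the identity (\ref{qwer1}) together with Jensen's inequality yields (\ref{dengshi3}). The extra remarks on null sets and finiteness of $\mathbb{E}[\mathrm{vol}(\mathrm{T})]$ are harmless additions to the same argument.
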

\begin{proof}
By the definition of $Z_{S, p, \bm{k}}(\lambda)$, we have
\begin{equation*}
\begin{split}
Z_{S, p, \bm{k}}(\lambda)&=\sum_{t=0}^{\infty}\lambda^t\hat{Z}_{S, p, \bm{k}}(t)\\
&=\sum_{t=0}^{\infty}\frac{\lambda^t}{t!}\int_{S^t}\textbf{1}_{\mathcal{D}_{p, \bm{k}}(\bm{x}_1, \bm{x}_2, \ldots, \bm{x}_t)}d\bm{x}_1d\bm{x}_2\cdots d\bm{x}_t\\
&\leq\sum_{t=0}^{\infty}\frac{\lambda^t}{t!}\int_{S^t}d\bm{x}_1d\bm{x}_2\cdots d\bm{x}_t\\
&=\sum_{t=0}^{\infty}\frac{\lambda^t}{t!}({\rm vol}(S))^t\\
&=e^{\lambda\cdot{\rm vol}(S)}.
\end{split}
\end{equation*}
Take logarithm and we obtain inequality (\ref{dengshi8}).

For inequality (\ref{dengshi3}), we use equation (\ref{qwer1}) and inequality (\ref{dengshi8}). So
\begin{equation*}
\begin{split}
\alpha_{S, p, \bm{k}}(\lambda)&=\lambda\cdot\mathbb{E}\left[\frac{1}{Z_{{\rm T}, p, \bm{k}}(\lambda)}\right]\\
&\geq\lambda\cdot\mathbb{E}\left[e^{-\lambda\cdot{\rm vol(T)}}\right]\\
&\geq\lambda\cdot e^{-\lambda\cdot\mathbb{E}\left[{\rm vol(T)}\right]},
\end{split}
\end{equation*}
where the last inequality follows from Jensen's inequality.
\end{proof}

\begin{remark}
In \cite{MR3898718}, Lemmas \ref{dizeng}-\ref{xiajie3} are with respect to the packing of Euclidean balls. Indeed, these lemmas are valid for the packing of superballs defined above as well.
\end{remark}

Consider the function
$$
h(x)=\left(\frac{x}{4}+\frac{1}{2}-\frac{1}{x}\right)^q+\left(\frac{x+2}{4}\right)^q-1.
$$
Note that $h(x)$ is continuous for $x\geq1.5$ and $h(2)=1/2^q>0$, so there exists $x_p\in(1.5, 2)$ such that $h(x)>1/3^q$ for every $x\in[x_p, 2]$. We will use $x_p$ in the proof of the following lemma.

\begin{lemma}\label{zhongyao}
For every $p\in(1, 2]$, there exists a constant $c_p\in(0, 2)$ such that the following holds. For every $n\in\mathbb{N}$ and $\bm{k}=(k_1, k_2, \ldots, k_{m+1})$ with $0=k_1<k_2<\cdots<k_{m+1}=n$, let $S\subseteq B^n_{p, \bm{k}}(2r_{p, \bm{k}, n})$ be measurable. Then
\begin{equation}\label{xiaoyucp}
\mathbb{E}\left[{\rm vol}(B^n_{p, \bm{k}}(\bm{u}, 2r_{p, \bm{k}, n})\cap S)\right]\leq2\cdot c_p^n,
\end{equation}
where $\bm{u}$ is a uniformly chosen point in $S$. In particular,
\begin{equation}\label{dengshi2}
\alpha_{S, p, \bm{k}}\geq\lambda\cdot e^{-\lambda\cdot2\cdot c_p^n}.
\end{equation}
\end{lemma}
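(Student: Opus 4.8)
The plan is to deduce (\ref{dengshi2}) from (\ref{xiaoyucp}) in one line, so that the whole task reduces to (\ref{xiaoyucp}). Write $\rho:=2r_{p,\bm k,n}$, $q:=p/(p-1)\ge 2$, abbreviate $\|\cdot\|:=\|\cdot\|_{p,\bm k,n}$, and recall $\mathrm{vol}\bigl(B^n_{p,\bm k}(R)\bigr)=(2R/\rho)^n$, so $\mathrm{vol}\bigl(B^n_{p,\bm k}(\rho)\bigr)=2^n$ and $\mathrm{vol}\bigl(B^n_{p,\bm k}(\rho/2)\bigr)=1$; we may assume $\mathrm{vol}(S)>0$. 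Since in the two-part experiment $\mathrm T\subseteq B^n_{p,\bm k}(\bm v,\rho)\cap S$ always, and $\bm v$ is uniform on $S\subseteq B^n_{p,\bm k}(\rho)$, inequality (\ref{xiaoyucp}) applied with $\bm u=\bm v$ gives $\mathbb E[\mathrm{vol}(\mathrm T)]\le 2c_p^n$, and Lemma \ref{xiajie3} then yields $\alpha_{S,p,\bm k}(\lambda)\ge\lambda e^{-\lambda\mathbb E[\mathrm{vol}(\mathrm T)]}\ge\lambda e^{-2\lambda c_p^n}$, which is (\ref{dengshi2}). The engine behind (\ref{xiaoyucp}) is a midpoint estimate from uniform convexity: \emph{if $\|\bm x\|\le\rho$ and $\|\bm y\|\le\rho$ then $\|(\bm x+\bm y)/2\|^q+\|(\bm x-\bm y)/2\|^q\le\rho^q$}. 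This follows from the Clarkson-type inequality (\ref{budengshi5}) -- which for $1<p\le 2$ holds in the reverse sense, and which is valid on $(\mathbb R^n,\|\cdot\|_{p,\bm k,n})$ by the same proof (or by embedding $\mathbb R^n$ isometrically into $\ell_{p,\bm k'}$ for an extension $\bm k'$ of $\bm k$) -- applied to $\bm a=(\bm x+\bm y)/2$ and $\bm b=(\bm x-\bm y)/2$: it gives $\|\bm x\|^p+\|\bm y\|^p\ge 2\bigl(\|\bm a\|^q+\|\bm b\|^q\bigr)^{p-1}$, and the left side is $\le 2\rho^p$, whence $\|\bm a\|^q+\|\bm b\|^q\le\rho^{p/(p-1)}=\rho^q$.

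Set $I:=\iint_{S\times S}\mathbf{1}[\|\bm u-\bm w\|\le\rho]\,d\bm u\,d\bm w$, so that the left side of (\ref{xiaoyucp}) equals $I/\mathrm{vol}(S)$. Changing variables to $\bm m=(\bm u+\bm w)/2$, $\bm d=(\bm u-\bm w)/2$ (Jacobian $2^n$), $I=2^n\iint\mathbf{1}_S(\bm m+\bm d)\,\mathbf{1}_S(\bm m-\bm d)\,\mathbf{1}[\|\bm d\|\le\rho/2]\,d\bm m\,d\bm d$. On the support both $\bm m\pm\bm d$ lie in $B^n_{p,\bm k}(\rho)$, so the midpoint estimate forces $\|\bm m\|^q+\|\bm d\|^q\le\rho^q$; dropping only $\mathbf{1}_S(\bm m-\bm d)$, then substituting $\bm x=\bm m+\bm d$ in the $\bm m$-integral and using $\mathrm{vol}(S\cap B)\le\min\{\mathrm{vol}(S),\mathrm{vol}(B)\}$ together with $\mathrm{vol}\bigl(B^n_{p,\bm k}(\bm d,(\rho^q-\|\bm d\|^q)^{1/q})\bigr)=2^n(1-(\|\bm d\|/\rho)^q)^{n/q}$, I get
$$I\ \le\ 2^n\!\!\int_{\|\bm d\|\le\rho/2}\!\!\min\Bigl\{\,\mathrm{vol}(S),\ 2^n\bigl(1-(\|\bm d\|/\rho)^q\bigr)^{n/q}\Bigr\}\,d\bm d.$$
Passing to polar coordinates ($\bm d=(\rho/2)\bm{\delta}$, using $\mathrm{vol}(B^n_{p,\bm k}(\rho/2))=1$) converts this to
$$\frac{I}{\mathrm{vol}(S)}\ \le\ \frac{2^n n}{\mathrm{vol}(S)}\int_0^1\min\Bigl\{\,\mathrm{vol}(S),\ 2^n\bigl(1-(t/2)^q\bigr)^{n/q}\Bigr\}\,t^{n-1}\,dt.$$

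It remains to bound the right side by $2c_p^n$. Write $v:=\mathrm{vol}(S)\in(0,2^n]$. The function $f(t)=(1-(t/2)^q)^{n/q}t^{n-1}$ is nondecreasing on $[0,1]$ when $n\ge 2$: one computes $t\,\tfrac{d}{dt}\log f=n-1-\tfrac{ns}{1-s}$ with $s=(t/2)^q\le 2^{-q}\le\tfrac14\le\tfrac{n-1}{2n-1}$. Splitting the $\min$ at $t_0:=2\bigl(1-(v/2^n)^{q/n}\bigr)^{1/q}$ and using $\int_{t_0}^1 f\le(1-t_0)f(1)=(1-t_0)(1-2^{-q})^{n/q}$, the bound becomes $(2t_0)^n+\tfrac{2^{2n}n}{v}(1-t_0)(1-2^{-q})^{n/q}$ whenever $t_0\le 1$; when instead $v\le v^*:=\bigl(4/(2^q+1)^{1/q}\bigr)^n$ one simply uses $I/v\le v\le v^*$. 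A short computation (check $t_0^q<\tfrac1{2^q+1}$ and $\tfrac{1-2^{-q}}{1-(t_0/2)^q}<1-2^{-2q}$ when $v>v^*$) shows that for $v>v^*$ one has $(2t_0)^n<v^*$ and the second summand is at most $n\bigl(2(1-2^{-2q})^{1/q}\bigr)^n$. Therefore, in all cases,
$$\frac{I}{\mathrm{vol}(S)}\ \le\ (n+1)\,C_p^{\,n},\qquad C_p:=\max\Bigl\{\tfrac{4}{(2^q+1)^{1/q}},\ 2(1-2^{-2q})^{1/q}\Bigr\}.$$
Both entries are $<2$ for every $p>1$ (the first because $(2^q+1)^{1/q}>2$, the second obviously), so $C_p<2$. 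Picking any $c_p\in(C_p,2)$, the factor $n+1$ is absorbed for all large $n$; for the finitely many remaining $n$ the crude bound $I/\mathrm{vol}(S)\le\mathrm{vol}(S)\le 2^n$ together with $2^n\le 2c_p^n$ (enlarging $c_p$ if necessary, still below $2$) finishes the argument, and one checks $c_p$ may be taken independent of $n$ and $\bm k$.

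The genuine difficulty I expect is the final constant -- forcing $c_p<2$. The change of variables and the midpoint estimate are routine, and the double bound $\mathrm{vol}(S\cap B)\le\min\{\mathrm{vol}(S),\mathrm{vol}(B)\}$ is lossy; what keeps the constant below $2$ is the interplay of the exponent $q\ge 2$ delivered by uniform convexity -- which is exactly what makes the midpoint estimate strictly better than a triangle-inequality bound when $p>1$ -- with the cutoff $t\le 1$ in the one-dimensional integral. Two points need care: $C_p\to 2$ as $p\to 1^+$, so the method yields a constant only in the open interval $(0,2)$ and nothing uniform in $p$; and the regime in which $\mathrm{vol}(S)$ is exponentially small must be handled by the trivial bound rather than by the integral estimate.
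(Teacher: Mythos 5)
Your proposal is correct, but it reaches the key inequality (\ref{xiaoyucp}) by a genuinely different route than the paper. The paper symmetrizes the double integral with the indicator $\mathbf{1}_{\|\bm{v}\|\leq\|\bm{u}\|}$ (this is where its factor $2$ comes from), reduces to the worst-case two-ball intersection ${\rm vol}\bigl(B^n_{p,\bm{k}}(\bm{u},2r_{p,\bm{k},n})\cap B^n_{p,\bm{k}}(\bm{0},\|\bm{u}\|)\bigr)$, and uses the modulus of uniform convexity $\delta_p$ of Proposition \ref{yizhitu}, together with the auxiliary threshold $x_p$ defined through the function $h$, to show this intersection is contained in a single smaller superball $B^n_{p,\bm{k}}(\bm{u}/2,c_p'r_{p,\bm{k},n})$ with $c_p'<2$. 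You instead extract from the reverse Clarkson inequality (\ref{budengshi5}) the quantitative midpoint estimate $\|\bm{m}\|^q+\|\bm{d}\|^q\leq\rho^q$, pass to midpoint/half-difference coordinates, and evaluate an explicit radial integral, splitting the $\min$ at $t_0$ and treating exponentially small ${\rm vol}(S)$ by the trivial bound. Your approach buys explicit constants $C_p=\max\{4/(2^q+1)^{1/q},\,2(1-2^{-2q})^{1/q}\}$ and avoids both the containment-in-a-smaller-ball step and the somewhat delicate choice of $x_p$, at the cost of an extra factor $n+1$ that you absorb by enlarging the base; that absorption does work, but should be ordered carefully (first fix $c_p'\in(C_p,2)$ to get the threshold $n_1$ with $(n+1)C_p^n\leq2(c_p')^n$ for $n\geq n_1$, then take $c_p\in[c_p',2)$ with $\log2/\log(2/c_p)\geq n_1$ so that $2^n\leq2c_p^n$ covers the remaining $n$, including $n=1$ where your monotonicity claim for $f$ does not apply). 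The paper's argument yields the clean factor $2$ with nothing $n$-dependent to absorb, and its intermediate bound $\int_S\int_S\mathbf{1}_{d\leq2r_{p,\bm{k},n}}\leq2\,{\rm vol}(S)\,c_p^n$ is reused in Section \ref{dierzhong}; your estimate provides the same consequence. Your deduction of (\ref{dengshi2}) from (\ref{xiaoyucp}) and Lemma \ref{xiajie3} matches the paper's. One cosmetic slip: the quantity to check is $(t_0/2)^q<1/(2^q+1)$, not $t_0^q<1/(2^q+1)$; with that correction your conclusions $(2t_0)^n<v^*$ and the ratio bound are exactly right. As you note, both arguments give constants that degenerate as $p\to1^+$.
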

\begin{remark}
The constant $c_p$ in Theorem \ref{main} will be chosen as the same as the constant $c_p$ here. So we use the same symbol.
\end{remark}
\begin{proof}
Clearly, we may assume that $S$ has positive volume. We have
\begin{equation*}
\begin{split}
\mathbb{E}\left[{\rm vol}(B^n_{p, \bm{k}}(\bm{u}, 2r_{p, \bm{k}, n})\cap S)\right]&=\frac{1}{{\rm vol}(S)}\int_S\int_S\mathbf{1}_{d_{p, \bm{k}, n}(\bm{u}, \bm{v})\leq2r_{p, \bm{k}, n}}d\bm{v}d\bm{u}\\
&=\frac{2}{{\rm vol}(S)}\int_S\int_S\mathbf{1}_{d_{p, \bm{k}, n}(\bm{u}, \bm{v})\leq2r_{p, \bm{k}, n}}\cdot\mathbf{1}_{\|v\|_{p, \bm{k}, n}\leq\|u\|_{p, \bm{k}, n}}d\bm{v}d\bm{u}\\
&\leq2\max_{\bm{u}\in B^n_{p, \bm{k}}(2r_{p, \bm{k}, n})}\int_S\mathbf{1}_{d_{p, \bm{k}, n}(\bm{u}, \bm{v})\leq2r_{p, \bm{k}, n}}\cdot\mathbf{1}_{\|v\|_{p, \bm{k}, n}\leq\|u\|_{p, \bm{k}, n}}d\bm{v}\\
&\leq2\max_{\bm{u}\in B^n_{p, \bm{k}}(2r_{p, \bm{k}, n})}{\rm vol}\left(B^n_{p, \bm{k}}(\bm{u}, 2r_{p, \bm{k}, n})\cap B^n_{p, \bm{k}}(\bm{0}, \|\bm{u}\|_{p, \bm{k}, n})\right).
\end{split}
\end{equation*}
In order to prove inequality (\ref{xiaoyucp}), it suffices to prove that there exists a constant $c_p\in(0, 2)$ such that for every $n, \bm{k}$ and for every $\bm{u}\in B^n_{p, \bm{k}}(\bm{0}, 2r_{p, \bm{k}, n})$, we have
$$
{\rm vol}\left(B^n_{p, \bm{k}}(\bm{u}, 2r_{p, \bm{k}, n})\cap B^n_{p, \bm{k}}(\bm{0}, \|\bm{u}\|_{p, \bm{k}, n})\right)\leq c_p^n.
$$

For every $n, \bm{k}$ and for every $\bm{u}\in B^n_{p, \bm{k}}(\bm{0}, 2r_{p, \bm{k}, n})$, if $\|\bm{u}\|_{p, \bm{k}, n}\leq x_p\cdot r_{p, \bm{k}, n}$, then
$$
{\rm vol}\left(B^n_{p, \bm{k}}(\bm{u}, 2r_{p, \bm{k}, n})\cap B^n_{p, \bm{k}}(\bm{0}, \|\bm{u}\|_{p, \bm{k}, n})\right)\leq {\rm vol}\left(B^n_{p, \bm{k}}(\bm{0}, \|\bm{u}\|_{p, \bm{k}, n})\right)\leq x_p^n.
$$

Next we consider the case that $\|\bm{u}\|_{p, \bm{k}, n}\geq x_p\cdot r_{p, \bm{k}, n}$.
For every $\bm{x}\in B^n_{p, \bm{k}}(\bm{u}, 2r_{p, \bm{k}, n})\cap B^n_{p, \bm{k}}(\bm{0}, \|\bm{u}\|_{p, \bm{k}, n})$, we have $\|\bm{x}-\bm{u}\|_{p, \bm{k}, n}\leq2r_{p, \bm{k}, n}$ and $\|\bm{x}\|_{p, \bm{k}, n}\leq\|\bm{u}\|_{p, \bm{k}, n}\leq2r_{p, \bm{k}, n}$. If $\|\bm{x}-\bm{u}\|_{p, \bm{k}, n}\leq x_p\cdot r_{p, \bm{k}, n}$ or $\|\bm{x}\|_{p, \bm{k}, n}\leq x_p\cdot r_{p, \bm{k}, n}$, then by properties of a norm (the homogeneity condition and the triangle inequality), we have
\begin{equation}\label{qingkuang1}
\left\|\bm{x}-\frac{1}{2}\bm{u}\right\|_{p, \bm{k}, n}=\frac{1}{2}\left\|2\bm{x}-\bm{u}\right\|_{p, \bm{k}, n}\leq\frac{1}{2}\left(\left\|\bm{x}-\bm{u}\right\|_{p, \bm{k}, n}+\left\|\bm{x}\right\|_{p, \bm{k}, n}\right)\leq\frac{x_p+2}{2}r_{p, \bm{k}, n}.
\end{equation}

Now suppose that $\|\bm{x}-\bm{u}\|_{p, \bm{k}, n}\geq x_p\cdot r_{p, \bm{k}, n}$ and $\|\bm{x}\|_{p, \bm{k}, n}\geq x_p\cdot r_{p, \bm{k}, n}$. Fix $1<p\leq2$. Let $\delta_{p}(\epsilon)=1-\left(1-\left(\frac{\epsilon}{2}\right)^q\right)^{1/q}$. By Proposition \ref{yizhitu}, for every $\bm{k}=(k_1, k_2, \ldots)$ and for every $\bm{x}, \bm{y}\in\ell_{p, \bm{k}}$, if $\|\bm{x}\|_{p, \bm{k}}=\|\bm{y}\|_{p, \bm{k}}=1$ and $\|\bm{x}-\bm{y}\|_{p, \bm{k}}\geq\epsilon$, then $\left\|\frac{\bm{x}+\bm{y}}{2}\right\|_{p, \bm{k}}\leq1-\delta_p(\epsilon)$. For every $n\in\mathbb{N}$ and $\bm{k}=(k_1, k_2, \ldots, k_{m+1})$ with $0=k_1<k_2<\cdots<k_{m+1}=n$, we view a point $\bm{x}\in\mathbb{R}^n$ as a point in $\ell_{p, \tilde{\bm{k}}}$ by adding zeros after the $n$-th coordinate of $\bm{x}$ and $\tilde{\bm{k}}=(k_1, k_2, \ldots, k_{m+1}, n+1, n+2, \ldots)$, so $\|\bm{x}\|_{p, \bm{k}, n}=\|\bm{x}\|_{p, \tilde{\bm{k}}}$. In the rest of proof, we use norm $\|\cdot\|_{p, \bm{k}}$ instead of $\|\cdot\|_{p, \bm{k}, n}$.

Let $\|\bm{x}-\bm{u}\|_{p, \bm{k}}=a$, $\|\bm{x}\|_{p, \bm{k}}=b$, and $\|\bm{u}\|_{p, \bm{k}}=c$. We have known that $x_p\cdot r_{p, \bm{k}, n}\leq a\leq2r_{p, \bm{k}, n}$ and $x_p\cdot r_{p, \bm{k}, n}\leq b\leq c\leq2r_{p, \bm{k}, n}$. Since $\left\|\frac{\bm{x}-\bm{u}}{a}\right\|_{p, \bm{k}}=\left\|\frac{\bm{x}}{b}\right\|_{p, \bm{k}}=1$ and
$$
\left\|\frac{\bm{x}-\bm{u}}{a}-\frac{\bm{x}}{b}\right\|_{p, \bm{k}}=\left\|\frac{-\bm{u}}{a}+\left(\frac{1}{a}-\frac{1}{b}\right)\bm{x}\right\|_{p, \bm{k}}\geq\left\|\frac{-\bm{u}}{a}\right\|_{p, \bm{k}}-\left\|\left(\frac{1}{a}-\frac{1}{b}\right)\bm{x}\right\|_{p, \bm{k}}=\frac{c}{a}-\left|\frac{b-a}{a}\right|\geq\frac{x_p}{2}-\frac{|b-a|}{a},
$$
it follows from the uniform convexity of $\ell_{p, \bm{k}}$ that
\begin{equation}\label{yizhijielun}
\left\|\frac{\frac{\bm{x}-\bm{u}}{a}+\frac{\bm{x}}{b}}{2}\right\|_{p, \bm{k}}\leq1-\delta_p(\epsilon_p),
\end{equation}
where $\epsilon_p:=\frac{x_p}{2}-\frac{|b-a|}{a}\geq \frac{x_p}{2}-\frac{2-x_p}{x_p}=1+\frac{x_p}{2}-\frac{2}{x_p}$. So $\delta_p(\epsilon_p)\geq\delta_p\left(1+\frac{x_p}{2}-\frac{2}{x_p}\right)$.
Multiplying by $a$ in the both sides of the inequality (\ref{yizhijielun}) and rearranging it, we have
$$
\left\|\frac{1+\frac{a}{b}}{2}\bm{x}-\frac{1}{2}\bm{u}\right\|_{p, \bm{k}}\leq a(1-\delta_p(\epsilon_p)).
$$
Therefore,
\begin{equation}\label{qingkuang2}
\begin{split}
\left\|\bm{x}-\frac{1}{2}\bm{u}\right\|_{p, \bm{k}}&=\left\|\frac{1+\frac{a}{b}}{2}\bm{x}-\frac{1}{2}\bm{u}+\frac{1-\frac{a}{b}}{2}\bm{x}\right\|_{p, \bm{k}}\\
&\leq\left\|\frac{1+\frac{a}{b}}{2}\bm{x}-\frac{1}{2}\bm{u}\right\|_{p, \bm{k}}+\left\|\frac{1-\frac{a}{b}}{2}\bm{x}\right\|_{p, \bm{k}}\\
&\leq a(1-\delta_p(\epsilon_p))+\frac{|b-a|}{2}\\
&\leq \left(2(1-\delta_p(\epsilon_p))+\frac{2-x_p}{2}\right)r_{p, \bm{k}, n}\\
&=\left[2-\left(2\delta_p(\epsilon_p)-\frac{2-x_p}{2}\right)\right]r_{p, \bm{k}, n}.
\end{split}
\end{equation}
Since
\begin{equation*}
\begin{split}
2\delta_p\left(\epsilon_p\right)-\frac{2-x_p}{2}&=2\left[1-\left(1-\left(\frac{\epsilon_p}{2}\right)^q\right)^{1/q}\right]-1+\frac{x_p}{2}\\
&=1-2\left(1-\left(\frac{\epsilon_p}{2}\right)^q\right)^{1/q}+\frac{x_p}{2}\\
&\geq1-2\left(1-\left(\frac{1+\frac{x_p}{2}-\frac{2}{x_p}}{2}\right)^q\right)^{1/q}+\frac{x_p}{2}\\
&=1-2\left(1-\left(\frac{1}{2}+\frac{x_p}{4}-\frac{1}{x_p}\right)^q\right)^{1/q}+\frac{x_p}{2}
\end{split}
\end{equation*}
and by the definition of $x_p$
$$
1-\left(\frac{1}{2}+\frac{x_p}{4}-\frac{1}{x_p}\right)^q\leq\left(\frac{x_p+2}{4}\right)^q-\frac{1}{3^q},
$$
it follows that
\begin{equation}\label{dayuling}
\begin{split}
2\delta_p\left(\epsilon_p\right)-\frac{2-x_p}{2}&\geq1-2\left(1-\left(\frac{1}{2}+\frac{x_p}{4}-\frac{1}{x_p}\right)^q\right)^{1/q}+\frac{x_p}{2}\\
&\geq1+\frac{x_p}{2}-\left(\left(1+\frac{x_p}{2}\right)^q-\frac{2^q}{3^q}\right)^{1/q}\\
&>0.
\end{split}
\end{equation}

Let $c'_p=\max\left\{\frac{x_p+2}{2}, 2-\left(2\delta_p(\epsilon_p)-\frac{2-x_p}{2}\right)\right\}$. By the inequality (\ref{dayuling}) we see that $c'_p<2$. By inequalities (\ref{qingkuang1}) and (\ref{qingkuang2}) we see that
$$
B^n_{p, \bm{k}}(\bm{u}, 2r_{p, \bm{k}, n})\cap B^n_{p, \bm{k}}(\bm{0}, \|\bm{u}\|_{p, \bm{k}, n})\subseteq B^n_{p, \bm{k}}(\bm{u}/2, c'_pr_{p, \bm{k}, n}).
$$
So
$$
{\rm vol}\left(B^n_{p, \bm{k}}(\bm{u}, 2r_{p, \bm{k}, n})\cap B^n_{p, \bm{k}}(\bm{0}, \|\bm{u}\|_{p, \bm{k}, n})\right)\leq{\rm vol}\left(B^n_{p, \bm{k}}(\bm{u}/2, c'_pr_{p, \bm{k}, n})\right)=(c'_p)^n.
$$
Let $c_p=\max\{x_p, c'_p\}$ and we are done.

The equation (\ref{dengshi2}) follows from inequalities (\ref{dengshi3}) and (\ref{xiaoyucp}).
\end{proof}

Now we are able to prove Theorem \ref{main}.

\begin{proof}[Proof of Theorem \ref{main}]
Let $S\subseteq\mathbb{R}^n$ be bounded, measurable, and of positive volume. Let $c_p$ be the constant in Lemma \ref{zhongyao} and $\alpha=\alpha_{S, p, \bm{k}}(\lambda)$. Then by Jensen's inequality, we have
$$
\alpha=\lambda\cdot\mathbb{E}\left[\frac{1}{Z_{{\rm T}, p, \bm{k}}(\lambda)}\right]\geq\lambda\cdot e^{-\mathbb{E}\log Z_{{\rm T}, p, \bm{k}}(\lambda)},
$$
where the above expectation is with respect to the two-part experiment in forming the random set ${\rm T}$ and the first equality follows from the equation (\ref{qwer1}).

On the other hand, we have
\begin{equation*}
\begin{split}
\alpha&\geq2^{-n}\cdot\mathbb{E}\left[\frac{\lambda\cdot Z'_{{\rm T},p, \bm{k}}(\lambda)}{Z_{{\rm T}, p, \bm{k}}(\lambda)}\right]\\
&=2^{-n}\cdot\mathbb{E}\left[{\rm vol(T)}\cdot\alpha_{{\rm T}, p, \bm{k}}(\lambda)\right]\\
&\geq2^{-n}\cdot\mathbb{E}\left[\lambda\cdot{\rm vol(T)}\cdot e^{-\lambda\cdot2\cdot c_p^n}\right]\\
&\geq2^{-n}\cdot\mathbb{E}\left[\log Z_{{\rm T}, p, \bm{k}}(\lambda)\cdot e^{-\lambda\cdot2\cdot c_p^n}\right]\\
&=2^{-n}\cdot e^{-\lambda\cdot2\cdot c_p^n}\cdot\mathbb{E}\left[\log Z_{{\rm T}, p, \bm{k}}(\lambda)\right].
\end{split}
\end{equation*}
Combining these two lower bounds and letting $z=\mathbb{E}\left[\log Z_{{\rm T}, p, \bm{k}}(\lambda)\right]$, we have
$$
\alpha\geq\inf_z\max\left\{\lambda\cdot e^{-z}, 2^{-n}\cdot e^{-\lambda\cdot2\cdot c_p^n}\cdot z\right\}.
$$
Since $\lambda\cdot e^{-z}$ is decreasing in $z$ and $2^{-n}\cdot e^{-\lambda\cdot2\cdot c_p^n}\cdot z$ is increasing in $z$, the infimum over $z$ of the maximum of the two expressions occurs when they are equal, i.e., $\alpha\geq\lambda e^{-z^*}$, where $z^*$ is the solution to
$$
\lambda\cdot e^{-z}=2^{-n}\cdot e^{-\lambda\cdot2\cdot c_p^n}\cdot z.
$$
In other words,
\begin{equation}\label{zxing}
z^*=W(\lambda2^{n}e^{2\lambda c_p^n}),
\end{equation}
where $W(x)$ is the Lambert-W function. For $x>0$, $w=W(x)$ is defined to be the unique solution to the equation $we^w=x$. So
$$
w=\log x-\log(\log x-\log w)=\log x-\log\log x-\log\left(1-\frac{\log w}{\log x}\right).
$$
As $x\rightarrow\infty$,
$$
W(x)=\log x-\log\log x+O\left(\frac{\log\log x}{\log x}\right).
$$

We take $\lambda=n^{-1}c_p^{-n}$. So $\lambda2^{n}e^{2\lambda c_p^n}=\frac{1}{n}\left(\frac{2}{c_p}\right)^ne^{2/n}\rightarrow\infty$ as $n\rightarrow\infty$. The equation (\ref{zxing}) becomes
\begin{equation}
\begin{split}
z^*&=W(\lambda2^{n}e^{2/n})\\
&=\log \lambda+n\log2-\log n-\log\log(2/c_p)+O(\log (\log n/n)).
\end{split}
\end{equation}
So
$$
\alpha\geq\lambda e^{-z^*}=\left(1+O(\log n/n)\right)\frac{\log(2/c_p)\cdot n}{2^n}.
$$
Since $\alpha$ is increasing in $\lambda$, this bound holds for every $\lambda\geq n^{-1}c_p^{-n}$, completing the proof.
\end{proof}

In the above proof, if we take $\lambda=n^{-1}c^{-n}$ for some $c\in[c_p, 2)$, then we have
\begin{equation*}
\alpha_{S, p, \bm{k}}(e^{-n\log c})=\alpha_{S, p, \bm{k}}(c^{-n})\geq\alpha_{S, p, \bm{k}}(n^{-1}c^{-n})\geq\left(1+o_n(1)\right)\frac{(\log2-\log c)\cdot n}{2^n},
\end{equation*}
i.e.,
\begin{equation}\label{alphaxiajie}
\alpha_{S, p, \bm{k}}(e^{-nt})\geq\left(1+o_n(1)\right)\frac{(\log2-t)\cdot n}{2^n}
\end{equation}
for $t\in[\log c_p, \log2)$.

\section{An alternate proof of Theorem \ref{zhuyaojieguo}}\label{dierzhong}
In this section, we give an alternate proof of Theorem \ref{zhuyaojieguo} without using the hard superball model. The idea of the proof comes from \cite[Section 6.1]{2021arXiv211211274K}. We fix $p\in(1, 2]$ and $\bm{k}$, and let $n$ be a sufficiently large number. For convenience, in this section, we simplify most of our symbols and variables: $B^n(R)=B^n(\bm{0}, R):=B^n_{p, \bm{k}}(R)=B^n_{p, \bm{k}}(\bm{0}, R), r_{n}:=r_{p, \bm{k}, n}, B^n(\bm{x}, r_{n}):=B^n_{p, \bm{k}}(\bm{x}, r_{p, \bm{k}, n}), \|\cdot\|:=\|\cdot\|_{p, \bm{k}, n}$, and $d(\cdot, \cdot):=d_{p, \bm{k}, n}(\cdot, \cdot)$.

Let $B^n(R)$ be the superball centered at $\bm{0}$ with radius $R$. Consider the packings in $B^n(R)$ using $B^n(\bm{x}, r_{n})$. Let $\epsilon$ be a small positive real number (we will determine $\epsilon$ later), ${\rm C}_\epsilon:=[0, \epsilon]^n$ be a basic cube and $\tilde{L}_\epsilon:=(\epsilon\mathbb{Z})^n$ be a lattice. ${\rm C}_\epsilon+\tilde{L}_\epsilon$ tiles the whole space, and it also gives a partition of $B^n(R)$. Let $L_\epsilon=\{\bm{x}\in \tilde{L}_\epsilon: {\rm C}_\epsilon+\bm{x}\subseteq B^n(R)\}$. We have the following lemma.
\begin{lemma}\label{cover}
$\{{\rm C}_\epsilon+\bm{x}:\bm{x}\in L_\epsilon\}$ covers $B^n(R-2n^{\frac{p+2}{2p}}\epsilon)$.
\end{lemma}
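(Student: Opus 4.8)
The plan is to show directly that every point $\bm{y}$ with $\|\bm{y}\|\leq R-2n^{\frac{p+2}{2p}}\epsilon$ lies in one of the cubes ${\rm C}_\epsilon+\bm{x}$ with $\bm{x}\in L_\epsilon$. Since $\{{\rm C}_\epsilon+\bm{x}:\bm{x}\in\tilde{L}_\epsilon\}$ tiles $\mathbb{R}^n$, such a $\bm{y}$ lies in ${\rm C}_\epsilon+\bm{x}_0$ for the lattice point $\bm{x}_0=\epsilon(\lfloor y_1/\epsilon\rfloor,\ldots,\lfloor y_n/\epsilon\rfloor)\in\tilde{L}_\epsilon$ obtained by rounding each coordinate of $\bm{y}$ down to a multiple of $\epsilon$. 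So it suffices to verify that $\bm{x}_0\in L_\epsilon$, i.e.\ that ${\rm C}_\epsilon+\bm{x}_0\subseteq B^n(R)$.

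To do this I would estimate the $\|\cdot\|$-diameter of a basic cube. Let $\bm{z}\in{\rm C}_\epsilon+\bm{x}_0$; then $\bm{z}$ and $\bm{y}$ lie in the same cube of side $\epsilon$, so $|z_i-y_i|\leq\epsilon$ for every $i$. Writing $\bm{w}=\bm{z}-\bm{y}$ and cutting it into the blocks $\bm{w}_j=(w_{k_j+1},\ldots,w_{k_{j+1}})$ from the definition of $\|\cdot\|_{p,\bm{k},n}$, each block has at most $n$ coordinates of absolute value at most $\epsilon$, hence $\|\bm{w}_j\|_{2}\leq\sqrt{k_{j+1}-k_j}\,\epsilon\leq\sqrt{n}\,\epsilon$; since there are $m\leq n$ blocks,
\[
\|\bm{z}-\bm{y}\|=\left(\sum_{j=1}^{m}\|\bm{w}_j\|_{2}^p\right)^{1/p}\leq\left(m(\sqrt{n}\,\epsilon)^p\right)^{1/p}=m^{1/p}n^{1/2}\epsilon\leq n^{1/p+1/2}\epsilon=n^{\frac{p+2}{2p}}\epsilon.
\]
By the triangle inequality $\|\bm{z}\|\leq\|\bm{y}\|+\|\bm{z}-\bm{y}\|\leq R-2n^{\frac{p+2}{2p}}\epsilon+n^{\frac{p+2}{2p}}\epsilon<R$, so $\bm{z}\in B^n(R)$. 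As $\bm{z}$ was arbitrary, ${\rm C}_\epsilon+\bm{x}_0\subseteq B^n(R)$, whence $\bm{x}_0\in L_\epsilon$ and $\bm{y}\in{\rm C}_\epsilon+\bm{x}_0$, completing the argument.

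The argument is essentially routine; the only place that needs care is the passage from the sup-norm control $|z_i-y_i|\leq\epsilon$ inside a cube to the superball norm $\|\cdot\|_{p,\bm{k},n}$, which is exactly where the exponent $\frac{p+2}{2p}=\frac12+\frac1p$ appears (the extra factor $2$ in the statement is harmless slack). Note that nothing beyond $p\geq1$ is used here, so the lemma holds for every $p\geq1$; the hypothesis $1<p\leq2$ is only needed elsewhere.
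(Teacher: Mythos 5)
Your proof is correct and takes essentially the same route as the paper's: both arguments reduce to bounding the $\|\cdot\|_{p,\bm{k},n}$-diameter of a basic cube by $n^{\frac{p+2}{2p}}\epsilon$ (blockwise $\ell_2$ bound, then the $\ell_p$ sum over at most $n$ blocks) and applying the triangle inequality to conclude that the cube containing $\bm{y}$ lies inside $B^n(R)$. The only difference is presentational: the paper argues by contradiction, while you argue directly by exhibiting the lattice point $\bm{x}_0$.
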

\begin{proof}
Suppose on the contrary that there is some $\tilde{\bm{y}}=(y_1, y_2, \ldots, y_n)\in B^n(R-2n^{\frac{p+2}{2p}}\epsilon)$ such that $\bm{y}\notin{\rm C}_\epsilon+\bm{x}$ for every $\bm{x}\in L_\epsilon$. Let $\tilde{\bm{x}}=(x_1, x_2, \ldots, x_n)\in \tilde{L}_\epsilon$ such that $0\leq y_i-x_i\leq\epsilon$ for every $1\leq i\leq n$, in other words, $\tilde{\bm{y}}\in{\rm C}_\epsilon+\tilde{\bm{x}}$. By the choice of $\tilde{\bm{y}}$, we have that ${\rm C}_\epsilon+\tilde{\bm{x}}\nsubseteq B^n(R)$. On the other hand, for every $\bm{z}\in{\rm C}_\epsilon+\tilde{\bm{x}}$, we have
$$
\|\bm{z}\|\leq\|\bm{z}-\bm{y}\|+\|\bm{y}\|\leq\left(\sum_{j=1}^{m}\left\|\left(\epsilon, \epsilon, \ldots, \epsilon\right)\right\|_{2}^p\right)^{1/p}+R-2n^{\frac{p+2}{2p}}\epsilon.
$$
Note that $\left\|\left(\epsilon, \epsilon, \ldots, \epsilon\right)\right\|_{2}^p=(\epsilon^2+\epsilon^2+\cdots+\epsilon^2)^{p/2}\leq n^{p/2}\epsilon^p$ since the number of $\epsilon$ is at most $n$, and $\sum_{j=1}^{m}\left\|\left(\epsilon, \epsilon, \ldots, \epsilon\right)\right\|_{2}^p\leq n^{(p+2)/2}\epsilon^p$ since $m\leq n$. So
$$
\|\bm{z}\|\leq\left(\sum_{j=1}^{m}\left\|\left(\epsilon, \epsilon, \ldots, \epsilon\right)\right\|_{2}^p\right)^{1/p}+R-2n^{\frac{p+2}{2p}}\epsilon\leq R-2n^{\frac{p+2}{2p}}\epsilon+n^{\frac{p+2}{2p}}\epsilon<R.
$$
This holds for every $\bm{z}\in{\rm C}_\epsilon+\tilde{\bm{x}}$. Thus ${\rm C}_\epsilon+\tilde{\bm{x}}\subseteq B^n(R)$, a contradiction.
\end{proof}

If we write $N:=\left|L_\epsilon\right|$, we can estimate $N$ by Lemma \ref{cover}; that is,
$$
{\rm vol}\left(B^n(R-2n^{\frac{p+2}{2p}}\epsilon)\right)\leq N\cdot{\rm vol}({\rm C}_\epsilon)\leq{\rm vol}\left(B^n(R)\right),
$$
i.e.,
$$
\left(\frac{R-2n^{\frac{p+2}{2p}}\epsilon}{\epsilon r_{n}}\right)^n\leq N\leq\left(\frac{R}{\epsilon r_{n}}\right)^n.
$$
So $N=(1-o_R(1))\left(\frac{R}{\epsilon r_{n}}\right)^n$.

Let $\{{\rm C}_\epsilon+\bm{x}:\bm{x}\in L_\epsilon\}=\{C_1, C_2, \ldots, C_N\}$. We arbitrarily choose $\bm{v}_i$ from each $C_i$, and construct an auxiliary graph $G$ with vertex set $V(G)=\{\bm{v}_1, \bm{v}_2, \ldots, \bm{v}_N\}$, where $\bm{v}_i$ and $\bm{v}_j$ are adjacent if and only if $d(\bm{v}_i, \bm{v}_j)<2r_{n}$. So the maximum number of superballs with radius $r_{n}$ that can be packed in $B^n(R)$ is larger than or equal to $\alpha(G)$, where $\alpha(G)$ is the independence number of $G$. In other words,
\begin{equation}
\Delta_{p, \bm{k}}(n)\geq\lim_{R\rightarrow\infty}\frac{\alpha(G)}{\left(R/r_{n}\right)^n}.
\end{equation}
Here, using the same trick in the proof of Lemma \ref{xiajie}, the points near the boundary of $B^n(R)$ do not affect the result.

\begin{lemma}\label{baohan}
Every vertex in $G$ has degree at most $\frac{1+o_n(1)}{1-o_R(1)}\left(\frac{2r_{n}}{R}\right)^nN$.
\end{lemma}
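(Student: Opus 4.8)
The plan is to bound the degree of an arbitrary vertex $\bm{v}_i$ by enclosing the cubes of all its neighbours inside a single slightly enlarged superball and then comparing volumes.

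First I would reuse the estimate from the proof of Lemma \ref{cover}: any two points of a single cube $C_j={\rm C}_\epsilon+\bm{x}$ differ by a vector whose every coordinate has absolute value at most $\epsilon$, so their $\|\cdot\|$-distance is at most $\left(\sum_{j=1}^{m}\|(\epsilon,\ldots,\epsilon)\|_2^p\right)^{1/p}\le n^{\frac{p+2}{2p}}\epsilon$, since each block has at most $n$ entries and $m\le n$. In particular $C_j\subseteq B^n\!\left(\bm{v}_j,\,n^{\frac{p+2}{2p}}\epsilon\right)$.

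Next, if $\bm{v}_j$ is a neighbour of $\bm{v}_i$ in $G$ then $d(\bm{v}_i,\bm{v}_j)<2r_n$, so for every $\bm{z}\in C_j$ the triangle inequality gives $d(\bm{v}_i,\bm{z})\le d(\bm{v}_i,\bm{v}_j)+d(\bm{v}_j,\bm{z})<2r_n+n^{\frac{p+2}{2p}}\epsilon$; hence every such $C_j$ lies inside $B^n\!\left(\bm{v}_i,\,2r_n+n^{\frac{p+2}{2p}}\epsilon\right)$. The cubes $C_1,\dots,C_N$ have pairwise disjoint interiors and volume $\epsilon^n$ each, while $B^n(\bm{v}_i,\rho)$ is a translate of $B^n(\rho)$ with ${\rm vol}(B^n(\rho))=(\rho/r_n)^n$ because the unit superball has volume $1$. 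Therefore the number of neighbours of $\bm{v}_i$ is at most
\[
\frac{{\rm vol}\!\left(B^n\!\left(\bm{v}_i,\,2r_n+n^{\frac{p+2}{2p}}\epsilon\right)\right)}{\epsilon^n}
=\frac{\left(2r_n+n^{\frac{p+2}{2p}}\epsilon\right)^n}{(r_n\epsilon)^n}
=\left(\frac{2}{\epsilon}\right)^n\left(1+\frac{n^{\frac{p+2}{2p}}\epsilon}{2r_n}\right)^{\!n}.
\]

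Finally I would combine this with the count of $N$ recorded just before the lemma, $N\ge(1-o_R(1))\left(\frac{R}{\epsilon r_n}\right)^n$, which rearranges to $(2/\epsilon)^n\le\frac{1}{1-o_R(1)}\left(\frac{2r_n}{R}\right)^n N$, giving exactly the asserted bound provided the residual factor $\left(1+\frac{n^{\frac{p+2}{2p}}\epsilon}{2r_n}\right)^n$ is $1+o_n(1)$. This is precisely what dictates how $\epsilon$ must be ``determined later'': it suffices that $n^{1+\frac{p+2}{2p}}\epsilon/r_n\to0$, which is harmless because $r_n=\Omega(\sqrt n)$ (since $\|\cdot\|\ge\|\cdot\|_2$ for $p\le2$, so $B^n_{p,\bm{k}}(r_n)\subseteq B^n_2(r_n)$, and the Euclidean ball of that radius has volume $1$ only when $r_n$ grows like $\sqrt n$). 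I expect this last bookkeeping — choosing $\epsilon$ small enough in $n$, consistently with its other uses in this section, so that an $n$-th power of $1+o(1)$ stays $1+o_n(1)$ — to be the only delicate point; everything else is a volume count over a disjoint family of boxes.
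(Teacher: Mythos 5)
Your argument is correct and follows essentially the same route as the paper: enclose the cubes of all neighbours of a vertex in the slightly enlarged superball $B^n\bigl(\bm{v}_i,\,2r_n+O\bigl(n^{\frac{p+2}{2p}}\epsilon\bigr)\bigr)$, use disjointness of the cubes for a volume count, and convert to the stated form via the estimate $N\geq(1-o_R(1))\left(\frac{R}{\epsilon r_n}\right)^n$, with $\epsilon$ chosen small enough in $n$ (the paper takes $n^{\frac{p+2}{2p}}\epsilon/r_n<n^{-2}$) so that the $n$-th power of $1+O\bigl(n^{\frac{p+2}{2p}}\epsilon/r_n\bigr)$ is $1+o_n(1)$. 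Your aside that $r_n=\Omega(\sqrt n)$ is correct but unnecessary, since $\epsilon$ may be chosen depending on $r_n$ exactly as the paper does.
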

\begin{proof}
We write $N[\bm{x}]=N(\bm{x})\cup\{\bm{x}\}$ for the closed neighborhood of $\bm{x}$. We claim that for every $\bm{x}\in V(G)$,
\begin{equation}\label{tiji}
B^n(\bm{x}, 2r_{n}-2n^{\frac{p+2}{2p}}\epsilon)\cap B^n(\bm{0}, R-2n^{\frac{p+2}{2p}}\epsilon)\subseteq\bigcup_{\bm{v}_i\in N[\bm{x}]}C_i\subseteq B^n(\bm{x}, 2r_{n}+2n^{\frac{p+2}{2p}}\epsilon).
\end{equation}
For the first inclusion, let $\bm{y}\in B^n(\bm{x}, 2r_{n}-2n^{\frac{p+2}{2p}}\epsilon)\cap B^n(\bm{0}, R-2n^{\frac{p+2}{2p}}\epsilon)$. By Lemma \ref{cover}, there exists an index $i$ such that $\bm{y}\in C_i$. So $d(\bm{y}, \bm{v}_i)\leq n^{\frac{p+2}{2p}}\epsilon$. As $d(\bm{x}, \bm{y})\leq2r_{n}-2n^{\frac{p+2}{2p}}\epsilon$, it follows that $d(\bm{x}, \bm{v}_i)\leq2r_{n}-2n^{\frac{p+2}{2p}}\epsilon+n^{\frac{p+2}{2p}}\epsilon<2r_{n}$, i.e. $\bm{v}_i\in N[\bm{x}]$. For the second inclusion, let $\bm{y}\in C_i$ for some index $i$ with $\bm{v}_i\in N[\bm{x}]$. We have $d(\bm{y}, \bm{v}_i)\leq n^{\frac{p+2}{2p}}\epsilon$ and $d(\bm{x}, \bm{v}_i)<2r_{p, \bm{k}, n}$. So $d(\bm{y}, \bm{x})\leq 2r_{p, \bm{k}, n}+2n^{\frac{p+2}{2p}}\epsilon$, as desired.

Since $C_i$'s are nonoverlapping, it follows from the claim that
$$
{\rm vol}\left(\bigcup_{\bm{v}_i\in N[\bm{x}]}C_i\right)=|N[\bm{x}]|\epsilon^n\leq{\rm vol}(B^n(\bm{x}, 2r_{n}+2n^{\frac{p+2}{2p}}\epsilon))=\left(\frac{2r_{n}+2n^{\frac{p+2}{2p}}\epsilon}{r_{n}}\right)^n,
$$
i.e.,
$$
|N[\bm{x}]|\leq\left(\frac{1}{\epsilon r_{n}}\right)^n(2r_{n}+2n^{\frac{p+2}{2p}}\epsilon)^n=\frac{1+o_n(1)}{1-o_R(1)}\left(\frac{2r_{n}}{R}\right)^nN,
$$
where we choose $\epsilon$ satisfying $n^{\frac{p+2}{2p}}\epsilon/r_n<n^{-2}$.
\end{proof}

Let $D:=\frac{1+o_n(1)}{1-o_R(1)}\left(\frac{2r_{n}}{R}\right)^nN$ and $K:=\frac{1}{10}\left(\frac{2}{c_p}\right)^n$, where $c_p$ is the constant in Lemma \ref{zhongyao}. Let $G[N(\bm{x})]$ denote the induced subgraph of $G$ whose vertex set is $N(\bm{x})$. We have the following lemma.
\begin{lemma}\label{pingjundu}
Suppose $R$ and $n$ are sufficiently large. For every $\bm{x}\in V(G)$, the average degree of $G[N(\bm{x})]$ is at most $\frac{D}{K}$.
\end{lemma}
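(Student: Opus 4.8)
The plan is to express the average degree of $G[N(\bm{x})]$ as an average, over $\bm{y}\in N(\bm{x})$, of the common-neighbourhood sizes $|N(\bm{y})\cap N(\bm{x})|$, to discretize each such size into the volume of an intersection of two superballs, and then to recognize the resulting average as exactly the quantity estimated in Lemma \ref{zhongyao}. First I would record the elementary identity that the average degree of $G[N(\bm{x})]$ equals $\frac{1}{|N(\bm{x})|}\sum_{\bm{y}\in N(\bm{x})}|N(\bm{y})\cap N(\bm{x})|$ (it is $0$, so there is nothing to prove, when $N(\bm{x})=\emptyset$), so the task reduces to bounding the mean of $|N(\bm{y})\cap N(\bm{x})|$ over a uniformly random $\bm{y}\in N(\bm{x})$.

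Next, writing $\eta:=n^{\frac{p+2}{2p}}\epsilon$ for the bound on the $\|\cdot\|$-diameter of a basic cube (the same quantity appearing in Lemmas \ref{cover} and \ref{baohan}), and setting $S_{\bm{x}}:=\bigcup_{\bm{v}_i\in N(\bm{x})}C_i$ — a disjoint union of $|N(\bm{x})|$ cubes, so ${\rm vol}(S_{\bm{x}})=|N(\bm{x})|\epsilon^n$ and $S_{\bm{x}}\subseteq B^n(\bm{x},2r_n+\eta)$ — I would observe that every common neighbour $\bm{v}_i\in N(\bm{y})\cap N(\bm{x})$ has $d(\bm{v}_i,\bm{y})<2r_n$, so its cube sits inside $B^n(\bm{y},2r_n+\eta)\cap S_{\bm{x}}$; since these cubes are pairwise disjoint this yields $|N(\bm{y})\cap N(\bm{x})|\leq\epsilon^{-n}{\rm vol}\bigl(B^n(\bm{y},2r_n+\eta)\cap S_{\bm{x}}\bigr)$. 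To pass from the sum over the representatives $\bm{y}$ to an integral over $S_{\bm{x}}$, I would use that $B^n(\bm{y},2r_n+\eta)\subseteq B^n(\bm{w},2r_n+2\eta)$ for every $\bm{w}$ in the cube of $\bm{y}$, average the previous bound over that cube, and sum over $\bm{y}$ (the cubes tile $S_{\bm{x}}$); dividing by $|N(\bm{x})|=\epsilon^{-n}{\rm vol}(S_{\bm{x}})$ this gives
$$
\text{average degree of }G[N(\bm{x})]\ \leq\ \frac{\epsilon^{-n}}{{\rm vol}(S_{\bm{x}})}\int_{S_{\bm{x}}}{\rm vol}\bigl(B^n(\bm{w},2r_n+2\eta)\cap S_{\bm{x}}\bigr)\,d\bm{w}.
$$

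Finally I would invoke Lemma \ref{zhongyao}. Since $S_{\bm{x}}\subseteq B^n(\bm{x},2r_n+2\eta)=\bm{x}+tB^n(\bm{0},2r_n)$ with $t:=1+\eta/r_n$ chosen so that $(2r_n+2\eta)/t=2r_n$, translating by $-\bm{x}$ and scaling by $1/t$ turns $S_{\bm{x}}$ into a measurable set $S^*\subseteq B^n(\bm{0},2r_n)$, and by homogeneity of $\|\cdot\|$ the right-hand side above becomes $\epsilon^{-n}t^n\,\mathbb{E}_{\bm{u}}\bigl[{\rm vol}(B^n(\bm{u},2r_n)\cap S^*)\bigr]$ with $\bm{u}$ uniform in $S^*$; by inequality (\ref{xiaoyucp}) this is at most $\epsilon^{-n}t^n\cdot 2c_p^n$. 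As $\epsilon$ was chosen with $\eta/r_n<n^{-2}$ we have $t^n\leq(1+n^{-2})^n=1+o_n(1)$, so the average degree of $G[N(\bm{x})]$ is at most $(2+o_n(1))(c_p/\epsilon)^n$, while $D\geq(2/\epsilon)^n$ and $K=\frac{1}{10}(2/c_p)^n$ give $D/K\geq10(c_p/\epsilon)^n$, which dominates once $n$ is large. I expect the only delicate point to be keeping the direction of the discretization errors straight: the cube-diameter slack $\eta$ must always be absorbed by \emph{enlarging} the outer superballs (so that an upper bound is preserved), the reference set over which Lemma \ref{zhongyao} is applied must be exactly $S_{\bm{x}}$ (a union of cubes) rather than a full superball — which is why we need that lemma in its stated generality — and the dilation factor $t$ must be matched precisely so that $(2r_n+2\eta)/t=2r_n$; everything else is bookkeeping with the negligible $o_n(1)$ and $o_R(1)$ terms.
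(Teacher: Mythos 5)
Your proof is correct, and it rests on the same two pillars as the paper's argument: the cube union $S'=\bigcup_{\bm{v}_i\in N(\bm{x})}C_i$ as a proxy for the neighbourhood, and inequality (\ref{xiaoyucp}) of Lemma \ref{zhongyao} applied after rescaling that union into a superball of radius $2r_n$ centred at the origin, followed by the same comparison with $D/K=10(1+o_n(1))(c_p/\epsilon)^n$. Where you genuinely differ is the discretization step. The paper passes from the double sum over representatives to the double integral $\int_{S'}\int_{S'}\mathbf{1}_{d(\bm{u},\bm{v})\leq2r_n}\,d\bm{u}\,d\bm{v}$ by appealing to the definition of integration, which forces it to carry an additive error $\delta$ valid only for $\epsilon<\epsilon_0(\delta)$ and, in turn, to prove the auxiliary lower bound ${\rm vol}(S')\geq1-o_n(1)$ so that the term $\delta/({\rm vol}(S')\epsilon^n)$ is harmless. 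You instead bound each count $|N(\bm{y})\cap N(\bm{x})|$ deterministically by a volume (the cubes of common neighbours lie in $B^n(\bm{y},2r_n+\eta)\cap S_{\bm{x}}$ with $\eta=n^{\frac{p+2}{2p}}\epsilon$) and then absorb the representative $\bm{y}$ into an arbitrary point of its cube at the cost of enlarging the radius by a further $\eta$; this monotone comparison gives the integral inequality exactly, with no limit in $\epsilon$, no $\delta$, and no need for the lower bound on ${\rm vol}(S')$, which is a cleaner piece of bookkeeping. The enlarged radius $2r_n+2\eta$ is then removed by the dilation $t=1+\eta/r_n$ (playing the same role as the paper's rescaling of $S'$ into $B^n(\bm{x},2r_n)$), costing only $t^n=1+o_n(1)$ under the constraint $\eta/r_n<n^{-2}$ already imposed in Lemma \ref{baohan}. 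The final comparison also checks out: with $D$ as introduced after Lemma \ref{baohan}, one has $D\geq(2/\epsilon)^n$ up to the stated $1\pm o(1)$ factors, so your bound $(2+o_n(1))(c_p/\epsilon)^n$ is indeed below $D/K$ for $n$ and $R$ large, matching the paper's margin of $5(c_p/\epsilon)^n<D/K$.
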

\begin{proof}
We calculate $\frac{D}{K}=10(1+o_n(1))\left(\frac{c_p}{\epsilon}\right)^n$. Let $S'=\bigcup_{\bm{v}_i\in N(\bm{x})}C_i$ and $N(\bm{x})=\{\bm{x}_1, \bm{x}_2, \ldots, \bm{x}_t\}$ where $t=|N(\bm{x})|$. So ${\rm vol}(S')=t\epsilon^n$. By the definition, the average degree of $G[N(\bm{x})]$ is at most
\begin{equation}\label{37}
\frac{1}{t}\sum_{i, j=1}^t\mathbf{1}_{d(\bm{x}_i, \bm{x}_j)\leq2r_{n}}=\frac{\epsilon^n}{{\rm vol}(S')}\sum_{i, j=1}^t\mathbf{1}_{d(\bm{x}_i, \bm{x}_j)\leq2r_{n}}.
\end{equation}
By the definition of integration,
$$
\lim_{\epsilon\rightarrow0}\epsilon^{2n}\sum_{i, j=1}^t\mathbf{1}_{d(\bm{x}_i, \bm{x}_j)\leq2r_{n}}=\int_{S'}\int_{S'}\mathbf{1}_{d(\bm{u}, \bm{v})\leq2r_{n}}d\bm{u}d\bm{v},
$$
in other words, for every $\delta>0$, there exists $\epsilon_0(\delta)$ such that
\begin{equation}\label{38}
\epsilon^{2n}\sum_{i, j=1}^t\mathbf{1}_{d(\bm{x}_i, \bm{x}_j)\leq2r_{n}}\leq\int_{S'}\int_{S'}\mathbf{1}_{d(\bm{u}, \bm{v})\leq2r_{n}}d\bm{u}d\bm{v}+\delta.
\end{equation}
whenever $\epsilon<\epsilon_0(\delta)$.

Recall that in the proof of Lemma \ref{baohan} we have $S'\subseteq B^n(\bm{x}, 2r_{n}+2n^{\frac{p+2}{2p}}\epsilon)$. Let $S=\frac{2r_{n}}{2r_{n}+2n^{\frac{p+2}{2p}}\epsilon}S'$. So $S\subseteq B^n(\bm{x}, 2r_{n})$ and
\begin{equation}\label{39}
\begin{split}
\int_{S'}\int_{S'}\mathbf{1}_{d(\bm{u}, \bm{v})\leq2r_{n}}d\bm{u}d\bm{v}=&\left(\frac{2r_{n}+2n^{\frac{p+2}{2p}}\epsilon}{2r_{n}}\right)^{2n}\int_{S}\int_{S}\mathbf{1}_{d(\bm{u}, \bm{v})\leq2r_{n}\cdot2r_n/(2r_{n}+2n^{\frac{p+2}{2p}}\epsilon)}d\bm{u}d\bm{v}\\
\leq&\left(\frac{2r_{n}+2n^{\frac{p+2}{2p}}\epsilon}{2r_{n}}\right)^{n}\frac{{\rm vol}(S')}{{\rm vol}(S)}\int_{S}\int_{S}\mathbf{1}_{d(\bm{u}, \bm{v})\leq2r_{n}}d\bm{u}d\bm{v}\\
=&(1+o_n(1))\frac{{\rm vol}(S')}{{\rm vol}(S)}\int_{S}\int_{S}\mathbf{1}_{d(\bm{u}, \bm{v})\leq2r_{n}}d\bm{u}d\bm{v},
\end{split}
\end{equation}
where we choose $\epsilon$ small enough so that $\left(\frac{2r_{n}+2n^{\frac{p+2}{2p}}\epsilon}{2r_{n}}\right)^{n}=1+o_n(1)$. For instance, if we choose $\epsilon$ satisfying $n^{\frac{p+2}{2p}}\epsilon/r_{n}<n^{-2}$, then $\left(\frac{2r_{n}+2n^{\frac{p+2}{2p}}\epsilon}{2r_{n}}\right)^{n}<\left(1+\frac{1}{n^2}\right)^{n}<e^{1/n}=1+o_n(1)$.

Recall that in the proof of Lemma \ref{zhongyao}, we have
\begin{equation}\label{40}
\int_{S}\int_{S}\mathbf{1}_{d(\bm{u}, \bm{v})\leq2r_{n}}d\bm{u}d\bm{v}\leq2{\rm vol}(S)c_p^n.
\end{equation}
Combining equation (\ref{37}) and inequalities (\ref{38})-(\ref{40}), the average degree of $G[N(\bm{x})]$ is at most
\begin{equation}\label{42}
\begin{split}
\frac{\epsilon^n}{{\rm vol}(S')}\sum_{i, j=1}^t\mathbf{1}_{d(\bm{x}_i, \bm{x}_j)\leq2r_{n}}\leq&\frac{1}{{\rm vol}(S')\epsilon^n}\left(\int_{S'}\int_{S'}\mathbf{1}_{d(\bm{u}, \bm{v})\leq2r_{n}}d\bm{u}d\bm{v}+\delta\right)\\
\leq&\frac{1}{{\rm vol}(S')\epsilon^n}\left((1+o_n(1))\frac{{\rm vol}(S')}{{\rm vol}(S)}\int_{S}\int_{S}\mathbf{1}_{d(\bm{u}, \bm{v})\leq2r_{n}}d\bm{u}d\bm{v}+\delta\right)\\
\leq&\frac{1}{{\rm vol}(S')\epsilon^n}\left((1+o_n(1))\cdot2c_p^n{\rm vol}(S')+\delta\right)\\
=&(1+o_n(1))\cdot2\left(\frac{c_p}{\epsilon}\right)^n+\frac{\delta}{{\rm vol}(S')\epsilon^n}.
\end{split}
\end{equation}

Next we give a lower bound of ${\rm vol}(S')$. Recalling the inclusion relation (\ref{tiji}), we have
$$
\bigcup_{\bm{v}_i\in N[\bm{x}]}C_i \supseteq B^n(\bm{x}, 2r_{n}-2n^{\frac{p+2}{2p}}\epsilon)\cap B^n(\bm{0}, R-2n^{\frac{p+2}{2p}}\epsilon).
$$
Denote $V_{{\rm lower}}:={\rm vol}\left(B^n(\bm{x}, 2r_{n}-2n^{\frac{p+2}{2p}}\epsilon)\cap B^n(\bm{0}, R-2n^{\frac{p+2}{2p}}\epsilon)\right)$. If $B^n(\bm{x}, 2r_{n}-2n^{\frac{p+2}{2p}}\epsilon)\subseteq B^n(\bm{0}, R-2n^{\frac{p+2}{2p}}\epsilon)$, then
$$
V_{{\rm lower}}\geq{\rm vol}\left(B^n(\bm{x}, 2r_{n}-2n^{\frac{p+2}{2p}}\epsilon)\right)=\left(\frac{2r_{n}-2n^{\frac{p+2}{2p}}\epsilon}{r_{n}}\right)^n=(1-o_n(1))2^n.
$$
If $B^n(\bm{x}, 2r_{n}-2n^{\frac{p+2}{2p}}\epsilon)\nsubseteq B^n(\bm{0}, R-2n^{\frac{p+2}{2p}}\epsilon)$, then we have
$$
B^n(\bm{x}, 2r_{n}-2n^{\frac{p+2}{2p}}\epsilon)\cap B^n(\bm{0}, R-2n^{\frac{p+2}{2p}}\epsilon)\supseteq B^n\left(\bm{y}, r_{n}-2n^{\frac{p+2}{2p}}\epsilon\right),
$$
where $\bm{y}:=\left(1-\frac{r_{n}}{\|\bm{x}\|}\right)\bm{x}$. To see this, letting $\bm{z}\in B^n\left(\bm{y}, r_{n}-2n^{\frac{p+2}{2p}}\epsilon\right)$, we have $d(\bm{z}, \bm{y})\leq r_{n}-2n^{\frac{p+2}{2p}}\epsilon$. So
$$
d(\bm{x}, \bm{z})\leq d(\bm{x}, \bm{y})+d(\bm{y}, \bm{z})\leq r_n+r_{n}-2n^{\frac{p+2}{2p}}\epsilon=2r_{n}-2n^{\frac{p+2}{2p}}\epsilon.
$$
And
$$
d(\bm{0}, \bm{z})\leq d(\bm{0}, \bm{y})+d(\bm{y}, \bm{z})\leq \|\bm{x}\|-r_n+r_{n}-2n^{\frac{p+2}{2p}}\epsilon\leq R-2n^{\frac{p+2}{2p}}\epsilon,
$$
Since $\bm{x}\in B^n(\bm{0}, R)$. Thus
$$
V_{{\rm lower}}\geq {\rm vol}\left(B^n\left(\bm{y}, r_{n}-2n^{\frac{p+2}{2p}}\epsilon\right)\right)=\left(\frac{r_{n}-2n^{\frac{p+2}{2p}}\epsilon}{r_{n}}\right)^n=1-o_n(1).
$$
In conclusion, we have
$$
{\rm vol}(S')={\rm vol}\left(\bigcup_{\bm{v}_i\in N[\bm{x}]}C_i\right)-\epsilon^n\geq V_{{\rm lower}}-\epsilon^n\geq1-o_n(1).
$$

Therefore, if we choose $\delta=c_p^n$ and $\epsilon\leq\min\{\epsilon_0(c_p^n), r_n/n^{2+\frac{p+2}{2p}}\}$, then by inequality (\ref{42}), we know that the average degree of $G[N(\bm{x})]$ is at most
$$
(1+o_n(1))\cdot2\left(\frac{c_p}{\epsilon}\right)^n+\frac{\delta}{{\rm vol}(S')\epsilon^n}\leq(1+o_n(1))\cdot2\left(\frac{c_p}{\epsilon}\right)^n+\frac{c_p^n}{(1-o_n(1))\epsilon^n}<5\left(\frac{c_p}{\epsilon}\right)^n<\frac{D}{K},
$$
for large $n$. Note that this is independent of the choice of $\bm{x}$, so we are done.
\end{proof}

By Lemmas \ref{baohan} and \ref{pingjundu}, we know that the $N$-vertex graph $G$ has maximum degree $D$ and every subgraph induced by a neighborhood in $G$ has average degree at most $D/K$. We say such a graph is locally sparse. By a result of Hurley and Pirot \cite[Corollary 1]{2021arXiv210915215P}, the chromatic number of $G$ is at most $(1+o_K(1))\frac{D}{\log K}$. So the independence number $\alpha(G)$ of $G$ is at least
\begin{equation}
\begin{split}
(1-o_K(1))\frac{N}{D}\log K=&(1-o_n(1))\frac{1-o_R(1)}{1+o_n(1)}\left(\frac{R}{2r_{n}}\right)^n\log\left(\frac{1}{10}\left(\frac{2}{c_p}\right)^n\right)\\
=&(1-o_n(1))(1-o_R(1))\left(\frac{R}{r_{n}}\right)^n\frac{\log(2/c_p)\cdot n}{2^n},
\end{split}
\end{equation}
and
$$
\Delta_{p, \bm{k}}(n)\geq\lim_{R\rightarrow\infty}\frac{\alpha(G)}{\left(R/r_{n}\right)^n}=(1-o_n(1))\frac{\log(2/c_p)\cdot n}{2^n}.
$$
This completes the proof of Theorem \ref{zhuyaojieguo}.

\section{A lower bound on entropy density and pressure}\label{entropy}
In this section, we investigate the entropy density and pressure of packings. Let $B^n_{p, \bm{k}}(R)$ be the region of packings and $V={\rm vol}(B^n_{p, \bm{k}}(R))=(R/r_{p, \bm{k}, n})^n$. Define
$$
f_{p, \bm{k}, n}(\alpha)=\lim_{V\rightarrow\infty}\frac{1}{\alpha V}\log\frac{\hat{Z}_{B^n_{p, \bm{k}}(R), p, \bm{k}}(\lfloor\alpha V\rfloor)}{V^{\lfloor\alpha V\rfloor}/\lfloor\alpha V\rfloor!}
$$
to be the \textit{entropy density} of the packings. Consider a random packing in $B^n_{p, \bm{k}}(R)$ with density $\alpha$. The superballs we use in the packings have volume $1$, so there are $\lfloor\alpha V\rfloor$ centers. $V^{\lfloor\alpha V\rfloor}/\lfloor\alpha V\rfloor!$ measures
all the choices of $\lfloor\alpha V\rfloor$ unordered points chosen in $B^n_{p, \bm{k}}(R)$, and $\hat{Z}_{B^n_{p, \bm{k}}(R), p, \bm{k}}(\lfloor\alpha V\rfloor)$ measures the choices of $\lfloor\alpha V\rfloor$ unordered points that can form a packing. Dividing by $\alpha V$ ensures that $f_{p, \bm{k}, n}(\alpha)$ is independent of the choice of the size of superballs in our packings. So $f_{p, \bm{k}, n}(\alpha)$ measures how plentiful the packings of density $\alpha$ are. We also define
$$
g_{p, \bm{k}, n}(\lambda)=\lim_{V\rightarrow\infty}\frac{1}{V}\log Z_{B^n_{p, \bm{k}}(R), p, \bm{k}}(\lambda)
$$
to be the \textit{pressure} of the packings. $g_{p, \bm{k}, n}(\lambda)$ measures how plentiful the packings with fugacity $\lambda$ are. We have the following theorems.

\begin{theorem}\label{gxiajie}
Let $c_p$ be the constant in Theorem \ref{main}. For $\lambda\in(2^{-n}, c_p^{-n}]$, we have
\begin{equation}\label{xiajieg}
g_{p, \bm{k}, n}(\lambda)\geq\left(\frac{\left(\log2+\frac{1}{n}\log\lambda\right)^2}{2}+o_n(1)\right)\frac{n^2}{2^n}.
\end{equation}
\end{theorem}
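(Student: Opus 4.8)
The plan is to express the pressure as an integral of the expected packing density against the fugacity and then feed in the linear lower bound on $\alpha$ recorded in inequality (\ref{alphaxiajie}). Fix $S = B^n_{p, \bm{k}}(R)$ and write $V = {\rm vol}(S) = (R/r_{p, \bm{k}, n})^n$. Rearranging equation (\ref{dengshi4}) gives $(\log Z_{S, p, \bm{k}}(t))' = \frac{V}{t}\,\alpha_{S, p, \bm{k}}(t)$, and by Lemma \ref{fvs} we have $\alpha_{S, p, \bm{k}}(t)/t = {\rm FV}_{S, p, \bm{k}}(t) \in [0, 1]$, so the right-hand side is integrable down to $t = 0$. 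Since $Z_{S, p, \bm{k}}(0) = \hat{Z}_{S, p, \bm{k}}(0) = 1$, integrating from $0$ to $\lambda$ yields the identity $\frac{1}{V}\log Z_{S, p, \bm{k}}(\lambda) = \int_0^\lambda \frac{\alpha_{S, p, \bm{k}}(t)}{t}\, dt$, and letting $R \to \infty$ will recover $g_{p, \bm{k}, n}(\lambda)$.

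Next I would bound this integral from below. Since the integrand is nonnegative, I can restrict to $t \in [2^{-n}, \lambda]$ (note $\lambda > 2^{-n}$ by hypothesis) and substitute $t = e^{-nu}$, under which $dt/t = -n\, du$; writing $s := -\frac{1}{n}\log\lambda$, the hypothesis $\lambda \in (2^{-n}, c_p^{-n}]$ translates into $s \in [\log c_p, \log 2)$, and the integral becomes $n\int_s^{\log 2}\alpha_{S, p, \bm{k}}(e^{-nu})\, du$. For $u$ in this range, inequality (\ref{alphaxiajie}) gives $\alpha_{S, p, \bm{k}}(e^{-nu}) \geq (1 + o_n(1))\frac{(\log 2 - u)\, n}{2^n}$, with an error term that does not depend on $S$. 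Plugging this in and using the elementary evaluation $\int_s^{\log 2}(\log 2 - u)\, du = \tfrac12(\log 2 - s)^2$ yields $\frac{1}{V}\log Z_{S, p, \bm{k}}(\lambda) \geq (1 + o_n(1))\frac{n^2}{2^n}\cdot\frac{(\log 2 - s)^2}{2}$. Since $\log 2 - s \in (0, \log(2/c_p)]$ is bounded, the multiplicative $(1 + o_n(1))$ can be absorbed into an additive $o_n(1)$; letting $R \to \infty$ (the bound being independent of $R$) and substituting $\log 2 - s = \log 2 + \frac{1}{n}\log\lambda$ then produces inequality (\ref{xiajieg}).

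The change of variables and the quadratic integral $\int_s^{\log 2}(\log 2 - u)\, du = \tfrac12(\log 2 - s)^2$ — which is exactly the source of the $n^2$ (rather than $n$) in the exponent — are routine. I expect the only delicate point to be the uniformity of the $o_n(1)$ in inequality (\ref{alphaxiajie}) as $u$ approaches the right endpoint $\log 2$, where the linear bound on $\alpha$ degenerates to $0$. Because the weight $\log 2 - u$ also vanishes there, this should be handled by truncating the integral at $u = \log 2 - \eta_n$ for a sequence $\eta_n \downarrow 0$ chosen slowly enough that the discarded piece contributes only $o_n(1)\cdot\frac{n^2}{2^n}$; on the remaining range $[s, \log 2 - \eta_n]$ the bound (\ref{alphaxiajie}) holds with a uniform error, giving the same conclusion.
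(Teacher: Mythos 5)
Your proposal is correct and follows essentially the same route as the paper: write $\frac{1}{V}\log Z_{B^n_{p,\bm{k}}(R),p,\bm{k}}(\lambda)$ as $\int \alpha_{B^n_{p,\bm{k}}(R),p,\bm{k}}(x)\,\frac{dx}{x}$ via equation (\ref{dengshi4}), drop the part below $2^{-n}$ by positivity, substitute $x=e^{-nt}$, and integrate the linear bound (\ref{alphaxiajie}) to get the quadratic factor $\tfrac12(\log 2+\tfrac1n\log\lambda)^2$. Your extra remarks (integrability at $0$ via Lemma \ref{fvs}, and the truncation near $u=\log 2$ to keep the $o_n(1)$ uniform) are sensible refinements of details the paper passes over silently, but they do not change the argument.
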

Note that in Theorem \ref{gxiajie}, $\log2+\frac{1}{n}\log\lambda\in(0, \log2-\log c_p]$.

\begin{theorem}\label{fxiajie}
Let $c_p$ be the constant in Theorem \ref{main}. There exists $\alpha=(1+o_{n}(1))\frac{\log(2/c_p)\cdot n}{2^n}$ such that
$$
f_{p, \bm{k}, n}(\alpha)\geq-(1+o_n(1))\log(2/c_p)\cdot n
$$
\end{theorem}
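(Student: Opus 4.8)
The plan is to use the standard convex duality between entropy density and pressure. Rescaling by Stirling's formula, put $s_{p,\bm k,n}(\alpha):=\alpha\bigl(1-\log\alpha+f_{p,\bm k,n}(\alpha)\bigr)$, so that $s_{p,\bm k,n}(\alpha)=\lim_{V\to\infty}\tfrac1V\log\hat Z_{B^n_{p,\bm k}(R),p,\bm k}(\lfloor\alpha V\rfloor)$ with $V={\rm vol}(B^n_{p,\bm k}(R))$. Since $Z_{S,p,\bm k}(\lambda)=\sum_t\lambda^t\hat Z_{S,p,\bm k}(t)$ has only $(1+o_R(1))V$ nonzero terms, keeping a single term $t=\lfloor\alpha V\rfloor$ and, oppositely, bounding the whole sum by its number of terms times its largest term yield
\[
g_{p,\bm k,n}(e^{\beta})=\sup_{\alpha}\bigl\{\alpha\beta+s_{p,\bm k,n}(\alpha)\bigr\},
\]
so that the pressure, in the variable $\beta=\log\lambda$, is the Legendre--Fenchel transform of $-s_{p,\bm k,n}$. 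I would first record that the limits defining $f_{p,\bm k,n}$ and $g_{p,\bm k,n}$ exist and that $s_{p,\bm k,n}$ is concave on the feasible range $\bigl(0,\Delta_{p,\bm k}(n)\bigr)$ — this is the usual convexity of pressure / concavity of entropy, together with a subadditivity argument over the regions $B^n_{p,\bm k}(R)$. Fenchel--Moreau then inverts the transform:
\[
s_{p,\bm k,n}(\alpha)=\inf_{\lambda>0}\bigl\{g_{p,\bm k,n}(\lambda)-\alpha\log\lambda\bigr\}.
\]

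Next I would feed in the pressure bound of Theorem~\ref{gxiajie}, valid for $\lambda\in(2^{-n},c_p^{-n}]$. Since $g_{p,\bm k,n}$ is nondecreasing and $\lambda\,g_{p,\bm k,n}'(\lambda)$ equals the limiting density — which at $\lambda=c_p^{-n}$ is $\ge(1+o_n(1))\frac{\log(2/c_p)\,n}{2^n}$ by (\ref{alphaxiajie}) — the bound extends, via $g_{p,\bm k,n}(\lambda)\ge g_{p,\bm k,n}(c_p^{-n})+\bar\alpha(c_p^{-n})\log(\lambda c_p^{n})$ for $\lambda\ge c_p^{-n}$ and $g_{p,\bm k,n}\ge0$ for $\lambda\le2^{-n}$, to a pointwise lower bound $g_{p,\bm k,n}(e^{\beta})\ge L(\beta)$ on all of $\mathbb{R}$. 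Hence $s_{p,\bm k,n}(\alpha)\ge\inf_{\beta}\{L(\beta)-\alpha\beta\}$. Taking $\alpha=(1+o_n(1))\frac{\log(2/c_p)\,n}{2^n}$ just below $\Delta_{p,\bm k}(n)$ — which makes $f_{p,\bm k,n}(\alpha)$ well defined by Theorem~\ref{zhuyaojieguo} — the infimum is a piecewise-quadratic minimization attained near $\beta=-n\log c_p$ (i.e.\ $\lambda=c_p^{-n}$), and equals $\bigl(\log(2/c_p)\log 2-\tfrac12\log^2(2/c_p)+o_n(1)\bigr)\frac{n^2}{2^n}$. Since $\log\alpha=-n\log 2+O(\log n)$, this gives
\[
f_{p,\bm k,n}(\alpha)=\frac{s_{p,\bm k,n}(\alpha)}{\alpha}-1+\log\alpha\ \ge\ -\Bigl(\tfrac12\log(2/c_p)+o_n(1)\Bigr)n,
\]
which a fortiori yields the stated inequality (in fact with the better constant $\tfrac12\log(2/c_p)$).

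The main obstacle is making the duality rigorous: the concavity of $s_{p,\bm k,n}$ and the existence and continuity of the thermodynamic limits. One can instead argue directly through the grand canonical hard superball model of Section~\ref{diyizhong}. Choose the fugacity $\lambda_\alpha$ whose limiting density equals $\alpha$; such $\lambda_\alpha$ exists and lies in $(2^{-n},c_p^{-n})$ because $\alpha<\Delta_{p,\bm k}(n)$, $\bar\alpha(c_p^{-n})\ge\alpha$, and $\alpha_{S,p,\bm k}(\lambda)\le\lambda$ by Lemma~\ref{fvs}. Using (\ref{fangcha}) together with $\alpha_{S,p,\bm k}\le1$ and its monotonicity (Lemma~\ref{dizeng}) one can arrange ${\rm Var}_{S,\lambda_\alpha}|X|=O({\rm vol}(S))$, so $|X|$ concentrates within $o(\alpha\,{\rm vol}(S))$ of its mean once ${\rm vol}(S)\gg 4^n/n^2$; a pigeonhole over the $O(\sqrt{{\rm vol}(S)})$ relevant values of $|X|$ then produces $t_0=(1+o(1))\alpha\,{\rm vol}(S)$ with $\mathbb{P}_{S,\lambda_\alpha,p,\bm k}(|X|=t_0)\ge e^{-o({\rm vol}(S))}$. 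The identity $\hat Z_{S,p,\bm k}(t_0)=\mathbb{P}_{S,\lambda_\alpha,p,\bm k}(|X|=t_0)\,Z_{S,p,\bm k}(\lambda_\alpha)\,\lambda_\alpha^{-t_0}$ and Stirling's formula then give $s_{p,\bm k,n}(\alpha)\ge g_{p,\bm k,n}(\lambda_\alpha)-\alpha\log\lambda_\alpha$, and Theorem~\ref{gxiajie} finishes exactly as above. The delicate point in this route is the variance control — equivalently, the choice of fugacity — while the boundary effects for $B^n_{p,\bm k}(R)$ and the bookkeeping of which error terms are $o(n^2/2^n)$ are routine.
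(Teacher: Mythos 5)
Your second, ``direct'' route is essentially the paper's own argument --- pick a good fugacity, show $|X|$ concentrates, pigeonhole to get a value $t_0$ with $\mathbb{P}(|X|=t_0)$ not too small, use $\hat{Z}_{S,p,\bm k}(t_0)=\mathbb{P}(|X|=t_0)\,Z_{S,p,\bm k}(\lambda)\,\lambda^{-t_0}$, Stirling, and the monotonicity Lemma \ref{dijian} --- but the two places where you deviate are exactly where the gaps are. First, you want a fugacity $\lambda_\alpha$ whose \emph{limiting} density equals $\alpha$; this presupposes that $\lim_{R\to\infty}\alpha_{B^n_{p,\bm k}(R),p,\bm k}(\lambda)$ exists and is continuous in $\lambda$, neither of which is proved anywhere (the paper deliberately works only with finite-volume densities and limsups to avoid this). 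Second, the claim that one ``can arrange'' ${\rm Var}(|X|)=O({\rm vol}(S))$ at that particular $\lambda_\alpha$ does not follow from (\ref{fangcha}): that identity controls the integral of $\alpha'_{S,p,\bm k}(\lambda)$ over fugacity intervals, so it produces a fugacity with small variance \emph{somewhere} in an interval (this is precisely the paper's averaging step over $[c_p^{-n},2c_p^{-n}]$, giving ${\rm Var}<V^{3/2}$), not at a prescribed fugacity with prescribed density; pinning the density and controlling the variance simultaneously is not justified. Third, your improved constant $\tfrac12\log(2/c_p)$ additionally requires knowing that $t_0/{\rm vol}(S)$ is close to $\alpha$ from \emph{above}, i.e.\ an upper bound on the density at the chosen fugacity. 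Only lower bounds (Theorem \ref{main}) are available; if the true density at $\lambda\approx c_p^{-n}$ were much larger than $\frac{\log(2/c_p)\,n}{2^n}$, the term $g_{p,\bm k,n}(\lambda)\cdot V/t_0$ in your final estimate degrades and the same computation returns only the paper's constant $\log(2/c_p)$, not $\tfrac12\log(2/c_p)$.

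Your first route (Legendre--Fenchel duality between $s_{p,\bm k,n}$ and $g_{p,\bm k,n}$) is genuinely different from the paper and, if completed, would indeed give the stronger constant $\tfrac12\log(2/c_p)$, because there the infimum is taken over the \emph{lower bound} for the pressure and never needs the actual density. But its inputs --- existence of the thermodynamic limits defining $f_{p,\bm k,n}$ and $g_{p,\bm k,n}$, concavity of $s_{p,\bm k,n}$ (via a superadditivity/box-gluing argument), and the Fenchel--Moreau inversion --- are only asserted, as you yourself note, so as written it is not a proof. The repair for the direct route is to give up pinning the density: choose $\lambda^*$ by averaging the variance identity over an interval such as $[c_p^{-n},2c_p^{-n}]$, use Theorem \ref{main} for a density lower bound, Chebyshev plus pigeonhole, the trivial bound $Z_{S,p,\bm k}(\lambda)\geq1$, and Lemma \ref{dijian} to pass to $\alpha$; this is exactly the paper's proof and yields the stated inequality with constant $\log(2/c_p)$.
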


The proofs of Theorems \ref{gxiajie} and \ref{fxiajie} are similar to those of \cite[Theorem 4]{MR3898718} and \cite[Theorem 5]{MR3898718}, respectively. For the sake of completeness, we include them here.

\begin{proof}[Proof of Theorem \ref{gxiajie}]
We calculate
\begin{equation*}
\begin{split}
\frac{1}{V}\log Z_{B^n_{p, \bm{k}}(R), p, \bm{k}}(\lambda)=&\lim_{\epsilon\rightarrow0}\int_\epsilon^\lambda\frac{1}{V}\left(\log Z_{B^n_{p, \bm{k}}(R), p, \bm{k}}(x)\right)'dx\\
=&\lim_{\epsilon\rightarrow0}\int_\epsilon^\lambda\frac{\alpha_{B^n_{p, \bm{k}}(R), p, \bm{k}}(x)}{x}dx\\
\geq&\int_{2^{-n}}^\lambda\frac{\alpha_{B^n_{p, \bm{k}}(R), p, \bm{k}}(x)}{x}dx,
\end{split}
\end{equation*}
where the second equation uses equation (\ref{dengshi4}) and the last inequality follows from the positivity of the integrand. Put $x=e^{-nt}$. When $x=\lambda$, $t=-\frac{1}{n}\log\lambda$; when $x=2^{-n}$, $t=\log2$. Since $\lambda\in(2^{-n}, c_p^{-n}]$, it follows that $[-\frac{1}{n}\log\lambda, \log2]\subseteq[\log c_p, \log2]$ and we can use inequality (\ref{alphaxiajie}) to estimate the lower bound of $\alpha_{B^n_{p, \bm{k}}(R), p, \bm{k}}(e^{-nt})$. We have
\begin{equation*}
\begin{split}
\int_{2^{-n}}^\lambda\frac{\alpha_{B^n_{p, \bm{k}}(R), p, \bm{k}}(x)}{x}dx=&-n\int_{\log2}^{-\frac{1}{n}\log\lambda}\alpha_{B^n_{p, \bm{k}}(\bm{0}, R), p, \bm{k}}(e^{-nt})dt\\
=&n\int_{-\frac{1}{n}\log\lambda}^{\log2}\alpha_{B^n_{p, \bm{k}}(R), p, \bm{k}}(e^{-nt})dt\\
\geq&n\left(1+o(1)\right)\int_{-\frac{1}{n}\log\lambda}^{\log2}\frac{(\log2-t)\cdot n}{2^n}dt\\
=&\left(\frac{\left(\log2+\frac{1}{n}\log\lambda\right)^2}{2}+o_n(1)\right)\frac{n^2}{2^n}.
\end{split}
\end{equation*}
Let $V$ go to infinity and we obtain equation (\ref{xiajieg}).
\end{proof}

Before proving Theorem \ref{fxiajie}, we give a simple fact of $f_{p, \bm{k}, n}(\alpha)$: the larger the density is, the less such packings exist.

\begin{lemma}\label{dijian}
$f_{p, \bm{k}, n}(\alpha)$ is decreasing in $\alpha$.
\end{lemma}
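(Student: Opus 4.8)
The statement to prove is that $f_{p, \bm{k}, n}(\alpha)$ is decreasing in $\alpha$, i.e., Lemma~\ref{dijian}.

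\textbf{Proof proposal.} The plan is to show that the defining ratio
$$
\frac{\hat{Z}_{B^n_{p, \bm{k}}(R), p, \bm{k}}(t)}{V^{t}/t!}
$$
is itself, after the normalization $\frac{1}{t}\log(\cdot)$, non-increasing in $t$ along the relevant range, and then pass to the limit $V\to\infty$. First I would observe that $\hat{Z}(t) = \frac{1}{t!}\int_{S^t}\mathbf{1}_{\mathcal{D}(\bm{x}_1,\dots,\bm{x}_t)}\,d\bm{x}_1\cdots d\bm{x}_t$, so that $t!\,\hat{Z}(t)$ counts (the measure of) ordered $t$-tuples forming a packing. The key combinatorial inequality is a super-multiplicativity/monotonicity statement: if $t$ points form a packing then so does any subset, which gives a natural projection from packings of size $t+1$ to packings of size $t$ by deleting one point. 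Quantitatively, integrating out one coordinate,
$$
(t+1)\hat{Z}(t+1) \le \hat{Z}(t)\cdot \mathrm{vol}(S),
$$
since for each fixed packing $(\bm{x}_1,\dots,\bm{x}_t)$ the inner integral over $\bm{x}_{t+1}$ is at most $\mathrm{vol}(S)=V$. Rewriting, $\dfrac{\hat{Z}(t+1)}{V^{t+1}/(t+1)!} \le \dfrac{\hat{Z}(t)}{V^t/t!}$, so the sequence $a_t := \hat{Z}(t)\big/(V^t/t!)$ is non-increasing in $t$ (it starts at $a_0=1$ and decreases).

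Next I would convert this into monotonicity of $\frac{1}{t}\log a_t$. Since $a_t$ is non-increasing and $a_0 = 1$, we have $a_t \le 1$ for all $t$, hence $\log a_t \le 0$; but non-increasingness of $a_t$ alone does not immediately give non-increasingness of $\frac{1}{t}\log a_t$. To get the stronger statement I would appeal to log-concavity of $t \mapsto a_t$ (equivalently of $t!\,\hat{Z}(t)$ up to the $V^t$ factor, which is log-linear): the partition function coefficients of the hard-core / grand canonical model are known to form a log-concave sequence; alternatively this follows from the Newton-type inequalities for the polynomial $Z_S(\lambda)=\sum_t \lambda^t \hat{Z}_S(t)$ together with the fact established in Lemma~\ref{dizeng} (the variance computation) that shows the relevant convexity. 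Given $\log a_t$ concave in $t$ with $\log a_0 = 0$, the slopes $\frac{\log a_t - \log a_0}{t - 0} = \frac{\log a_t}{t}$ are non-increasing in $t$, which is exactly what is needed. Finally, taking $t = \lfloor \alpha V\rfloor$ and letting $V\to\infty$, the limit $f_{p,\bm{k},n}(\alpha) = \lim_V \frac{1}{\alpha V}\log a_{\lfloor\alpha V\rfloor}$ inherits monotonicity: for $\alpha' > \alpha$ one has $\lfloor\alpha' V\rfloor \ge \lfloor\alpha V\rfloor$ for large $V$, so $\frac{1}{\lfloor\alpha' V\rfloor}\log a_{\lfloor\alpha' V\rfloor} \le \frac{1}{\lfloor\alpha V\rfloor}\log a_{\lfloor\alpha V\rfloor}$, and multiplying through by $\lfloor\alpha V\rfloor/(\alpha V) \to 1$ and $\lfloor\alpha' V\rfloor/(\alpha' V)\to 1$ preserves the inequality in the limit; hence $f_{p,\bm{k},n}(\alpha') \le f_{p,\bm{k},n}(\alpha)$.

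\textbf{Main obstacle.} The step I expect to be delicate is establishing that $\frac{1}{t}\log a_t$ (not just $a_t$) is non-increasing — i.e., justifying the log-concavity of the sequence $\hat{Z}(t)$ cleanly, or otherwise finding a direct argument avoiding it. One robust workaround is to bypass the discrete sequence entirely and argue via the pressure: $f_{p,\bm{k},n}(\alpha)$ is (up to sign and the Legendre-type relation with $g_{p,\bm{k},n}(\lambda)$) the concave conjugate of the pressure, and the entropy density is a concave function of $\alpha$ vanishing appropriately at the left endpoint, from which $f(\alpha)/\alpha$-type monotonicity — hence $f$ decreasing on the range of interest — follows from standard convex-duality bookkeeping. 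I would present whichever of these two routes (direct log-concavity of $\hat Z(t)$, or convex duality with $g$) turns out to require the least new machinery, most likely the direct one using the deletion map plus a log-concavity input, as it keeps the proof self-contained relative to the earlier parts of the paper.
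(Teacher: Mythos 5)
Your deletion inequality $(t+1)\hat{Z}_S(t+1)\le \hat{Z}_S(t)\,\mathrm{vol}(S)$ is correct and gives that $a_t=\hat{Z}_S(t)/(V^t/t!)$ is non-increasing with $a_0=1$, but, as you yourself note, the lemma needs monotonicity of $\tfrac1t\log a_t$, and your two proposed ways of getting it do not hold up. Log-concavity of $t\mapsto \hat{Z}_S(t)$ is \emph{not} a known fact for the hard-sphere/superball model: it is the continuum analogue of log-concavity of the coefficients of an independence polynomial, which is false for general graphs and known only under strong structural hypotheses (e.g.\ claw-freeness, which the conflict graph of superballs badly violates in high dimensions, since a ball of radius $2r_{p,\bm{k},n}$ contains many points pairwise at distance $\ge 2r_{p,\bm{k},n}$). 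Your fallback via ``Newton-type inequalities plus Lemma \ref{dizeng}'' also does not work: the variance identity in Lemma \ref{dizeng} only says $\log Z_{S,p,\bm{k}}(e^s)$ is convex in $s$, which is automatic for any polynomial with nonnegative coefficients and implies nothing about log-concavity of the coefficients themselves. The convex-duality route (Legendre conjugacy of $f$ and $g$) is likewise not available off the shelf here; it would require concavity of the entropy density / equivalence of ensembles, which the paper never establishes and which is itself nontrivial. So the proposal has a genuine gap at exactly the step you flagged.

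The paper's proof sidesteps this entirely by never comparing two particle numbers inside the same container. Given $\alpha_1<\alpha_2$, it matches the particle number, choosing volumes $V_1>V_2$ with $\alpha_1V_1=\alpha_2V_2$, so both densities correspond to the same $t=\lfloor\alpha_1V_1\rfloor=\lfloor\alpha_2V_2\rfloor$, and then compares the two packing probabilities by a scaling argument: dilating $B^n_{p,\bm{k}}(R_2)$ to $B^n_{p,\bm{k}}(R_1)$ turns a configuration with pairwise distances $\ge 2r_{p,\bm{k},n}$ into one with pairwise distances $\ge (R_1/R_2)\cdot 2r_{p,\bm{k},n}$, an event contained in the packing event at separation $2r_{p,\bm{k},n}$ (homogeneity of the norm is all that is used). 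Hence the per-particle entropy at density $\alpha_1$ dominates that at density $\alpha_2$ at every matched finite volume, and the inequality survives the limit $V\to\infty$. If you want to salvage your approach, you would either have to prove the needed log-concavity (open in this generality) or replace it by such a fixed-$t$, varying-volume comparison, which is exactly what the paper does.
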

\begin{proof}
If $\alpha>\Delta_{p, \bm{k}}(n)$, then $\hat{Z}_{B^n_{p, \bm{k}}(R), p, \bm{k}}(\lfloor\alpha V\rfloor)=0$ for large $V$. So $f_{p, \bm{k}, n}(\alpha)=0$ when $\alpha>\Delta_{p, \bm{k}}(n)$.

Suppose $0<\alpha_1<\alpha_2\leq\Delta_{p, \bm{k}}(n)$, and let $V_1$ and $V_2$ satisfy $\alpha_1V_1=\alpha_2V_2$ (so $V_1>V_2$). We claim that
\begin{equation}\label{alphav}
\frac{1}{\alpha_1 V_1}\log\frac{\hat{Z}_{B^n_{p, \bm{k}}(R_1), p, \bm{k}}(\lfloor\alpha_1 V_1\rfloor)}{V_1^{\lfloor\alpha_1 V_1\rfloor}/\lfloor\alpha_1 V_1\rfloor!}>\frac{1}{\alpha_2 V_2}\log\frac{\hat{Z}_{B^n_{p, \bm{k}}(R_2), p, \bm{k}}(\lfloor\alpha_2 V_2\rfloor)}{V_2^{\lfloor\alpha_2 V_2\rfloor}/\lfloor\alpha_2 V_2\rfloor!},
\end{equation}
where $V_1=(R_1/r_{p, \bm{k}, n})^n$ and $V_2=(R_2/r_{p, \bm{k}, n})^n$ (so $R_1>R_2$).
In order to prove inequality (\ref{alphav}), it suffices to prove
$$
\frac{\hat{Z}_{B^n_{p, \bm{k}}(R_1), p, \bm{k}}(\lfloor\alpha_1 V_1\rfloor)}{V_1^{\lfloor\alpha_1 V_1\rfloor}}>\frac{\hat{Z}_{B^n_{p, \bm{k}}(R_2), p, \bm{k}}(\lfloor\alpha_2 V_2\rfloor)}{V_2^{\lfloor\alpha_2 V_2\rfloor}}.
$$
Using equation (\ref{defzhat}), the definition of $\hat{Z}_{S, p, \bm{k}}(t)$, it reduces to
\begin{equation}\label{gailv}
\frac{\int_{B^n_{p, \bm{k}}(R_1)^t}\textbf{1}_{\mathcal{D}_{p, \bm{k}}(\bm{x}_1, \bm{x}_2, \ldots, \bm{x}_t)}d\bm{x}_1d\bm{x}_2\cdots d\bm{x}_t}{V_1^{t}}>\frac{\int_{B^n_{p, \bm{k}}(R_2)^t}\textbf{1}_{\mathcal{D}_{p, \bm{k}}(\bm{x}_1, \bm{x}_2, \ldots, \bm{x}_t)}d\bm{x}_1d\bm{x}_2\cdots d\bm{x}_t}{V_2^{t}},
\end{equation}
where $t=\lfloor\alpha_1V_1\rfloor=\lfloor\alpha_2V_2\rfloor$. Consider
$$
\frac{\int_{B^n_{p, \bm{k}}(R_2)^t}\textbf{1}_{\mathcal{D}_{p, \bm{k}}(\bm{x}_1, \bm{x}_2, \ldots, \bm{x}_t)}d\bm{x}_1d\bm{x}_2\cdots d\bm{x}_t}{V_2^{t}},
$$
the right hand side of inequality (\ref{gailv}). Let $\bm{y}_i=(R_1/R_2)\bm{x}_i, i=1, \ldots, t$. Then
$$
\frac{\int_{B^n_{p, \bm{k}}(R_2)^t}\textbf{1}_{\mathcal{D}_{p, \bm{k}}(\bm{x}_1, \bm{x}_2, \ldots, \bm{x}_t)}d\bm{x}_1d\bm{x}_2\cdots d\bm{x}_t}{V_2^{t}}=\frac{\int_{B^n_{p, \bm{k}}(R_1)^t}\textbf{1}_{\mathcal{D}'_{p, \bm{k}}(\bm{y}_1, \bm{y}_2, \ldots, \bm{y}_t)}d\bm{y}_1d\bm{y}_2\cdots d\bm{y}_t}{V_1^{t}},
$$
where $\mathcal{D}'_{p, \bm{k}}(\bm{y}_1, \bm{y}_2, \ldots, \bm{y}_t)$ means the event that $d_{p, \bm{k}, n}(\bm{y}_i, \bm{y}_j)\geq\frac{R_1}{R_2}\cdot2r_{p, \bm{k}, n}$ for every $i\neq j$.

The left hand side of inequality (\ref{gailv}) is the probability of the event that $t$ uniformly chosen points in $B^n_{p, \bm{k}}(R_1)$ with pairwise distance larger than or equal to $2r_{p, \bm{k}, n}$, while the right hand side of inequality (\ref{gailv}) is the probability of the event that $t$ uniformly chosen points in $B^n_{p, \bm{k}}(R_1)$ with pairwise distance larger than or equal to $\frac{R_1}{R_2}\cdot2r_{p, \bm{k}, n}$. The latter event is contained in the former event, i.e.
$$
\textbf{1}_{\mathcal{D}_{p, \bm{k}}(\bm{x}_1, \bm{x}_2, \ldots, \bm{x}_t)}\geq\textbf{1}_{\mathcal{D}'_{p, \bm{k}}(\bm{x}_1, \bm{x}_2, \ldots, \bm{x}_t)}.
$$
Thus inequality (\ref{gailv}) holds and the claim is proved.

Letting $V_1$ go to infinity in both sides of inequality (\ref{alphav}), we obtain that
$$
f_{p, \bm{k}, n}(\alpha_1)>f_{p, \bm{k}, n}(\alpha_2).
$$
This completes the proof.
\end{proof}

Finally, we give a proof of Theorem \ref{fxiajie}.
\begin{proof}[Proof of Theorem \ref{fxiajie}]
Consider packings in $B^n_{p, \bm{k}}(R)$ and $V={\rm vol}(B^n_{p, \bm{k}}(R))=(R/r_{p, \bm{k}, n})^n$. Assume that ${\rm Var}(|X|)\geq V^{3/2}$ for every $\lambda\in[c_p^{-n}, 2c_p^{-n}]$. Using equation (\ref{fangcha}) in the proof of Lemma \ref{dizeng}, we have
\begin{equation*}
\begin{split}
\alpha_{B^n_{p, \bm{k}}(R), p, \bm{k}}(2c_p^{-n})=&\int_0^{2c_p^{-n}}\alpha'_{B^n_{p, \bm{k}}(R), p, \bm{k}}(x)dx\\
=&\int_0^{2c_p^{-n}}\frac{{\rm Var}(|X|)}{xV}dx\\
\geq&\int_{c_p^{-n}}^{2c_p^{-n}}\frac{{\rm Var}(|X|)}{xV}dx\\
\geq&V^{1/2}(\log 2-1)>1,
\end{split}
\end{equation*}
a contradiction.
So there exists $\lambda\in[c_p^{-n}, 2c_p^{-n}]$ such that ${\rm Var}(|X|)<V^{3/2}$.

Choose $\lambda\in[c_p^{-n}, 2c_p^{-n}]$ such that ${\rm Var}(|X|)<V^{3/2}$. Using Chebyshev's inequality, we have
$$
\mathbb{P}_{B^n_{p, \bm{k}}(R), p, \bm{k}, \lambda}\left(\left||X|-\mathbb{E}_{B^n_{p, \bm{k}}(R), p, \bm{k}, \lambda}(|X|)\right|\geq V^{4/5}\right)\leq\frac{{\rm Var}(|X|)}{V^{8/5}}<V^{-1/10}.
$$
So
$$
\mathbb{P}_{B^n_{p, \bm{k}}(R), p, \bm{k}, \lambda}\left(\left||X|-\mathbb{E}_{B^n_{p, \bm{k}}(R), p, \bm{k}, \lambda}(|X|)\right|\leq V^{4/5}\right)\geq1-V^{-1/10}.
$$
Note that $|X|$ is an integer. By averaging, there exists an integer
$$
t\in\left[\mathbb{E}_{B^n_{p, \bm{k}}(R), p, \bm{k}, \lambda}(|X|)-V^{4/5}, \mathbb{E}_{B^n_{p, \bm{k}}(R), p, \bm{k}, \lambda}(|X|)+V^{4/5}\right]
$$
such that
$$
\mathbb{P}_{B^n_{p, \bm{k}}(R), p, \bm{k}, \lambda}\left(|X|=t\right)\geq\frac{1-V^{-1/10}}{\lfloor2V^{4/5}\rfloor}\geq \frac1V,
$$
for large $V$. Recall that
$$
\mathbb{P}_{B^n_{p, \bm{k}}(R), p, \bm{k}, \lambda}\left(|X|=t\right)=\frac{\lambda^t\hat{Z}_{B^n_{p, \bm{k}}(R), p, \bm{k}}(t)}{Z_{B^n_{p, \bm{k}}(R), p, \bm{k}}(\lambda)}.
$$
So
$$
\hat{Z}_{B^n_{p, \bm{k}}(R), p, \bm{k}}(t)\geq\frac{1}{V\lambda^t}Z_{B^n_{p, \bm{k}}(R), p, \bm{k}}(\lambda)\geq\frac{1}{V\lambda^t}.
$$

On the other hand, by Theorem \ref{main} and the definition of $\alpha_{B^n_{p, \bm{k}}(R), p, \bm{k}}(\lambda)$, we have
$$
\mathbb{E}_{B^n_{p, \bm{k}}(R), p, \bm{k}, \lambda}(|X|)=\alpha_{B^n_{p, \bm{k}}(R), p, \bm{k}}(\lambda)\cdot V\geq(1+o_n(1))\frac{\log(2/c_p)\cdot n}{2^n}\cdot V.
$$
Let $\epsilon$ be a small positive number (we will choose $\epsilon$ dependent on $n$ but independent of $V$) and $\alpha=(1+o_{n}(1))\frac{\log(2/c_p)\cdot n}{2^n}-\epsilon$.
So for large $V$,
$$
\alpha\leq(1+o_{n}(1))\frac{\log(2/c_p)\cdot n}{2^n}-V^{-1/5}\leq\frac tV.
$$
Applying Lemma \ref{dijian}, it follows that
$$
f_{p, \bm{k}, n}(\alpha)=\lim_{V\rightarrow\infty}\frac{1}{\alpha V}\log\frac{\hat{Z}_{B^n_{p, \bm{k}}(R), p, \bm{k}}(\lfloor\alpha V\rfloor)}{V^{\lfloor\alpha V\rfloor}/\lfloor\alpha V\rfloor!}\geq\lim_{V\rightarrow\infty}\frac{1}{t}\log\frac{\hat{Z}_{B^n_{p, \bm{k}}(R), p, \bm{k}}(t)}{V^{t}/t!}.
$$
We calculate
\begin{equation*}
\begin{split}
\lim_{V\rightarrow\infty}\frac{1}{t}\log\frac{\hat{Z}_{B^n_{p, \bm{k}}(R), p, \bm{k}}(t)}{V^{t}/t!}\geq&\lim_{V\rightarrow\infty}\frac{1}{t}\log\frac{t!}{V^{t+1}\lambda^t}\\
=&\lim_{V\rightarrow\infty}\frac{1}{t}\log\frac{t!}{V^{t+1}}-\log\lambda\\
\geq&\lim_{V\rightarrow\infty}\frac{1}{t}\log\frac{t!}{V^{t+1}}+n\log c_p-\log2.
\end{split}
\end{equation*}
Note that $t\rightarrow\infty$ when $V\rightarrow\infty$. Hence Stirling's formula implies that
\begin{equation*}
\begin{split}
\lim_{V\rightarrow\infty}\frac{1}{t}\log\frac{t!}{V^{t+1}}=&\lim_{V\rightarrow\infty}\frac{1}{t}\log\frac{\sqrt{2\pi t}(t/e)^t}{V^{t+1}}\\
=&\lim_{V\rightarrow\infty}\left(\frac{1}{t}\log\sqrt{2\pi t}+\log(t/V)-1-\frac{1}{t}\log(1/V)\right)\\
=&\lim_{V\rightarrow\infty}\log(t/V)-1\\
\geq&\log\left((1+o_{n}(1))\frac{\log(2/c_p)\cdot n}{2^n}\right)-1\\
=&(-\log2+o_n(1))n.
\end{split}
\end{equation*}
Therefore,
$$
f_{p, \bm{k}, n}(\alpha)\geq(-\log2+o_n(1))n+n\log c_p-\log2=-(1+o_n(1))\log(2/c_p)\cdot n.
$$
Choose $\epsilon=o_n\left(\frac{\log(2/c_p)\cdot n}{2^n}\right)$ so that $\alpha=(1+o_{n}(1))\frac{\log(2/c_p)\cdot n}{2^n}-\epsilon=(1+o_{n}(1))\frac{\log(2/c_p)\cdot n}{2^n}$, completing the proof.
\end{proof}

\section*{Acknowledgements}
The authors would like to thank Professor Yufei Zhao and Professor Chuanming Zong for many helpful and valuable comments on the manuscript. The authors also thank Professor Hong Liu for sharing his nice idea about the alternate proof.

\bibliographystyle{abbrv}
\bibliography{sphere_packing_REF}

\begin{thebibliography}{10}

\bibitem{MR1191572}
K.~Ball.
\newblock A lower bound for the optimal density of lattice packings.
\newblock {\em Internat. Math. Res. Notices}, (10):217--221, 1992.

\bibitem{MR1501880}
J.~A. Clarkson.
\newblock Uniformly convex spaces.
\newblock {\em Trans. Amer. Math. Soc.}, 40(3):396--414, 1936.

\bibitem{MR1973059}
H.~Cohn and N.~Elkies.
\newblock New upper bounds on sphere packings. {I}.
\newblock {\em Ann. of Math. (2)}, 157(2):689--714, 2003.

\bibitem{MR3664817}
H.~Cohn, A.~Kumar, S.~D. Miller, D.~Radchenko, and M.~Viazovska.
\newblock The sphere packing problem in dimension 24.
\newblock {\em Ann. of Math. (2)}, 185(3):1017--1033, 2017.

\bibitem{MR3229046}
H.~Cohn and Y.~Zhao.
\newblock Sphere packing bounds via spherical codes.
\newblock {\em Duke Math. J.}, 163(10):1965--2002, 2014.

\bibitem{MR1117154}
N.~D. Elkies, A.~M. Odlyzko, and J.~A. Rush.
\newblock On the packing densities of superballs and other bodies.
\newblock {\em Invent. Math.}, 105(3):613--639, 1991.

\bibitem{2021arXiv211101255G}
I.~G. {Fern{\'a}ndez}, J.~{Kim}, H.~{Liu}, and O.~{Pikhurko}.
\newblock {New lower bounds on kissing numbers and spherical codes in high
  dimensions}.
\newblock {\em arXiv e-prints}, page arXiv:2111.01255, Nov. 2021.

\bibitem{MR2179728}
T.~C. Hales.
\newblock A proof of the {K}epler conjecture.
\newblock {\em Ann. of Math. (2)}, 162(3):1065--1185, 2005.

\bibitem{MR9782}
E.~Hlawka.
\newblock Zur {G}eometrie der {Z}ahlen.
\newblock {\em Math. Z.}, 49:285--312, 1943.

\bibitem{MR3836667}
M.~Jenssen, F.~Joos, and W.~Perkins.
\newblock On kissing numbers and spherical codes in high dimensions.
\newblock {\em Adv. Math.}, 335:307--321, 2018.

\bibitem{MR3898718}
M.~Jenssen, F.~Joos, and W.~Perkins.
\newblock On the hard sphere model and sphere packings in high dimensions.
\newblock {\em Forum Math. Sigma}, 7:Paper No. e1, 19, 2019.

\bibitem{MR0514023}
G.~A. Kabatjanski\u{\i} and V.~I. Leven\v{s}te\u{\i}n.
\newblock Bounds for packings on the sphere and in space.
\newblock {\em Problemy Pereda\v{c}i Informacii}, 14(1):3--25, 1978.

\bibitem{2021arXiv211211274K}
J.~{Kim}, H.~{Liu}, and T.~{Tran}.
\newblock {Exponential decay of intersection volume with applications on
  list-decodability and Gilbert-Varshamov type bound}.
\newblock {\em arXiv e-prints}, page arXiv:2112.11274, Dec. 2021.

\bibitem{MR2384226}
L.~Liu and C.~P. Xing.
\newblock Packing superballs from codes and algebraic curves.
\newblock {\em Acta Math. Sin. (Engl. Ser.)}, 24(1):1--6, 2008.

\bibitem{2021arXiv210915215P}
F.~{Pirot} and E.~{Hurley}.
\newblock {Colouring locally sparse graphs with the first moment method}.
\newblock {\em arXiv e-prints}, page arXiv:2109.15215, Sept. 2021.

\bibitem{MR22863}
C.~A. Rogers.
\newblock Existence theorems in the geometry of numbers.
\newblock {\em Ann. of Math. (2)}, 48:994--1002, 1947.

\bibitem{MR79045}
C.~A. Rogers.
\newblock The number of lattice points in a set.
\newblock {\em Proc. London Math. Soc. (3)}, 6:305--320, 1956.

\bibitem{MR96639}
C.~A. Rogers.
\newblock Lattice covering of space: {T}he {M}inkowski-{H}lawka theorem.
\newblock {\em Proc. London Math. Soc. (3)}, 8:447--465, 1958.

\bibitem{MR1022304}
J.~A. Rush.
\newblock A lower bound on packing density.
\newblock {\em Invent. Math.}, 98(3):499--509, 1989.

\bibitem{MR1147836}
J.~A. Rush.
\newblock Constructive packings of cross polytopes.
\newblock {\em Mathematika}, 38(2):376--380 (1992), 1991.

\bibitem{MR1307294}
J.~A. Rush.
\newblock An indexed set of density bounds on lattice packings.
\newblock {\em Geom. Dedicata}, 53(2):217--221, 1994.

\bibitem{MR1375676}
J.~A. Rush.
\newblock Lattice packing of nearly-{E}uclidean balls in spaces of even
  dimension.
\newblock {\em Proc. Edinburgh Math. Soc. (2)}, 39(1):163--169, 1996.

\bibitem{MR908835}
J.~A. Rush and N.~J.~A. Sloane.
\newblock An improvement to the {M}inkowski-{H}lawka bound for packing
  superballs.
\newblock {\em Mathematika}, 34(1):8--18, 1987.

\bibitem{MR4064778}
A.~Sah, M.~Sawhney, D.~Stoner, and Y.~Zhao.
\newblock Exponential improvements for superball packing upper bounds.
\newblock {\em Adv. Math.}, 365:107056, 9, 2020.

\bibitem{2020arXiv200100185S}
N.~T. {Sardari} and M.~{Zargar}.
\newblock {New upper bounds for spherical codes and packings}.
\newblock {\em arXiv e-prints}, page arXiv:2001.00185, Jan. 2020.

\bibitem{MR96638}
W.~M. Schmidt.
\newblock The measure of the set of admissible lattices.
\newblock {\em Proc. Amer. Math. Soc.}, 9:390--403, 1958.

\bibitem{MR114809}
W.~M. Schmidt.
\newblock Masstheorie in der {G}eometrie der {Z}ahlen.
\newblock {\em Acta Math.}, 102:159--224, 1959.

\bibitem{MR146149}
W.~M. Schmidt.
\newblock On the {M}inkowski-{H}lawka theorem.
\newblock {\em Illinois J. Math.}, 7:18--23, 1963.

\bibitem{van1936anwendung}
J.~van~der Corput and G.~Schaake.
\newblock Anwendung einer blichfeldtschen beweismethode in der geometrie der
  zahlen.
\newblock {\em Acta Arithmetica}, 2:152--160, 1936.

\bibitem{MR3044452}
A.~Venkatesh.
\newblock A note on sphere packings in high dimension.
\newblock {\em Int. Math. Res. Not. IMRN}, (7):1628--1642, 2013.

\bibitem{MR3664816}
M.~S. Viazovska.
\newblock The sphere packing problem in dimension 8.
\newblock {\em Ann. of Math. (2)}, 185(3):991--1015, 2017.

\end{thebibliography}
\end{document}